\newtheorem{thm}{Theorem}[section]
\newtheorem{prop}[thm]{Proposition}
\newtheorem{cor}[thm]{Corollary}
\newtheorem{lem}[thm]{Lemma}
\newtheorem*{thmnn}{Theorem}
\theoremstyle{remark}
\begin{document}
\title{On Spaces Associated with Invariant Divisors on Galois Covers of Riemann Surfaces and Their Applications}

\author{Yaacov Kopeliovich, Shaul Zemel}

\maketitle

\begin{abstract}
Let $f:X \to S$ be a Galois cover of Riemann surfaces, with Galois group $G$. In this paper we analyze the $G$-invariant divisors on $X$, and their associated spaces of meromorphic functions, differentials, and $q$-differentials. We generalize the trace formula for non-trivial elements of $G$ on $q$-differentials, as well as the Chevalley--Weil Formula. When $G$ is Abelian or when the genus of $S$ is 0 we prove additional results, and we also determine the non-special $G$-invariant divisors when both conditions are satisfied.
\end{abstract}

\section*{Introduction}

\smallskip

This paper grew out of an attempt to generalize Thomae's formula to general Abelian covers of the Riemann sphere $\mathbb{P}^{1}(\mathbb{C})$. Such formulae were obtained for fully ramified cyclic covers using the Szeg\H{o} kernel function in \cite{[K1]}, and in \cite{[FZ]} and \cite{[Z]} using more elementary methods. These formulae are relations between theta constants on an appropriate cover of $\mathbb{P}^{1}(\mathbb{C})$ and algebraic parameters defining the cover (i.e., the branching values). Recently \cite{[K2]} managed to extend the Szeg\H{o} kernel construction to non-cyclic 2-covers of $\mathbb{P}^{1}(\mathbb{C})$.

Given an Abelian cover $f:X\to\mathbb{P}^{1}(\mathbb{C})$, the first stage in stating Thomae formulae is finding the non-special $G$-invariant divisors of degree $g_{X}$ on $X$. We achieve this goal fully in this paper, using the methods from \cite{[FZ]} and \cite{[Z]}. On the other hand, \cite{[K1]} and \cite{[K2]} use non-integral $G$-invariant divisors of degree $g_{X}-1$ with a similar non-specialty criterion (these divisors appear implicitly in \cite{[FZ]} and \cite{[Z]} as well). Using the same methods, we determine all the divisors of this sort on general Abelian covers of $\mathbb{P}^{1}(\mathbb{C})$ as well. In the sequel \cite{[KZ]} to the current paper we could show how to apply the Szeg\H{o} kernel techniques, combined with these results about non-special divisors, for indeed stating and proving Thomae formulae for any Abelian cover of $\mathbb{P}^{1}(\mathbb{C})$.

\smallskip

Most of the results of this paper concern general Galois covers of Riemann surfaces. When $f:X \to S$ is a Galois cover with Galois group $G$, we first define a refinement of the classical signature of the cover, associating to each branching image $\eta \in S$ a conjugacy class in $G$ that arises from the monodrony of $f$ around $\eta$. Our method of investigating $G$-invariant divisors on $X$ is then based on a certain normalization, for which one has to choose an arbitrary point in $S$ that is not a branching image. The normalization is with respect to an equivalence relation between $G$-invariant divisors that is finer than linear equivalence, a relation that we call invariant linear equivalence. Two divisors are called \emph{invariantly linearly equivalent} if their difference is the divisor of a \emph{$G$-invariant} meromorphic function on $X$. Using this normalization we are also able to generalize some classical results about the action of $G$ on spaces of $q$-differentials. Explicitly, we extend the validity of two classical results. One is the Chevalley--Weil Formula from \cite{[CW]} and \cite{[W]} for the multiplicity of an irreducible representation of $G$ on the space of $q$-differentials. The other one is the formula, called the Eichler Trace Formula in Chapter 5 of \cite{[FK]}, for the trace of a non-trivial automorphism of finite order of $X$ on that space. Our generalization involves the spaces of $q$-differentials that are associated with the pullback of a positive divisor on $S$. We mention that \cite{[JK]} investigates similar questions as well, but only for functions (i.e., with $q=0$), and under the assumption that the divisor is non-special and that all the irreducible complex representations of $G$ can be defined over $\mathbb{Q}$. The earlier paper \cite{[EL]} has also established results of this type, but only for the Euler characteristic of the associated line bundles (or vector bundles), and under some reductivity assumptions. Certain special cases are also treated in the references cited in \cite{[JK]}, as well as in \cite{[R]} mentioned below and the references therein, while the case of function spaces associated with pullbacks of divisors of large degrees on $S$ is dealt with in \cite{[VL]}.

The way we prove these results is as follows. We deduce the generalized Chevalley--Weil Formula for characters (i.e., 1-dimensional representations) directly from our normalization method. As any automorphism of $X$ lies in the cyclic group that it generates, this suffices for establishing the generalized Eichler Trace Formula. The complete generalized Chevalley--Weil Formula then follows from basic representation theory. Note that our proof  is straightforward, using only classical results like the Riemann--Roch Theorem and character theory, without the infinite covers of \cite{[CW]} or any other complicated objects.

Finally, we recall that \cite{[LR]} establishes a decomposition of $J(X)$ according to the rational irreducible representations of $G$, while \cite{[R]} evaluates the dimensions of the resulting components. We show how the results of \cite{[R]} (as well as some of those of the previous reference) follow from the (classical) Chevalley--Weil Formula, and relate certain parts of this decomposition to quotients of $X$ that are cyclic over $S$. This is of interest, since these parts can be easily described using equations over $\mathbb{C}(S)$. Our description is based on a refinement of the Prym varieties from \cite{[LR]} (which generalize vastly the classical notion of Prym varieties), which we define for cyclic covers and call primitive Prym varieties.

\smallskip

In order to present the results of this paper in more detail we shall need some notation. Given a non-trivial conjugacy class $C$ in $G$, there is a finite number of points $\eta \in S$ such that the local monodromy action induced by $f:X \to S$ belongs to the conjugacy class $C$. We denote the number of such points by $r_{C}$, and the order in $G$ of elements in $C$ by $o(C)$. For any character $\chi$ from $G$ to $S^{1}=\big\{z\in\mathbb{C}\big||z|=1\big\}$ and any conjugacy class $C$ we set $u_{\chi,C}$ to be the minimal non-negative exponent $u$ such that $\chi(\sigma)=e^{2\pi iu/o(C)}$ for $\sigma \in C$, and let $t_{\chi}=\sum_{C \neq Id_{X}}\frac{r_{C}u_{\chi,C}}{o(C)}$. Fixing a base point $\nu \in S$, a normalized $G$-invariant divisor on the Riemann surface $X$ can then be written in terms of the pullback of a non-special positive divisor on $S$, an arbitrary multiple of $f^{*}\nu$, and a partition of the $r_{C}$ branching images associated with $C$ into $o(C)$ sets $B_{C,i}$, $0 \leq i<o(C)$. The first result of this paper (see Theorems \ref{intnonsp} and \ref{eqnointg-1}) determines the divisors that are required for the Thomae formulae from \cite{[KZ]}, in which the Galois group $G$ is Abelian and $S=\mathbb{P}^{1}(\mathbb{C})$. In this case the conjugacy classes are just elements of $G$, there are no non-trivial positive divisors that are non-special, the base point $\nu$ is $\infty$, and the divisors are as follows.
\begin{thmnn}
Let $\Delta$ be a normalized $G$-invariant positive divisor on $X$. Then $\Delta$ is non-special of degree $g_{X}$ if and only if the multiple of $f^{*}\nu$ is 0 and the sets $B_{\sigma,i}$ with non-trivial $\sigma \in G$ and $0 \leq i<o(\sigma)$ satisfy the condition $\sum_{\sigma \neq Id_{X}}\sum_{i=0}^{u_{\chi,\sigma}-1}|B_{\sigma,i}|=t_{\chi}-1$ for every non-trivial character $\chi$ of $G$. On the other hand, a divisor $\Delta$ on $X$ is normalized, $G$-invariant, of degree $g_{X}-1$, and is not linearly equivalent to any positive divisor, if and only if $f^{*}\nu$ appears with the multiplicity $-1$ and the equality $\sum_{\sigma \neq Id_{X}}\sum_{i=0}^{u_{\chi,\sigma}-1}|B_{\sigma,i}|=t_{\chi}$ holds for every character $\chi$ of $G$.
\end{thmnn}

We now describe our generalization of the Chevalley--Weil Formula. Recall that for every conjugacy class $C$ and every irreducible representation $\rho$ of $G$, of dimension $d_{\rho}$, the representation space of $\rho$ decomposes into $o(C)$ possible eigenspaces corresponding to the eigenvalues of $\rho(\sigma)$ for $\sigma \in C$. For $\alpha\in\mathbb{Z}/o(C)\mathbb{Z}$ we denote by $N_{C,\alpha}^{\rho}$ the dimension of the $e^{2\pi i\alpha/o(C)}$-eigenspace of $\rho(\sigma)$. Given a divisor $\Gamma$ on $S$ we write $\deg\Gamma$ for the degree of $\Gamma$ and $f^{*}\Gamma$ for the pullback of $\Gamma$ to $X$, and for $q\in\mathbb{Z}$ we denote by $\Omega^{q}(-f^{*}\Gamma)$ the space of $q$-differentials on $X$ whose divisors are bounded from below by $-f^{*}\Gamma$. We also introduce the notation $\{x\}$ for the fractional part of the real number $x$. Our formula, which is proved in Theorem \ref{CWgen}, is the following.
\begin{thmnn}
Take $q\in\mathbb{Z}$ with $(g_{X}-1)(q-1)\geq0$, a positive divisor $\Gamma$ on $S$, and an irreducible representation $\rho$ of $G$. Then there exists $\delta\in\{0,1\}$, which may not vanish only if the conditions that $\Gamma$ is trivial, that $(g_{X}-1)(q-1)=0$, and that $d_{\rho}=1$ are all satisfied, such that the multiplicity of $\rho$ in $\Omega^{q}(-f^{*}\Gamma)$ is \[d_{\rho}[(2q-1)(g_{S}-1)+\deg\Gamma]+\sum_{C \neq Id_{X}}\!r_{C}\sum_{\alpha=0}^{o(C)-1}N_{C,\alpha}^{\rho}\Big[(q-1)\big(1-\tfrac{1}{o(C)}\big)+\big\{\tfrac{q-1-\alpha}{o(C)}\big\}\Big]+\delta.\]
\end{thmnn}

Our investigation of the action of $G$ on the Jacobian $J(X)$ of $X$ also leads to the following interpretation of its decomposition. For every cyclic quotient $Q$ of $G$ there is an associated cyclic cover $Y_{Q}$ of $S$ that is a quotient of $X$, and in Section \ref{JacDecom} we define for each such cover its primitive Prym variety $\widetilde{P}(Y_{Q}/S)$ as the complement in $J(Y_{Q})$ of the images of the Jacobians of all the intermediate Riemann surfaces between $Y_{Q}$ and $S$. Then we get the following result in Theorem \ref{PryminJac}.
\begin{thmnn}
The product $\prod_{Q=G/N\ \mathrm{cyclic}}\widetilde{P}(Y_{Q}/S)$ maps into $J(X)$ with a finite kernel, and in case $G$ is Abelian, this map also surjects. The constituent $\widetilde{P}(Y_{Q}/S)$ associated with $Q$ is non-trivial wherever the genus of $Y_{Q}$ is positive, unless $Y_{Q}$ and $S$ are both of genus 1 and $Q$ is non-trivial.
\end{thmnn}

\smallskip

The paper is divided into 7 sections. Section \ref{GalCov} introduces the finer invariant lying over points of $S$, and presents some explicit formulae for it in the Abelian case. In Section \ref{NormForm} we define the normalized form of $G$-invariant divisors modulo invariant linear equivalence, and prove some properties of the normalized divisors of meromorphic functions on $X$ associated with characters of $G$. Section \ref{LDeltaInv} relates the dimensions of function spaces associated with (normalized) $G$-invariant divisors and characters of $G$ to dimensions of certain spaces of meromorphic function on $S$, and gives explicit expressions for these dimensions when $S$ has genus 0. In Section \ref{AbCovP1C} we give more details about the case of Abelian $G$, and determine the sets of the two types of $G$-invariant non-special divisors on Abelian covers of $\mathbb{P}^{1}(\mathbb{C})$. Section \ref{Diffs} investigates differentials and $q$-differentials on Galois covers, and determines the dimensions of the relevant spaces. For pullbacks of positive divisors on $S$, a deeper analysis of the resulting representations is carried out in Section \ref{RepQDiff}, where the generalizations of the Chevalley--Weil Formula and the Eichler Trace Formula are proved. Finally, Section \ref{JacDecom} presents the decomposition of $J(X)$, as well as the relation between the relevant parts of this decomposition and cyclic covers of $S$ that are quotients of $X$.

\smallskip

We are thankful to H. M. Farkas for many intriguing discussions on the subject of this work, as well as to the anonymous referee for many valuable suggestions, which greatly improved the presentation of this paper.

\section*{List of Notation}

$A_{C,\chi}$, $A_{\sigma,\chi}$ --- The unions $\bigcup_{i=0}^{u_{\chi,C}-1}B_{C,i}$ and $\bigcup_{i=0}^{u_{\chi,\sigma}-1}B_{\sigma,i}$ respectively.


\noindent
$\alpha_{C,\overline{\chi}}^{q}$ --- The integral part $\big\lfloor\frac{q(o(C)-1)-u_{\overline{\chi},C}}{o(C)}\big\rfloor$.

\noindent
$B_{C,i}$ --- The set of branch points with monodromy $C$ whose pre-images appear with multiplicity $o(C)-1-i$ in a divisor $\Delta$.

\noindent
$B_{\sigma,i}$ --- The set $B_{\{\sigma\},i}$ when $G$ is Abelian and $C=\{\sigma\}$.




\noindent
$\beta_{C,\overline{\chi}}^{q}$ --- The residue $o(C)\cdot\big\{\frac{q(o(C)-1)-u_{\overline{\chi},C}}{o(C)}\big\}$.



\noindent
$\mathbb{C}(X)_{\chi}$ --- The space of functions in $\mathbb{C}(X)$ on which $G$ acts via $\chi\in\widehat{G}$.









\noindent
$\delta_{\alpha,\beta}$ --- The Kronecker $\delta$-symbol, which equals 1 when $\alpha=\beta$ and 0 otherwise.










\noindent
$h_{\chi}$ --- A meromorphic function on $X$ on which $G$ acts via $\chi$, whose divisor is normalized.

\noindent
$i^{q}(\Delta)$, $i^{q}(\Gamma)$, $i^{q}_{\chi}(\Delta)$ --- The dimension of the space $\Omega^{q}(\Delta)$, $\Omega^{q}(\Gamma)$, or $\Omega^{q}(-\Delta)_{\chi}$.

\noindent
$i(\Delta)$, $i(\Gamma)$, $i_{\chi}(\Delta)$ --- A simpler notation for $i^{1}(\Delta)$, $i^{1}(\Gamma)$, and $i^{1}_{\chi}(\Delta)$ respectively.






\noindent
$L(-\Delta)$ (resp. $L(-\Gamma)$) --- The space of meromorphic functions on $X$ (resp. $S$) whose divisors are bounded from below by $-\Delta$ (resp. $-\Gamma$).

\noindent
$L(-\Delta)_{\chi}$ --- The subspace of $L(-\Delta)$ on which $G$ acts via $\chi$.



\noindent
$N_{C,\alpha}^{\rho}$ --- The dimension of the $\zeta_{o(C)}^{\alpha}$-eigenspace of $\rho(\sigma)$ for $\sigma \in C$.







\noindent
$\psi(\eta)$ --- The conjugacy class of the monodromy at the point $\eta \in S$.




\noindent
$r_{C}$ (resp. $r_{\sigma}$) --- The number of points in $S$ with monodromy belonging to $C$ (resp. with monodromy $\sigma$).

\noindent
$r(-\Delta)$, $r(-\Gamma)$, $r_{\chi}(-\Delta)$ --- The dimension of the space $L(-\Delta)$, $L(-\Gamma)$, or $L(-\Delta)_{\chi}$.






\noindent
$s_{\overline{\chi},q}$ --- The maximal number such that $i^{q}\Big(\Upsilon_{\overline{\chi}}+s_{\overline{\chi},q}\nu-\sum_{C \neq Id_{X}}\sum_{j=1}^{r_{C}}\alpha_{C,\overline{\chi}}^{q}\eta_{C,j}\Big)$ is positive.



\noindent
$t_{\chi}$ --- The expression $\sum_{C \neq Id_{X},}\frac{r_{C}u_{\chi,C}}{o(C)}$ (the sum is over conjugacy classes).

\noindent
$u_{\chi,C}$ --- The minimal $0 \leq u\in\mathbb{Z}$ such that $\chi(\sigma)$ with $\sigma \in C$ equals $e^{2\pi iu/o(C)}$.


\noindent
$\Omega^{q}(X)$ (resp. $\Omega^{q}(S)$) --- The space of meromorphic $q$-differentials on the compact Riemann surface $X$ (resp. $S$).

\noindent
$\Omega^{q}(\Delta)$ (resp. $\Omega^{q}(\Gamma)$) --- The space of $q$-differentials on $X$ (resp. $S$) whose divisors are bounded from below by $\Delta$ (resp. $\Gamma$).

\noindent
$\Omega^{q}(\Delta)_{\chi}$ --- The subspace of $\Omega^{q}(\Delta)$ on which $G$ acts via $\chi$.


\noindent
$\omega_{\chi,q}$ --- A meromorphic $q$-differential on $X$ on which $G$ acts by $\chi$, whose divisor is normalized.

\noindent
$\omega_{\chi}$ --- A shorthand for $\omega_{\chi,1}$.

\noindent
$\varpi_{\chi,q}$ --- A $q$-differential generating $\Omega^{q}\Big(\Upsilon_{\overline{\chi}}+s_{\overline{\chi},q}\nu-\sum_{C \neq Id_{X}}\sum_{j=1}^{r_{C}}\alpha_{C,\overline{\chi}}^{q}\eta_{C,j}\Big)$.

\noindent
$\varpi_{\chi}$ --- A shorthand for $\varpi_{\chi,1}$.




\noindent
$\lfloor x \rfloor$ --- The integral part of the real number $x$, namely $\max\{n\in\mathbb{Z}|n \leq x\}$.

\noindent
$\{x\}$ --- The fractional part $x-\lfloor x \rfloor$ of the real number $x$.



\noindent
$\Upsilon_{\chi}$ --- The divisor on $S$ appearing in the divisor of $h_{\chi}$.

\noindent
$\widetilde{\Upsilon}_{\chi,q}$ --- The divisor on $S$ appearing in the divisor of $\varpi_{\chi,q}$.

\noindent
$\widetilde{\Upsilon}_{\chi}$ --- A shorthand for $\widetilde{\Upsilon}_{\chi,1}$.






\section{Galois Covers of Compact Riemann Surfaces \label{GalCov}}

Given a non-trivial map $f:X \to S$ between Riemann surfaces and a point $P \in X$, we denote by $b_{P}$ the branching number of $f$ at $P$. As in \cite{[FK]}, \cite{[FZ]}, \cite{[K1]}, \cite{[K2]}, \cite{[Z]}, and others, we say that $P$ is a \emph{branch point} of $f$ if $b_{P}>0$, and we call $f(P) \in S$ a \emph{branching image}. We shall consider in this paper only maps $f:X \to S$ that are \emph{Galois}, i.e., in which the (finite) group $G$ of automorphisms of $X$ that commute with $f$ operates transitively on each fiber of $f$. Then $G$ is the \emph{Galois group} of $f$, and by denoting by $\mathbb{C}(X)$ (resp. $\mathbb{C}(S)$) the field of meromorphic functions on $X$ (resp. $S$), the group $G$ is also the Galois group of the field extension $\mathbb{C}(X)/\mathbb{C}(S)$ via the embedding $f^{*}$. We recall from the correspondence between compact Riemann surfaces and fields of transcendence degree 1 over $\mathbb{C}$ that every finite subgroup $G$ of the group $Aut(X)$ of automorphisms of $X$ is the Galois group of such a map $f:X \to S$, where $S$ is the compact Riemann surface associated with the subfield of $\mathbb{C}(X)$ that is fixed by $G$. We may thus use the settings of ``$f:X \to S$ Galois'' or ``finite subgroup $G$ of $Aut(X)$'' interchangeably. In addition, for every field $\mathbb{F}$ we shall denote its multiplicative group by $\mathbb{F}^{\times}$.

\smallskip

For a point $\eta \in S$, the stabilizers of the pre-images of $\eta$ in $X$ are conjugate cyclic subgroups. Gathering these conjugacy classes of subgroups that are non-trivial yields, with the genus of $S$, the object that is called in \cite{[R]} and others the \emph{signature} of $f:X \to S$. However, $f$ yields a finer invariant at $\eta$, the existence of which already appears in, e.g., Proposition 2.6 of \cite{[V]} (at least in case $S$ is the \emph{Riemann sphere} $\mathbb{P}^{1}(\mathbb{C})=\mathbb{C}\cup\{\infty\}$), using the local rings of points in $S$ and in $X$. We now present this invariant from a more geometric point of view, which we include here also for setting up notation for the rest of the paper.
\begin{prop}
Lifting small simple positively oriented closed paths around elements of $S$ defines a map $\psi$ from $S$ to conjugacy classes of elements of $G$. Moreover, the order of any element in the class $\psi(\eta)$ for $\eta \in S$ is $b_{P}+1$ for any point $P \in X$ with $f(P)=\eta$. \label{StoGbr}
\end{prop}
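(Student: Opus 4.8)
The plan is to work away from the branch locus with covering-space theory, and to use the local normal form $z\mapsto z^{e}$ of a holomorphic map. Let $B\subset S$ be the finite set of images of branch points of $f$, and set $S'=S\setminus B$, $X'=f^{-1}(S')$. Since $X$ is compact and connected, $X'$ is connected; since $G$ acts transitively (as $f$ is Galois) and freely (as recalled in the excerpt) on every fiber of $f$ lying over $S'$, the restriction $f\colon X'\to S'$ is a connected regular topological covering of degree $n=|G|$ with deck group $G$. Fixing $\eta_{0}\in S'$ and $P_{0}\in f^{-1}(\eta_{0})$, lifting loops gives the monodromy homomorphism $\rho\colon\pi_{1}(S',\eta_{0})\to G$ characterized by $\widetilde{\gamma}(1)=\rho([\gamma])\,P_{0}$, where $\widetilde{\gamma}$ is the lift of $\gamma$ starting at $P_{0}$; replacing $P_{0}$ by $kP_{0}$ replaces $\rho$ by $k\,\rho(\cdot)\,k^{-1}$. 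For $\eta\in S$ I would define $\psi(\eta)$ to be the conjugacy class of $\rho([\gamma_{\eta}])$, where $\gamma_{\eta}$ is a small simple positively oriented loop encircling $\eta$ and no other point of $B$ (transported to $\eta_{0}$ by an auxiliary path). For $\eta\notin B$ such a loop bounds a disc in $S'$, so $\psi(\eta)$ is the trivial class, matching $b_{P}+1=1$ there.

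The first point to verify is that $\psi(\eta)$ is independent of the choices involved. Independence of $P_{0}$ in its fiber is the conjugation formula above, and independence of $\eta_{0}$ and of the auxiliary path amounts to the usual change-of-base-point isomorphisms of $\pi_{1}$, which again only conjugate. For independence of the loop I would take a coordinate disc $D\ni\eta$ with $D\cap B=\{\eta\}$, so that $\pi_{1}(D^{*})\cong\mathbb{Z}$ for $D^{*}=D\setminus\{\eta\}$; after an isotopy supported in $D^{*}$ any small simple positively oriented loop around $\eta$ represents the positive generator, hence any two such loops are freely homotopic in $S'$, and freely homotopic loops have conjugate monodromy. This is exactly where the hypothesis ``positively oriented'' is used: without it one would only get the unordered pair $\{\rho([\gamma_{\eta}]),\ \rho([\gamma_{\eta}])^{-1}\}$. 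Thus $\psi$ is a well-defined map from $S$ to the set of conjugacy classes of $G$.

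For the order statement, fix $\eta$ and shrink $D$ so that $f^{-1}(D)$ is a disjoint union of discs, the one containing a given $P_{j}\in f^{-1}(\eta)$ mapping to $D$ by $z\mapsto z^{e_{j}}$ in suitable coordinates, with $e_{j}=b_{P_{j}}+1$. Put $\eta_{0}\in D^{*}$, let $\gamma_{\eta}$ be the positive generator of $\pi_{1}(D^{*},\eta_{0})$, and let $h_{j}=\rho([\gamma_{\eta}])$ be computed with $P_{0}$ chosen in the $j$-th disc. Lifting the $k$-fold concatenation of $\gamma_{\eta}$ from $P_{0}$ yields, by $G$-equivariance of lifts, a path from $P_{0}$ to $h_{j}^{k}P_{0}$; in the model $z\mapsto z^{e_{j}}$ this lift is $t\mapsto P_{0}\,e^{2\pi i k t/e_{j}}$, so $h_{j}^{k}P_{0}=P_{0}$ precisely when $e_{j}\mid k$. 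Since an element of $G$ fixing the non-branch point $P_{0}$ must be the identity (freeness of the action on $f^{-1}(\eta_{0})$), the order of $h_{j}$ equals $e_{j}$. But all the $h_{j}$ represent the single conjugacy class $\psi(\eta)$ by the well-definedness just proved, hence have the same order; so the $e_{j}$ all coincide — recovering the fact, noted before the proposition, that the pre-images of $\eta$ share a branching number — and that common value $b_{P}+1$ is the order of every element of $\psi(\eta)$, as desired.

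As for difficulty: nothing here is deep, the geometric input being only the local form $z\mapsto z^{e}$ of a holomorphic map together with elementary monodromy. The one thing requiring genuine care is the bookkeeping that makes $\psi$ a well-defined map to conjugacy classes, namely tracking how the choices of fiber base point, of auxiliary path to $\eta_{0}$, and of orientation affect $\rho([\gamma_{\eta}])$, and checking that each such change is a conjugation. This is also why the statement deserves a careful proof: later sections use both $\psi$ and the notation set up here.
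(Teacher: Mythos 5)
Your proof is correct and follows essentially the same route as the paper's: lift a small positively oriented loop to obtain a deck transformation well-defined up to conjugacy, then use the local normal form $t\mapsto t^{b_{P}+1}$ to identify the order. Your write-up is merely more explicit about the monodromy homomorphism and the well-definedness checks, which the paper treats more informally.
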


\begin{proof}
Fix some $\eta \in S$, and let $\gamma:[0,1] \to S$ be a simple smooth positively oriented closed path that contains $\eta$ in its interior. We assume that no branching image in $S$ that is distinct from $\eta$ is in the closure of the interior of $\gamma$. Consider a continuous map $\tilde{\gamma}:[0,1] \to X$ with $f\circ\tilde{\gamma}=\gamma$ (i.e., a \emph{lift} of $\gamma$ to $X$). Such a lift is determined by the starting point $\tilde{\gamma}(0)$. As $\tilde{\gamma}(1)$ also lies over $\gamma(1)=\gamma(0)$ and $f$ is Galois, there exists a unique $\sigma \in G$ such that $\tilde{\gamma}(1)=\sigma\big(\tilde{\gamma}(0)\big)$ (the uniqueness follows from the assumption that $\tilde{\gamma}(0)$ cannot be a branch point). It is clear that $\sigma$ does not change by continuous deformations of $\gamma$, with its chosen lift $\tilde{\gamma}$, as long as the deformation does not go over a branching image. However, we may change the choice of the starting point $\tilde{\gamma}(0)$ to another pre-image of $\gamma(0)$, which is $\tau\big(\tilde{\gamma}(0)\big)$ for a unique $\tau \in G$. As doing so would replace $\sigma$ by its conjugate $\tau\sigma\tau^{-1}$, the conjugacy class of $\sigma$ is a well-defined object depending only on $\eta$.

It remains to determine the order of $\sigma$. Take $P \in X$ with $f(P)=\eta$, choose coordinates around $P$ and $\eta$ in which $f$ becomes $t \mapsto t^{b_{P}+1}$, and take $\gamma$ to be a small circle around $\eta$ in this coordinate. Then a lift $\tilde{\gamma}$ becomes a $(b_{P}+1)$st root of the circle $\gamma$, which is $\frac{1}{b_{P}+1}$ of a circle. Applying this operation again, but starting with $\tilde{\gamma}(1)=\sigma\big(\tilde{\gamma}(0)\big)$, we get again the element $\sigma\sigma\sigma^{-1}=\sigma$ of $G$, and the new end-point is the image of $\tilde{\gamma}(0)$ under $\sigma^{2}$, after covering $\frac{2}{b_{P}+1}$ of a circle. It follows that for any integer $k$, the $k$th power of $\sigma$ is trivial in $G$ if and only if $k$ iterations of our operation complete an integral multiple of a circle (so that the end point of the combined lifts coincides with the starting point). The order of $\sigma$ (hence of any other element in its conjugacy class) is therefore $b_{P}+1$. This completes the proof of the proposition.
\end{proof}
The signature from \cite{[R]} includes, apart from the genus of $S$, precisely the conjugacy classes of the cyclic subgroups of $G$ generated by representatives for the conjugacy classes $\psi(\eta)$ for those $\eta \in S$ for which $\psi(\eta)$ is non-trivial (i.e., for branching images $\eta$).

The symbol $C$ will henceforth always stand for a conjugacy class in the group $G$ (also as a summation index). We shall denote by $o(C)$ the order $o(\sigma)$ of one (hence any) element $\sigma \in C$. We shall allow ourselves the abuse of notation for writing just $Id_{X}$ for the trivial conjugacy class $\{Id_{X}\}$ in $G$ (the trivial element is $Id_{X}$ since this is the identity element of the group $Aut(X)$ by definition). In addition, when the cover $f:X \to S$ is given and $C$ is non-trivial, we denote by $r_{C}$ the (finite) number of points $\eta \in S$ with $\psi(\eta)=C$. Proposition \ref{StoGbr} now combines with the Riemann--Hurwitz formula to produce the following result.
\begin{cor}
When $f:X \to S$ is Galois with Galois group $G$, the genus $g_{X}$ is $1+n(g_{S}-1)+\sum_{C \neq Id_{X}}\frac{nr_{C}}{2o(C)}\big(o(C)-1\big)$. \label{Galgen}
\end{cor}
For the proof, recall from Proposition \ref{StoGbr} that over a point $\eta \in S$ with $\psi(\eta)=C$ there exist $\frac{n}{o(C)}$ pre-images $P \in X$ with $b_{P}=o(C)-1$. In the notation and conventions of \cite{[R]}, Corollary \ref{Galgen} is just Equation (2.1) of that reference.

\smallskip

A particular case in which several results of this paper become simpler and more complete is the case of \emph{Abelian} covers, especially where $S=\mathbb{P}^{1}(\mathbb{C})$. In this case we shall replace every conjugacy class $C$ by the single element $\sigma \in G$ that it contains in all our notation. Recall that presenting a Galois group as a direct product corresponds to considering the cover $f:X \to S$ as a fibered product of Riemann surfaces over $S$. In particular, it follows from the Structure Theorem for Finite Abelian Groups that an Abelian cover $f:X \to S$, with the Galois group $G$ presented as the product of the cyclic groups $H_{l}$, $1 \leq l \leq q$, is the fibered product of $q$ cyclic covers of $S$. Such a cyclic cover, of some degree $m$, can be presented (e.g., by Hilbert's Theorem 90) as the normalization of the subset of $S\times\mathbb{P}^{1}(\mathbb{C})$ cut by the equation $w^{m}=F$ for $w\in\mathbb{P}^{1}(\mathbb{C})$ and $F\in\mathbb{C}(S)^{\times}$. This cover is irreducible if and only if $F$ is not a $d$th power in $\mathbb{C}(S)$ for any divisor $d>1$ of $m$. Equivalently, for any power $e$, the function $F^{e/\gcd\{m,e\}}$ is not a $\frac{m}{\gcd\{m,e\}}$th power in $\mathbb{C}(S)$ unless $m$ divides $e$ (for otherwise $w^{e}$ would be in $f^{*}\mathbb{C}(S)$, a situation that cannot occur if $w$ generates a cyclic cover of $S$ of degree $m$ and $m$ does not divide $e$). Therefore a general Abelian cover $X$ of $S$, with $G$ as above, can be presented as the normalization of the algebraic set
\begin{equation}
\big\{(\eta,w_{1},\ldots,w_{q}) \in S\times\mathbb{P}^{1}(\mathbb{C})^{q}\big|w_{l}^{m_{l}}=F_{l}(\eta),\ 1 \leq l \leq q\big\}, \label{fibprod}
\end{equation}
where $|H_{l}|=m_{l}$ and with $F_{l}\in\mathbb{C}(S)^{\times}$ for every $1 \leq l \leq q$. The irreducibility of the fibered product $X$ is characterized by the following condition. For a $q$-tuple $(e_{l})_{l=1}^{q}$ of integers, set $\beta=\mathrm{lcm}\big\{\frac{m_{l}}{\gcd\{m_{l},e_{l}\}}\big|1 \leq l \leq q\big\}$. Then if $\beta>1$ (which is equivalent to some $e_{l}$ not being divisible by the corresponding $m_{l}$), the function $\prod_{l=1}^{q}F_{l}^{e_{l}\beta/m_{l}}$ is not a $\beta$th power in $\mathbb{C}(S)$ (this condition clearly generalizes the one from the cyclic case, since in this case $\beta=\frac{m_{l}}{\gcd\{m_{l},e\}}$ and $\frac{e_{l}\beta}{m_{l}}=\frac{e_{l}}{\gcd\{m_{l},e\}}$).

\smallskip

We would like to see how the map $\psi$ from Proposition \ref{StoGbr} looks like when $G$ is presented as the product of the cyclic groups $H_{l}$. For any natural number $N$ we denote the primitive $N$th root of unity $e^{2\pi i/N}$ by $\zeta_{N}$. In our presentation of $X$, the group $H_{l}$ is generated by the automorphism $\tau_{l}$ sending the coordinate $w_{l}$ from Equation \eqref{fibprod} to $\zeta_{m_{l}}w_{l}$ and leaving the other coordinates invariant. A general element $\sigma \in G$ has a unique presentation as $\prod_{l=1}^{q}\tau_{l}^{\alpha_{l}}$, where for any $1 \leq l \leq q$ the power $\alpha_{l}$ is taken from $\mathbb{Z}/m_{l}\mathbb{Z}$ and is determined by the equality $\sigma(w_{l})=\zeta_{m_{l}}^{\alpha_{l}}w_{l}$. Note that $\tau_{l}$ depends not only on the structure of $X$ as a fibered product of cyclic covers of $S$, but also on the choice of the generator $w_{l}$. The map $\psi$ from Proposition \ref{StoGbr}, with values in the Abelian group $G$ itself, now takes the following explicit form.
\begin{prop}
For $\eta \in S$ and an index $1 \leq l \leq q$, consider the order $\alpha_{l}=\mathrm{ord}_{\eta}F_{l}$ of the function $F_{l}\in\mathbb{C}(S)^{\times}$ at $\eta$. Then $\psi(\eta)$ equals $\prod_{l=1}^{q}\tau_{l}^{\alpha_{l}}$. \label{psiZn}
\end{prop}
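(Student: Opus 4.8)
The plan is to reduce the statement to the single cyclic curve $T_l$ and then apply the local description of the $\psi$-map from Proposition \ref{StoGbr} directly. First I would observe that $\psi$ is compatible with the fibered product structure: if we lift a small positively oriented loop $\gamma$ around $\eta$ to $X$ and write $\psi_X(\eta)=\prod_{l=1}^q\tau_l^{a_l}$ in the coordinates fixed above, then projecting $X\to T_l$ (which is a morphism commuting with the maps to $S$, sending $\tau_l\mapsto\tau_l$ and $\tau_k\mapsto Id$ for $k\neq l$) shows that the lift of $\gamma$ to $T_l$ ends at $\tau_l^{a_l}$ applied to its start. Hence $a_l$ is exactly the element of $\mathbb{Z}/m_l\mathbb{Z}$ describing the monodromy of the cyclic cover $T_l\to S$ around $\eta$. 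So it suffices to treat $q=1$: show that for a single $Z_{m}$ curve $w^m=F(z)$ (or $w^m=F$ with $F\in\mathbb{C}(S)^\times$), the monodromy around $\eta$ is $\tau^{\alpha}$ with $\alpha=\mathrm{ord}_\eta F$.

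For the single-curve case I would argue locally at $\eta$. Choose a local coordinate $s$ on $S$ centered at $\eta$, so that $F=s^{\alpha}u(s)$ with $u$ holomorphic and nonvanishing near $s=0$; pick a holomorphic branch of $u(s)^{1/m}$ on a small disc. Then locally $w=s^{\alpha/m}u(s)^{1/m}$, and as $s$ traverses the positively oriented loop $s=\varepsilon\mathbf{e}(t)$, $t\in[0,1]$, the factor $u(s)^{1/m}$ returns to its original value while $s^{\alpha/m}$ gets multiplied by $\mathbf{e}(\alpha/m)=\zeta_m^{\alpha}$. Thus analytic continuation of $w$ along $\gamma$ produces $\zeta_m^{\alpha}w$, which is by definition the action of $\tau^{\alpha}$ (recall $\tau:w\mapsto\zeta_m w$ fixing $\mathbb{C}(S)$). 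Since $z$ and all of $\mathbb{C}(S)$ are single-valued along $\gamma$, the automorphism $\sigma\in G$ with $\tilde\gamma(1)=\sigma(\tilde\gamma(0))$ acts on $w$ exactly by this multiplication, hence $\sigma=\tau^{\alpha}$, i.e. $\psi(\eta)=\tau^{\alpha}$. Reassembling the coordinates gives $\psi(\eta)=\prod_{l=1}^q\tau_l^{\alpha_l}$ with $\alpha_l=\mathrm{ord}_\eta F_l$, as claimed.

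I expect the main obstacle to be bookkeeping rather than anything deep: one must make sure the branch of $u(s)^{1/m}$ and the orientation conventions are consistent with the normalization of $\tau_l$ (sending $w_l\mapsto\zeta_{m_l}w_l$) and with the "positively oriented" convention in Proposition \ref{StoGbr}, so that the sign of the exponent $\alpha_l$ comes out right rather than $-\alpha_l$. A secondary point worth a sentence is the compatibility of the monodromy lift under the projection $X\to T_l$: one should note that a lift of $\gamma$ to $X$ maps to a lift of $\gamma$ to $T_l$ under this morphism, and that $G\to H_l$ is the natural projection killing the other factors, so the $l$-th component of $\psi_X(\eta)$ really is computed inside $T_l$. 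One should also check consistency with Proposition \ref{StoGbr}: the order of $\prod_l\tau_l^{\alpha_l}$ is $\mathrm{lcm}_l\{m_l/\gcd(m_l,\alpha_l)\}$, which must equal $b_P+1$; this matches the known ramification of $X$ over $\eta$ and can be cited as a sanity check rather than re-proved.
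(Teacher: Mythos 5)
Your proposal is correct and follows essentially the same route as the paper: the paper also works one generator $w_{l}$ at a time (explicitly allowing the computation to be done on $T_{l}$), writes $w_{l}^{m_{l}}=u^{\alpha_{l}}\phi(u)$ with $\phi(0)\neq0$ in a local coordinate $u$ at $\eta$, chooses a holomorphic branch of the $m_{l}$th root of $\phi$, and observes that traversing the loop $u=\varepsilon\mathbf{e}(t)$ multiplies $w_{l}$ by $\zeta_{m_{l}}^{\alpha_{l}}$, then reassembles over all $l$. Your explicit preliminary reduction to $q=1$ via the projections $X \to T_{l}$ and your closing sanity check on the order of $\prod_{l}\tau_{l}^{\alpha_{l}}$ are harmless organizational additions, not a different argument.
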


\begin{proof}
Take a local coordinate $u$ for $S$ around $\eta$ with $u(\eta)=0$, and let $P \in X$ be a pre-image of $\eta$ in $X$. The equation for $w_{l}$ around $P$ becomes $w_{l}^{m_{l}}=u^{\alpha_{l}}\phi(u)$ with $\phi$ a holomorphic function on a neighborhood of 0 with $\phi(0)\neq0$. Take the path $\gamma:[0,1] \to S$ defined by $\gamma(t)=u^{-1}(\varepsilon e^{2\pi it})$ for small enough $\varepsilon>0$, and consider a lift $\tilde{\gamma}$ of $\gamma$ to $X$ with $\tilde{\gamma}(0)$ lying near $P$. Substituting shows that the composition $w_{l}\circ\tilde{\gamma}$ takes $t$ to $e^{2\pi i\alpha_{l}t/m_{l}}\mu(\varepsilon e^{2\pi it})$, where $\mu$ is a holomorphic function in the neighborhood of 0 that satisfies $\mu(u)^{m_{l}}=\varepsilon^{\alpha_{l}}\phi(u)$. Indeed, we can define $\mu(u)$ as $\varepsilon^{\alpha_{l}/m_{l}}e^{\log\phi(u)/m_{l}}$ for some branch of $\log\phi(u)$, which can be defined holomorphically since $\phi(0)\neq0$, and we choose the branch according the values of $w_{l}$ in the neighborhood of $P$. Since $\mu(\varepsilon e^{2\pi it})$ attains the same value $\mu(\varepsilon)$ for both $t=0$ and $t=1$, we find that the value of $w_{l}$ at $\tilde{\gamma}(1)$ is $\zeta_{m_{l}}^{\alpha_{l}}$ times its value on $\tilde{\gamma}(0)$. Applying this argument for all $1 \leq l \leq q$ identifies $\psi(\eta)$ with the required element of $G$. This proves the proposition.
\end{proof}

\smallskip

When $S$ is the Riemann sphere $\mathbb{P}^{1}(\mathbb{C})$, with a natural coordinate $z$, the map $f$ yields the canonical meromorphic function $f^{*}z$ on $X$. The cyclic covers involved are $Z_{m_{l}}$ curves, and thus one can study meromorphic functions on $X$ in terms of the theory of such curves treated in, e.g., \cite{[FZ]} and \cite{[Z]}. This also puts some of the results of this paper in the context required for the sequel \cite{[KZ]}.

A remark about the comparison with \cite{[Z]} is in order here. Recall that in this reference $w_{l}$ was normalized such that the function $F_{l}$ appearing in the $Z_{m_{l}}$ equation is a monic polynomial in $f^{*}z$ having no roots of order $m_{l}$ or more. We shall not require this property here (though see the remark after Proposition \ref{charZn}). On the other hand, we shall assume that the order of $F_{l}(z)$ at $\infty$ is divisible by $m_{l}$ for every $l$, to avoid branching over $\infty$ (see Propositions \ref{psiZn} and \ref{StoGbr}). This condition is easily obtained by composing $f$ with an automorphism of $\mathbb{P}^{1}(\mathbb{C})$ if necessary. Back to the general setting, we shall later require a point $\nu \in S$ on which we shall concentrate all the non-normalized parts of divisors, and it will be more convenient to assume that $\nu$ is not a branching image. The natural choice in case $S=\mathbb{P}^{1}(\mathbb{C})$ would be $\nu=\infty$, and this is the reason why we are looking for presentations with no branching over that point.

\section{Normalization of Invariant Divisors \label{NormForm}}

Let $S^{1}$ be the \emph{circle group} $\big\{z\in\mathbb{C}\big||z|=1\big\}$, and let $G$ be a finite group. We denote the \emph{dual group} $Hom(G,S^{1})$, elements of which are known as \emph{characters} of $G$, by $\widehat{G}$, and its trivial element by $\mathbf{1}$. The dual group $\widehat{G}$ coincides with that of its Abelianization $G^{ab}=G/[G,G]$, and is therefore isomorphic to $G^{ab}$, though not canonically. Since any element $\chi\in\widehat{G}$ is constant on conjugacy classes in $G$, the value $\chi(C)$ is well-defined for any conjugacy class $C$ in $G$.

Assume now that $G \subseteq Aut(X)$ is finite, and take $\chi\in\widehat{G}$. The set of functions $h\in\mathbb{C}(X)$ satisfying $h\circ\sigma=\chi(\sigma) \cdot h$ for every $\sigma \in G$ is a vector space over $\mathbb{C}(S)$, which we denote by $\mathbb{C}(X)_{\chi}$. The direct sum $\bigoplus_{\chi\in\widehat{G}}\mathbb{C}(X)_{\chi}$ is the space of those functions on which the commutator subgroup $[G,G]$ of $G$ acts trivially, so that it equals all of $\mathbb{C}(X)$ if and only if $G$ is Abelian.

\smallskip

We shall denote the trivial element of the group $\mathrm{Div}(X)$ of divisors on $X$ by $0_{X}$, for distinguishing it from both $0\in\mathbb{Z}\subseteq\mathbb{C}$ and the trivial element $0_{S}$ of $\mathrm{Div}(S)$ for another Riemann surface $S$. We recall that two divisors $\Delta$ and $\Xi$ on $X$ are called \emph{linearly equivalent} if $\Xi-\Delta$ is the divisor $\mathrm{div}(h)$ of some function $h\in\mathbb{C}(X)^{\times}$. Having $G \subseteq Aut(X)$ as part of the structure, we call two divisors \emph{invariantly linearly equivalent} if $\Xi-\Delta$ is the divisor of a \emph{$G$-invariant} function from $\mathbb{C}(X)^{\times}$. On the other hand, the action of $G$ on $\mathrm{Div}(X)$ allows one to consider \emph{$G$-invariant} divisors. The next result compares the notions of linear equivalence and invariant linear equivalence for such divisors.
\begin{lem}
Let $\Delta$ be a $G$-invariant divisor on $X$, and let $\Xi$ be a divisor on $X$ that is linearly equivalent to $\Delta$. Then $\Xi$ is $G$-invariant if and only if the difference $\Xi-\Delta$, which is a principal divisor, is the divisor of a non-zero function that lies in $\mathbb{C}(X)_{\chi}$ for some character $\chi\in\widehat{G}$. \label{invlineq}
\end{lem}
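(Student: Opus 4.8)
The plan is to exploit the fact that $\Delta$ is $G$-invariant to transfer the $G$-action on $X$ to an action on the one-dimensional space $L(\Delta^{-1})$ (or, in additive notation, the Riemann--Roch space associated with $\Delta$) in which the function $h$ with $\mathrm{div}(h) = \Xi/\Delta$ lives. Concretely, write $\Xi = \mathrm{div}(h)\cdot\Delta$ for some $h \in \mathbb{C}(X)^{\times}$, which exists and is unique up to a non-zero scalar because $\Xi \sim \Delta$. For each $\sigma \in G$, apply $\sigma$ to this equation: since $\sigma$ acts on $\mathrm{Div}(X)$ as a group automorphism and $\mathrm{div}(h\circ\sigma^{-1}) = \sigma\big(\mathrm{div}(h)\big)$ (pushing forward a principal divisor gives the divisor of the transported function), we get $\sigma(\Xi) = \mathrm{div}(h\circ\sigma^{-1})\cdot\sigma(\Delta) = \mathrm{div}(h\circ\sigma^{-1})\cdot\Delta$, using $G$-invariance of $\Delta$.

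Now I would argue in both directions. If $\Xi$ is $G$-invariant, then $\sigma(\Xi) = \Xi$ forces $\mathrm{div}(h\circ\sigma^{-1}) = \mathrm{div}(h)$, hence $h\circ\sigma^{-1}$ and $h$ differ by a non-zero constant, say $h\circ\sigma^{-1} = c(\sigma)h$ with $c(\sigma)\in\mathbb{C}^{\times}$; equivalently $h\circ\sigma = c(\sigma)^{-1}h$. The assignment $\sigma \mapsto c(\sigma)^{-1}$ is then checked to be a homomorphism $G \to \mathbb{C}^{\times}$: from $h\circ(\sigma\tau) = (h\circ\sigma)\circ\tau = c(\sigma)^{-1}(h\circ\tau) = c(\sigma)^{-1}c(\tau)^{-1}h$ one reads off multiplicativity. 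A homomorphism from a finite group to $\mathbb{C}^{\times}$ automatically has image in the roots of unity, hence in $S^{1}$, so it is a character $\chi\in\widehat{G}$, and by construction $h \in \mathbb{C}(X)_{\chi}$. Conversely, if $\mathrm{div}(h) = \Xi/\Delta$ with $h\in\mathbb{C}(X)_{\chi}$ for some $\chi$, then $h\circ\sigma^{-1} = \chi(\sigma^{-1})h = \chi(\sigma)^{-1}h$ is a constant multiple of $h$, so $\mathrm{div}(h\circ\sigma^{-1}) = \mathrm{div}(h)$, and the displayed identity $\sigma(\Xi) = \mathrm{div}(h\circ\sigma^{-1})\cdot\Delta = \mathrm{div}(h)\cdot\Delta = \Xi$ shows $\Xi$ is $G$-invariant.

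I do not expect any serious obstacle here; the statement is essentially a bookkeeping lemma. The one point that deserves care is the compatibility $\sigma\big(\mathrm{div}(h)\big) = \mathrm{div}(h\circ\sigma^{-1})$ between the $G$-action on divisors and the $G$-action on functions — i.e., getting the inverse on the right side correct and being consistent about whether $G$ acts on functions by $h \mapsto h\circ\sigma$ or $h\mapsto h\circ\sigma^{-1}$; once a convention is fixed this is immediate from $\mathrm{ord}_{\sigma(P)}(h\circ\sigma^{-1}) = \mathrm{ord}_{P}(h)$. The only mild subtlety is the scalar ambiguity in $h$: different choices of $h$ (hence of the cocycle $c$) differ by a constant, which does not affect $c$ since constants are $G$-invariant, so $\chi$ is genuinely well-defined, though the lemma as stated only asserts existence of such a $\chi$ and does not even need this uniqueness remark.
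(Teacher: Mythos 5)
Your proof is correct and follows essentially the same route as the paper's: both reduce the nontrivial direction to the fact that two nonzero functions with the same divisor differ by a constant, observe that the resulting assignment $\sigma\mapsto c(\sigma)^{-1}$ is a homomorphism, and use finiteness of $G$ to land in the roots of unity, hence in $S^{1}$; the converse direction is the same easy observation in both. The only difference is presentational (you track the action via $h\circ\sigma^{-1}$ on the divisor identity rather than directly on $\mathrm{div}(h)$), which changes nothing of substance.
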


\begin{proof}
One direction is easy: Since for any non-zero element $h\in\mathbb{C}(X)_{\chi}$ the function $h\circ\sigma$ is a multiple of $h$, it has the same divisor as $h$. Hence $\mathrm{div}(h)$ is a $G$-invariant divisor, and adding it to the $G$-invariant divisor $\Delta$ yields a $G$-invariant divisor. The other direction reduces to the statement that any meromorphic function $h\in\mathbb{C}(X)^{\times}$ whose divisor is $G$-invariant must be in $\mathbb{C}(X)_{\chi}$ for some character $\chi$ of $G$. And indeed, since principal divisors on compact Riemann surfaces determine functions up to scalar multiplication, the $G$-invariance of $\mathrm{div}(h)$ implies that for every $\sigma \in G$, the function $h\circ\sigma$ is a scalar multiple $c_{\sigma}h$ of $h$ for some non-zero scalar $c_{\sigma}\in\mathbb{C}$. It is now clear that the map $\chi$ sending $\sigma \in G$ to $c_{\sigma}$ is multiplicative, and as $G$ is finite, the numbers $c_{\sigma}=\chi(\sigma)$ must be roots of unity and hence contained in $S^{1}$. It follows that $\chi$ is in $\widehat{G}$ and $h\in\mathbb{C}(X)_{\chi}$. This completes the proof of the lemma.
\end{proof}

Since invariant linear equivalence is the equivalence relation arising from divisors of non-zero functions from $\mathbb{C}(X)_{\mathbf{1}}$, it follows from Lemma \ref{invlineq} that for any linear equivalence class, the set of $G$-equivariant divisors in that class decomposes (if it is not empty) as $n^{ab}$ invariant linear equivalence classes, where $n^{ab}$ is the order of $G^{ab}$ hence of $\widehat{G}$. In fact, the set of these classes is a free orbit of $\widehat{G}$ (see the proof of Lemma \ref{actCXrho} below).

\smallskip

The following result will be useful for normalizing certain divisors and functions, but it is also interesting in its own right.
\begin{lem}
Take $P \in X$, $\chi\in\widehat{G}$, and $0 \neq h\in\mathbb{C}(X)_{\chi}$, and set $\eta=f(P) \in S$ and $C=\psi(\eta) \subseteq G$, of order $o(C)$. Then $\chi(C)$ equals $\zeta_{o(C)}^{\mathrm{ord}_{P}h}$. \label{charandord}
\end{lem}

\begin{proof}
Consider a small positively oriented closed path $\gamma:[0,1] \to S$ as in Proposition \ref{StoGbr}, and assume that neither a zero nor a pole of $h$ in $X$ maps to the closure of the interior of $\gamma$, except perhaps pre-images of $\eta$. The proof of Proposition \ref{StoGbr} shows that a lift $\tilde{\gamma}$ of $\gamma$ covers $\frac{1}{o(C)}$ of a closed path around $P$, and the concatenation of the images of this lift under the $o(C)$ different powers of the element $\sigma \in C$ arising from the choice of $P \in f^{-1}(\eta)$ yields a closed path around $P$ in $X$. As the only zero or pole of $h$ that is possibly contained in the interior of this path is $P$, we find that $\mathrm{ord}_{P}h$ can be evaluated as $\frac{1}{2\pi i}\sum_{k=0}^{o(C)-1}\int_{\sigma^{k}\circ\tilde{\gamma}}\frac{dh}{h}$.

We now invoke the fact that $h\in\mathbb{C}(X)_{\chi}$. Since composing $h$ with $\sigma^{k}$ multiplies it by the scalar $\chi(\sigma^{k})$ and the quotient $\frac{dh}{h}$ is invariant under this operation, our expression for $\mathrm{ord}_{P}h$ becomes just $\frac{o(C)}{2\pi i}\int_{\tilde{\gamma}}\frac{dh}{h}$. But the latter integral is $\log h\big(\tilde{\gamma}(1)\big)-\log h\big(\tilde{\gamma}(0)\big)$, and the fact that $\tilde{\gamma}(1)=\sigma\big(\tilde{\gamma}(0)\big)$ and $h\in\mathbb{C}(X)_{\chi}$ implies that the latter difference is a logarithm of $\chi(\sigma)$ (or equivalently of $\chi(C)$). In total, $\mathrm{ord}_{P}h$ is $\frac{o(C)}{2\pi i}$ times a logarithm of $\chi(C)$, and exponentiating gives that $\chi(C)$ is $e^{2\pi i\mathrm{ord}_{P}h/o(C)}=\zeta_{o(C)}^{\mathrm{ord}_{P}h}$ as desired. This proves the lemma.
\end{proof}

\smallskip

Choose a point $\nu \in S$, and fix it once and for all. The example that the reader should bear in mind is $S=\mathbb{P}^{1}(\mathbb{C})$ and $\nu=\infty$. We would like to normalize $G$-invariant divisors on $X$, with respect to invariant linear equivalence, at all the points in $X$ that are not pre-images of $\nu$. This is simple if $S$ has genus 0, but in general we shall need a preliminary result, as well as some additional notation. For any divisor $\Delta$ on a compact Riemann surface $X$ we denote by $v_{P}(\Delta)$ the multiplicity in which a point $P \in X$ appears in a divisor $\Delta$ on $X$ (i.e., $v_{P}$ is the \emph{valuation} on $\mathrm{Div}(X)$ and on $\mathbb{C}(X)$ that is associated with $P$), and we recall that a divisor $\Delta$ with $v_{P}(\Delta)\geq0$ for every $P \in X$ is called \emph{positive}, an assertion that we denote by $\Delta\geq0_{X}$. A divisor that is not positive is called \emph{non-positive}. In addition we denote  by $L(-\Delta)$, following \cite{[FK]} and others, the space of functions $\phi\in\mathbb{C}(X)$ that either vanish identically or satisfy $\mathrm{ord}_{P}\phi\geq-v_{P}(\Delta)$ for every $P \in X$, and its (finite) dimension by $r(-\Delta)$.

We now prove the following lemma.
\begin{lem}
Every divisor on $S$ is linearly equivalent to a unique divisor of the form $\Upsilon-t\nu$ with $t\in\mathbb{Z}$, where $\Upsilon$ is a positive divisor on $S$ not containing $\nu$ in its support and such that $r(-\Upsilon)=1$. \label{divSinfty}
\end{lem}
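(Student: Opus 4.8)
The plan is to prove existence and uniqueness separately. The whole argument rests on two classical facts about the compact Riemann surface $S$ (see, e.g., \cite{[FK]}): that $r\big(\tfrac{1}{\Xi}\big)$ depends only on the linear equivalence class of the divisor $\Xi$, and that for any divisor $\Xi$ one has $r\big(\tfrac{1}{\Xi\nu}\big)-r\big(\tfrac{1}{\Xi}\big)\in\{0,1\}$ (the coefficient, in a local coordinate at $\nu$, of the most negative admissible power of that coordinate realizes $L\big(\tfrac{1}{\Xi\nu}\big)/L\big(\tfrac{1}{\Xi}\big)$ as a subspace of $\mathbb{C}$). I will also use the Riemann--Roch estimate $r\big(\tfrac{1}{\Xi}\big)\geq\deg\Xi-g_{S}+1$, so that $r\big(\tfrac{1}{\Xi}\big)\geq1$ whenever $\deg\Xi\geq g_{S}$.

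For existence, fix a divisor $D$ on $S$ and consider the function $t\mapsto a_{t}:=r\big(\tfrac{1}{D\nu^{t}}\big)$ on $\mathbb{Z}$. Since $\deg(D\nu^{t})=\deg D+t$, this degree is negative for $t\ll0$, so $a_{t}=0$ there, and it is at least $g_{S}$ for $t\gg0$, so $a_{t}\geq1$ there. Hence $\{t\in\mathbb{Z}\mid a_{t}\geq1\}$ is non-empty and bounded below, and I let $t$ be its least element. Then $a_{t-1}=0$, and the jump-by-at-most-one property gives $a_{t}\leq a_{t-1}+1=1$, so $a_{t}=1$. Consequently the linear equivalence class of $D\nu^{t}$ contains exactly one integral divisor (these being parametrized by the projectivization of $L\big(\tfrac{1}{D\nu^{t}}\big)$), which I call $\Upsilon$; thus $\Upsilon$ is linearly equivalent to $D\nu^{t}$, equivalently $D$ is linearly equivalent to $\tfrac{\Upsilon}{\nu^{t}}$, and $r\big(\tfrac{1}{\Upsilon}\big)=a_{t}=1$. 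Finally $\nu$ cannot lie in the support of $\Upsilon$: otherwise $\tfrac{\Upsilon}{\nu}$ would be an integral divisor linearly equivalent to $D\nu^{t-1}$, forcing $a_{t-1}\geq1$ and contradicting the minimality of $t$.

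For uniqueness, suppose $\tfrac{\Upsilon_{1}}{\nu^{t_{1}}}$ and $\tfrac{\Upsilon_{2}}{\nu^{t_{2}}}$ are two divisors of the stated form that are linearly equivalent to each other, with $t_{1}\geq t_{2}$ without loss of generality. Then $\Upsilon_{2}\nu^{t_{1}-t_{2}}$ is an integral divisor linearly equivalent to $\Upsilon_{1}$. Since $r\big(\tfrac{1}{\Upsilon_{1}}\big)=1$ and $\Upsilon_{1}$ is itself integral, $\Upsilon_{1}$ is the only integral divisor in its class, so $\Upsilon_{2}\nu^{t_{1}-t_{2}}=\Upsilon_{1}$; and since $\nu$ does not divide $\Upsilon_{1}$ this forces $t_{1}=t_{2}$ and then $\Upsilon_{1}=\Upsilon_{2}$, completing the proof.

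The argument is essentially routine, and I do not expect a genuine obstacle. The one point that needs care is that \emph{both} defining properties of $\Upsilon$ --- the equality $r\big(\tfrac{1}{\Upsilon}\big)=1$ and the absence of $\nu$ from its support --- must be extracted from the single choice of $t$ as the minimal index with $a_{t}\geq1$, which is precisely where the jump-by-at-most-one property is used.
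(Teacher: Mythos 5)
Your proof is correct and rests on the same two ingredients as the paper's: the Riemann--Roch bound to make $r\big(\frac{1}{D\nu^{t}}\big)$ positive for large $t$, and the fact that imposing vanishing at $\nu$ cuts the dimension by at most one (which the paper uses in the equivalent form ``if $r\big(\frac{1}{\Upsilon}\big)\geq2$ then some section vanishes at $\nu$''), with an identical uniqueness argument. Your choice of the minimal $t$ with $a_{t}\geq1$ is just a cleaner repackaging of the paper's explicit degree-descent iteration, so this is essentially the same proof.
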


\begin{proof}
Take $\Gamma$ to be any divisor on $S$. The dimension $r(-\Gamma-p\nu)$ is non-zero for large enough $p$ (by the Riemann--Roch Theorem), so that there is a meromorphic function $F$ on $S$ such that $\Upsilon=\mathrm{div}(F)+\Gamma+p\nu$ is a positive divisor. This divisor is clearly linearly equivalent to $\Gamma+p\nu$. It therefore suffices to consider divisors of the form $\Upsilon-p\nu$ with $\Upsilon\geq0_{S}$, and we may assume that $\Upsilon$ does not contain $\nu$ in its support (otherwise just cancel it). Now, the space $L(-\Upsilon)$ contains the constant functions (since $\Upsilon$ is positive), and if $r(-\Upsilon)\geq2$ then it also contains a function $F$ vanishing at $\nu$. But then $\mathrm{div}(F)$ is of the form $\nu+\Sigma-\Upsilon$ for some positive divisor $\Sigma$ of smaller degree, so that $\Upsilon-p\nu$ is equivalent to $\Sigma-(p-1)\nu$. Each iteration of this process replaces $\Upsilon$ by a positive divisor of smaller degree, so that the process must terminate after finitely many steps. This shows that every divisor is linearly equivalent to a difference $\Upsilon-t\nu$ with $\Upsilon$ having the desired properties. Moreover, if $\Upsilon-t\nu$ is linearly equivalent to some other divisor of that form, say $\Sigma-p\nu$ with the same properties, then assuming without loss of generality that $p \leq t$ we find that $\Sigma+(t-p)\nu-\Upsilon$ is principal. But as the function yielding this divisor comes from $L(-\Upsilon)$, it has to be a constant function, whose divisor is trivial. It follows that $t=p$ and $\Sigma=\Upsilon$, and uniqueness is also established. This proves the lemma.
\end{proof}
Recalling that positive divisors $\Upsilon$ with $r(-\Upsilon)=1$ are called \emph{non-special}, and that we require that $\nu$ is not in the support of $\Upsilon$ in Lemma \ref{divSinfty}, we shall call divisors satisfying these two properties \emph{non-special divisors on $S\setminus\{\nu\}$} (this is an abuse of terminology, since non-special divisors are defined only on compact Riemann surfaces, but we shall use it nonetheless). Note that non-specialty here implies, in particular, positivity.

Taking back the Galois cover $f:X \to S$ with Galois group $G$ into account, we can now obtain a normalization for $G$-invariant divisors on $X$ (depending on the choice of $\nu \in S$).
\begin{prop}
Let $\Xi$ be a divisor on $X$ that it invariant under the Galois group $G$ of the cover $f:X \to S$. Then there exist a unique non-special divisor $\Upsilon$ on $S\setminus\{\nu\}$ and a unique divisor $\Delta$ on $X$ that is invariantly linearly equivalent to $\Xi$ with the following property: Given a point $\nu\neq\eta \in S$, consider any pre-image $P$ of $\eta$ in $X$, and set $C=\psi(\eta)$. Then the multiplicity $v_{P}(\Delta)$ satisfies the inequalities $o(C)v_{\eta}(\Upsilon) \leq v_{P}(\Delta)<o(C)\big(v_{\eta}(\Upsilon)+1\big)$. \label{norminvdiv}
\end{prop}

\begin{proof}
The divisors that are invariantly linearly equivalent to $\Xi$ are of the form $\Delta=\Xi-\mathrm{div}(h)$ for non-zero $h\in\mathbb{C}(X)_{\mathbf{1}}=f^{*}\mathbb{C}(S)^{\times}$. The $G$-invariance of $\Xi$ allows us to write it in a unique manner as $f^{*}\Gamma+\sum_{\eta \in S}\sum_{P \in f^{-1}(\eta)}l_{\eta}P$, where $\Gamma\in\mathrm{Div}(S)$ and for any $\eta \in S$, with $C=\psi(\eta)$, we have $0 \leq l_{\eta}<o(C)$ (hence the sum is finite since only points $\eta$ with non-trivial $\psi$-images may contribute to it). Now, if $h=f^{*}F$ for $F\in\mathbb{C}(S)^{\times}$ then replacing $\Xi$ by $\Delta=\Xi-\mathrm{div}(h)$ is clearly the same as replacing $\Gamma$ by $\Gamma-\mathrm{div}(F)$ inside the argument of $f^{*}$, and $\Delta$ satisfies the required conditions if and only if $\Gamma-\mathrm{div}(F)$ is the sum of a multiple of $\nu$ and a non-special divisor $\Upsilon$ on $S\setminus\{\nu\}$ (by the restrictions on the multipliers $l_{\eta}$). The existence and uniqueness of $\Delta$ are therefore consequences of the existence and uniqueness of $\Upsilon$ established in Lemma \ref{divSinfty}. This proves the proposition.
\end{proof}
We call the divisor $\Delta$ from Proposition \ref{norminvdiv} the \emph{normalized representative} of the invariant linear equivalence class of $\Xi$, or just \emph{normalized}. It would be more convenient to assume in Proposition \ref{norminvdiv} that $\nu$ is not a branching image of $f$. Then we have a good normalization at all the branch points. Indeed, otherwise $o(C)>1$ for $C=\psi(\nu)$, the order at a pre-image of $\nu$ will have a fixed residue modulo $o(C)$, and we lose information on $\Xi$ by not considering such pre-images.

\smallskip

For any non-trivial conjugacy class $C \subseteq G$ we denote the $r_{C}$ values $\eta \in S\setminus\{\nu\}$ with $\psi(\eta)=C$ by $\eta_{C,j}$ with $1 \leq j \leq r_{C}$. If $G$ is Abelian and $C=\{\sigma\}$ then the number of points is $r_{\sigma}$, and the points themselves are denoted by $\eta_{\sigma,j}$. In the genus 0 case of $S=\mathbb{P}^{1}(\mathbb{C})$ and $\nu=\infty$, these points $\eta_{C,j}$ are just complex numbers. If one replaces the notation $\eta_{C,j}$ by $\lambda_{C,j}$ in this case, Propositions \ref{StoGbr} and \ref{psiZn} show how this generalizes the notation $\lambda_{\alpha,i}$ of \cite{[Z]} for the branching values on a single $Z_{n}$ curve. In the more general Abelian case, presented as a fibered product of $Z_{m}$ curves as in Equation \eqref{fibprod}, our notation deals with the branching in all of the $Z_{m}$ curves together. It is independent of the presentation of $X$ as a fibered product of $Z_{m}$ curves and of generators appearing in $Z_{m}$-equations. Note that we assume that $\nu$ is not a branching image, so that the restriction $\eta\neq\nu$ does not affect $r_{C}$ for any non-trivial $C$. Over $\eta_{C,j}$ there are $\frac{n}{o(C)}$ points, which we denote by $P_{C,j,\upsilon}$ with $1\leq\upsilon\leq\frac{n}{o(C)}$ (or $P_{\sigma,j,\upsilon}$ if $G$ is Abelian). For any point $\eta \in S$ other than the branching images $\eta_{C,j}$ we shall not require a notation for its $n$ pre-images in $X$. On the other hand, if $S=\mathbb{P}^{1}(\mathbb{C})$ then the $n$ points on $X$ lying over $\nu=\infty\in\mathbb{P}^{1}(\mathbb{C})$ will be denoted by $\infty_{\upsilon}$ with $1\leq\upsilon \leq n$. These are the poles of $f^{*}z$, and $f$ is assumed to be non-branched at them.

If $x$ is any rational (or real) number then $\lfloor x \rfloor$ denotes the largest integer $n$ satisfying $n \leq x$, and the fractional part $x-\lfloor x \rfloor$ of $x$ will be denoted by $\{x\}$. For any $\chi\in\widehat{G}$ and any conjugacy class $C \subseteq G$ we define
\begin{equation}
u_{\chi,C}=o(C)\cdot\big\{\tfrac{\log\chi(C)}{2\pi i}\big\}\qquad\mathrm{and}\qquad\textstyle{t_{\chi}=\sum_{C \neq Id_{X}}\frac{r_{C}u_{\chi,C}}{o(C)}} \label{uchiCtchi}
\end{equation}
(so that $u_{\chi,C}$ to be the unique integer in the range $0 \leq u_{\chi,C}<o(C)$ satisfying $\chi(C)=\zeta_{o(C)}^{u_{\chi,C}}$). If $G$ is Abelian and $C$ is the singleton $\{\sigma\}$ then we write $u_{\chi,\sigma}$ for the expression $u_{\chi,C}$ from Equation \eqref{uchiCtchi}, and the sum defining $t_{\chi}$ in that equation is over non-trivial elements of $G$. A simple consequence of the definitions in Equation \eqref{uchiCtchi} is the following one.
\begin{lem}
For any $C$ and $\chi$, the number $u_{\overline{\chi},C}$ equals $o(C)-u_{\chi,C}$ if $u_{\chi,C}>0$ and vanishes otherwise. In addition, we have $t_{\chi}+t_{\overline{\chi}}=\sum_{\{C|\chi(C)\neq1\}}r_{C}$. \label{tuchichibar}
\end{lem}

\begin{proof}
The first assertion follows easily from the fact that $\overline{\chi}(C)=\zeta_{o(C)}^{-u_{\chi,C}}$ and the definition in Equation \eqref{uchiCtchi}. The second assertion is an immediate consequence of the first one. This proves the lemma.
\end{proof}

We can now determine the divisors of normalized functions in the spaces $\mathbb{C}(X)_{\chi}$ for $\chi\in\widehat{G}$. Recall that $\deg\Gamma$ denotes the degree of the divisor $\Gamma$.
\begin{prop}
The number $t_{\chi}$ from Equation \eqref{uchiCtchi} is a non-negative integer for every $\chi\in\widehat{G}$. For every such $\chi$ there exists a non-zero function $h_{\chi}$, unique up to scalar multiples, that spans $\mathbb{C}(X)_{\chi}$ over $\mathbb{C}(S)$ and has the normalized divisor \[f^{*}\big(\Upsilon_{\chi}-(\deg\Upsilon_{\chi}+t_{\chi})\nu\big)+\sum_{C \neq Id_{X}}\sum_{j=1}^{r_{C}}\sum_{\upsilon=1}^{\frac{n}{o(C)}}u_{\chi,C}P_{C,j,\upsilon},\] where $\Upsilon_{\chi}$ is a non-special divisor on $S\setminus\{\nu\}$. Finally, the equalities $t_{\chi}=0$ and $\Upsilon_{\chi}=0_{S}$, or equivalently $t_{\chi}=0$ and $\Upsilon_{\chi}-\deg\Upsilon_{\chi}\cdot\nu$ is principal, occur simultaneously if and only if $\chi=\mathbf{1}$. \label{divgens}
\end{prop}

\begin{proof}
As the space $\mathbb{C}(X)_{\chi}$ is 1-dimensional over $\mathbb{C}(S)$, divisors of non-zero functions there form a single full invariant linear equivalence class of $G$-invariant divisors. The existence and uniqueness of $h_{\chi}$ up to scalars, as well as of $\Upsilon_{\chi}$, are therefore consequences of Proposition \ref{norminvdiv}. Lemma \ref{charandord} shows that for any non-zero $h\in\mathbb{C}(X)_{\chi}$, the order $\mathrm{ord}_{P_{C,j,\upsilon}}h$ has to be congruent modulo $o(C)$ to the number $u_{\chi,C}$ from Equation \eqref{uchiCtchi}. It follows that for $\eta=\eta_{C,j}$, the number denoted by $l_{\eta}$ in the proof of Proposition \ref{norminvdiv} equals $u_{\chi,C}$. This shows that our expression for $\mathrm{div}(h_{\chi})$ is indeed the required one, up to multiples of $\nu$. We denote the multiple of $\nu$ in the argument of $f^{*}$ in $\mathrm{div}(h_{\chi})$ by $-(\deg\Upsilon_{\chi}+\tilde{t}_{\chi})$ for some integer $\tilde{t}_{\chi}$, and recall that $\mathrm{div}(h_{\chi})$ must have degree 0. Now, $f^{*}$ is known to multiply the degrees of divisors by $n$, and $\Upsilon_{\chi}-(\deg\Upsilon_{\chi})\nu$ has degree 0. This yields the vanishing of the difference $\sum_{C \neq Id_{X}}\frac{nr_{C}u_{\chi,C}}{o(C)}-n\tilde{t}_{\chi}$, from which we deduce that $t_{\chi}=\tilde{t}_{\chi}\in\mathbb{Z}$ (by Equation \eqref{uchiCtchi}), and non-negativity is also clear.

For the last assertion, one direction is immediate: Non-zero constant functions lie in $\mathbb{C}(X)_{\mathbf{1}}$ and their (trivial) divisor is normalized, so that indeed $\Upsilon_{\mathbf{1}}=0_{S}$ and $t_{\mathbf{1}}=0$ (the latter equality also follows immediately from Equation \eqref{uchiCtchi}, since $u_{\mathbf{1},C}=0$ for any $C$). On the other hand, if $t_{\chi}=0$ then we already know that $\mathrm{div}(h_{\chi})$ reduces to $f^{*}\big(\Upsilon_{\chi}-(\deg\Upsilon_{\chi})\nu\big)$. Hence if $\Upsilon_{\chi}=0_{S}$ then $\mathrm{div}(h_{\chi})=0$, the function $h_{\chi}$ is a constant from $\mathbb{C}(X)_{\mathbf{1}}$, and $\chi=\mathbf{1}$. Finally, if $\Upsilon_{\chi}-\deg\Upsilon_{\chi}\cdot\nu$ is principal then the function having this divisor lies in $L(-\Upsilon_{\chi})$, which must be a constant by the non-specialty of $\Upsilon_{\chi}$. As this implies that $\Upsilon_{\chi}=\deg\Upsilon_{\chi}\cdot\nu$, and $\Upsilon_{\chi}$ does not contain $\nu$ in its support, this condition is indeed equivalent to $\Upsilon_{\chi}=0_{S}$. This completes the proof of the proposition.
\end{proof}

\smallskip

We would like to relate these results, in the Abelian case, to the description of $X$ as a fibered product of cyclic covers of the quotient curve $S$ . The fibered product structure from Equation \eqref{fibprod} makes it clear that the functions $\prod_{l=1}^{q}w_{l}^{e_{l}}$ with $0 \leq e_{l}<m_{l}$ for every $1 \leq l \leq q$ form a basis for $\mathbb{C}(X)$ over $f^{*}\mathbb{C}(S)$. We shall denote the function associated with $E=(e_{l})_{l=1}^{q}$ by $w^{E}$.
\begin{prop}
Given $E=(e_{l})_{l=1}^{q}\in\mathbb{Z}^{q}$, the space $\mathbb{C}(S)w^{E}$ depends only on the image of $E$ in $\prod_{l=1}^{q}(\mathbb{Z}/m_{l}\mathbb{Z})$. Moreover, the decomposition of $\mathbb{C}(X)$ as $\bigoplus_{E\in\prod_{l=1}^{q}(\mathbb{Z}/m_{l}\mathbb{Z})}\mathbb{C}(S)w^{E}$ coincides with its decomposition as $\bigoplus_{\chi\in\widehat{G}}\mathbb{C}(X)_{\chi}$. \label{charZn}
\end{prop}

\begin{proof}
The first assertion follows from the fact that altering $e_{l}$ by a multiple of $m_{l}$ multiplies $w_{l}^{e_{l}}$ by a power of the function $F_{l}\in\mathbb{C}(S)^{\times}$. For the second one we recall that $\tau_{l}$ multiplies $w_{l}$ by $\zeta_{m_{l}}$ and leaves elements of $f^{*}\mathbb{C}(S)$ and the other $w_{k}$s invariant. The action of that automorphism on $f^{*}F \cdot w^{E}$, where $F$ is an arbitrary function in $\mathbb{C}(S)$, thus multiplies it by $\zeta_{m_{l}}^{e_{l}}$. The second assertion now follows from the fact that a character of $G$ is determined by its values on the generators $\tau_{l}$, $1 \leq l \leq q$, while the image of $\tau_{l}$ under a character can be taken arbitrarily from the powers of $\zeta_{m_{l}}$ independently of the images of the other generators. This proves the proposition.
\end{proof}

The condition for the irreducibility of $X$ from Equation \eqref{fibprod} can now be explained. The number $\beta$ associated with the product $w^{E}$ is the order of the corresponding character $\chi$ from Proposition \ref{charZn}. If $\prod_{l=1}^{q}F_{l}^{e_{l}\beta/m_{l}}$ is a $\beta$th power then $w^{E}$ would belong to $\mathbb{C}(S)$, which cannot be the case if $\beta>1$ (i.e., if $\chi\neq\mathbf{1}$) since $G$ operates on $X$ without a kernel as a subgroup of $Aut(X)$.

We remark that taking $w_{l}$ to be the function $h_{\chi}$ from Proposition \ref{divgens} associated with the appropriate character from Proposition \ref{charZn} would produce (up to scalars) the equation that is normalized in the sense of \cite{[FZ]} and \cite{[Z]}. On the other hand, even after doing so, the normalized function from Proposition \ref{divgens} that is associated with another character, corresponding to some $w^{E}$ for $E=(e_{l})_{l=1}^{q}$ with $0 \leq e_{l}<m_{l}$ for every $l$, will in general still not be $w^{E}$ itself.

\smallskip

In the special case where $S$ has genus 0, i.e., $S=\mathbb{P}^{1}(\mathbb{C})$ and $\nu=\infty$ is not a branching image, Lemma \ref{divSinfty} and Propositions \ref{norminvdiv} and \ref{divgens} take the following simpler and more explicit form (Proposition \ref{charZn} remains the same).
\begin{cor}
Every divisor on $S=\mathbb{P}^{1}(\mathbb{C})$ is linearly equivalent to the multiple of $\infty$ having the same degree. If $f:X\to\mathbb{P}^{1}(\mathbb{C})$ is a Galois cover then any $G$-invariant divisor $\Delta$ is invariantly linearly equivalent to a unique divisor involving only the points $P_{C,j,\upsilon}$ with multiplicities between 0 and $o(C)-1$ (depending on $C$ and $j$ but not on $\upsilon$) and the poles $\infty_{\upsilon}$, $1\leq\upsilon \leq n$ of $f^{*}z$ (all with the same multiplicity). The divisor of the function $h_{\chi}$ from Proposition \ref{norminvdiv} is \[\sum_{C \neq Id_{X}}\sum_{j=1}^{r_{C}}\sum_{\upsilon=1}^{\frac{n}{o(C)}}u_{\chi,C}P_{C,j,\upsilon}-t_{\chi}\sum_{\upsilon=1}^{n}\infty_{\upsilon},\] and $t_{\chi}$ is a strictly positive integer for any $\mathbf{1}\neq\chi\in\widehat{G}$.
\label{P1Cnorm}
\end{cor}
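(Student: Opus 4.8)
The plan is to specialize the general results of Lemma~\ref{divSinfty}, Proposition~\ref{norminvdiv}, and Corollary~\ref{divgens} to the case $S=\mathbb{P}^{1}(\mathbb{C})$ with $\nu=\infty$, using the elementary fact that on the Riemann sphere every divisor of degree $d$ is linearly equivalent to $\infty^{d}$. Concretely, I would first establish the opening sentence: given a divisor $\Gamma$ on $\mathbb{P}^{1}(\mathbb{C})$ of degree $d$, the space $L\big(\tfrac{1}{\Gamma\infty^{-d}\cdot\infty^{k}}\big)$ is nonzero for suitable $k$ by Riemann--Roch on the sphere (where $r\big(\tfrac{1}{\mathcal{D}}\big)=\deg\mathcal{D}+1$ for integral $\mathcal{D}$), but more directly one just observes that $\Gamma$ and $\infty^{d}$ have the same degree, hence differ by a principal divisor since $\mathrm{Pic}^{0}(\mathbb{P}^{1}(\mathbb{C}))$ is trivial. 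This immediately forces, in the notation of Lemma~\ref{divSinfty}, the divisor $\Upsilon$ to be trivial: the only integral divisor $\Upsilon$ not containing $\nu=\infty$ with $r\big(\tfrac{1}{\Upsilon}\big)=1$ is $\Upsilon=1$ (any integral divisor of positive degree on the sphere has $r\big(\tfrac{1}{\Upsilon}\big)=\deg\Upsilon+1\geq2$).

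Next I would feed this into Proposition~\ref{norminvdiv}: since $\Upsilon$ is trivial we have $v_{\nu}(\Upsilon)=0$, so the normalization condition $o(C)v_{\nu}(\Upsilon)\leq v_{P}(\Delta)<o(C)(v_{\nu}(\Upsilon)+1)$ becomes simply $0\leq v_{P}(\Delta)<o(C)$ at every point $P$ over a branching value $\eta_{C,j}$, and no constraint elsewhere except that the non-$f^{*}(\Upsilon)$ part is supported on branch fibers. Combined with the structure $f^{*}(\Gamma)\prod_{\eta}\prod_{P\in f^{-1}(\eta)}P^{l_{\eta}}$ from that proof and the triviality of $\Upsilon$ (so $\Gamma$ is linearly equivalent to a power of $\infty$, i.e.\ $f^{*}(\Gamma)$ is invariantly linearly equivalent to a power of $f^{*}(\infty)=\prod_{\upsilon=1}^{n}\infty_{\upsilon}$), one obtains the stated normalized form: powers of $P_{C,j,\upsilon}$ between $0$ and $o(C)$, depending only on $C$ and $j$, together with a common power of the $\infty_{\upsilon}$. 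For the formula for $\mathrm{div}(h_{\chi})$, I would just substitute $\Upsilon_{\chi}=1$ into the displayed formula of Corollary~\ref{divgens}: the $f^{*}$ term collapses to $f^{*}(\nu^{-t_{\chi}})=\prod_{\upsilon=1}^{n}\infty_{\upsilon}^{-t_{\chi}}$, leaving exactly $\prod_{C\neq\{Id_{X}\}}\prod_{j=1}^{r_{C}}\prod_{\upsilon=1}^{n/o(C)}P_{C,j,\upsilon}^{u_{\chi,C}}\prod_{\upsilon=1}^{n}\infty_{\upsilon}^{-t_{\chi}}$ (correcting the upper index on the last product to $n$, as in the excerpt).

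Finally, for the strict positivity of $t_{\chi}$ when $\chi\neq\mathbf{1}$: by Corollary~\ref{divgens} we already know $t_{\chi}\geq0$ and that $t_{\chi}=0$ together with $\Upsilon_{\chi}$ trivial forces $\chi=\mathbf{1}$; but here $\Upsilon_{\chi}$ is automatically trivial by the genus-0 hypothesis, so $t_{\chi}=0$ alone already forces $\chi=\mathbf{1}$, and contrapositively $\chi\neq\mathbf{1}$ gives $t_{\chi}>0$. The main (and only mildly delicate) point is bookkeeping the degree/$f^{*}$ compatibility correctly---that $f^{*}$ multiplies degrees by $n$, that $\deg f^{*}(\Upsilon_{\chi})=n\deg\Upsilon_{\chi}$ cancels the $\nu$-power as in Corollary~\ref{divgens}, and that in genus 0 the residual freedom in $\Gamma$ is entirely absorbed by powers of $\infty$---but all of this is already done in the cited results, so the proof is essentially a substitution $\Upsilon_{\chi}=1$ and an appeal to triviality of $\mathrm{Pic}^{0}$ of the sphere. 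I expect no real obstacle; the one thing to be careful about is not to conflate the $\tfrac{n}{o(C)}$ range of $\upsilon$ for the $P_{C,j,\upsilon}$ with the range $1\leq\upsilon\leq n$ for the $\infty_{\upsilon}$.
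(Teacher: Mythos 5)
Your proposal is correct and follows essentially the same route as the paper: reduce to the triviality of $\Upsilon$ and $\Upsilon_{\chi}$ in genus $0$ (which you justify both via the linear equivalence of any $\lambda\in\mathbb{C}$ with $\infty$ and via Riemann--Roch, exactly the two viewpoints the paper offers), then substitute into Lemma~\ref{divSinfty}, Proposition~\ref{norminvdiv}, and Corollary~\ref{divgens}, with the last assertion following because $t_{\chi}=0$ together with trivial $\Upsilon_{\chi}$ forces $\chi=\mathbf{1}$. Your observation that the upper limit of the final product over $\upsilon$ should read $n$ is also correct.
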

For the proof, recall that every point $\eta\in\mathbb{C}$ is linearly equivalent to $\infty$ via the divisor of the function $z-\eta$. The rest of the proof uses the same arguments from the general case, in which some instances of $f^{*}\nu=f^{*}\infty$ are replaced by $\sum_{\upsilon=1}^{n}\infty_{\upsilon}$ (for the last assertion, about $t_{\chi}$, recall that $\Upsilon_{\chi}=0_{S}$ for every $\chi$ in this case). Another point of view on the first assertion of Corollary \ref{P1Cnorm} is via the Riemann--Roch Theorem, which implies that the condition $r(-\Upsilon)=1$ can hold for a positive divisor $\Upsilon$ on $S$ only if $\deg\Upsilon \leq g_{S}$, indeed leaving only the trivial divisor as a possibility for $\Upsilon$ in case $g_{S}=0$. The positivity of the numbers $t_{\chi}$ for $\chi\neq\mathbf{1}$ in the last assertion of Corollary \ref{P1Cnorm} is related, in the Abelian case, to $X$ being irreducible. We also remark that in this case, when $g_{S}=0$, $G$ is Abelian, and the $w_{l}$s are normalized, if $\chi\in\widehat{G}$ is associated with the function $w^{E}$ from Proposition \ref{charZn}, the function $h_{\chi}$ is just $w^{E}$ divided by a polynomial having roots in those branching values in which $w^{E}$ has zeros of too large orders.

\section{Function Spaces And Invariant Divisors \label{LDeltaInv}}

Let $G$ is a finite subgroup of $Aut(X)$, take a character $\chi\in\widehat{G}$, and consider again the space $L(-\Delta)$ for a divisor $\Delta$ on $X$. The intersection $L(-\Delta)\cap\mathbb{C}(X)_{\chi}$ will be denoted by $L(-\Delta)_{\chi}$, and its dimension by $r_{\chi}(-\Delta)$. Here we investigate these dimensions for $G$-invariant divisors $\Delta$.

Recall that if $\Xi$ and $\Delta$ are linearly equivalent divisors then $r(-\Delta)=r(-\Xi)$, since if $\Delta=\mathrm{div}(h)+\Xi$ for some $h\in\mathbb{C}(X)^{\times}$ then multiplication by $h$ defines an isomorphism from $L(-\Delta)$ onto $L(-\Xi)$. Moreover, this isomorphism is ``canonical up to scalars'', since the function $h$ is determined by $\mathrm{div}(h)$ up to scalars. Lemma \ref{invlineq} allows us to describe the action on the components associated with characters of $\widehat{G}$ as follows.
\begin{lem}
If the two $G$-invariant divisors $\Delta$ and $\Xi$ are linearly equivalent, then there exists a canonical element $\rho\in\widehat{G}$ such that $r_{\chi}(-\Delta)=r_{\chi\rho}(-\Xi)$ for every $\chi\in\widehat{G}$. The canonical isomorphism takes each $L(-\Delta)_{\chi}$ onto $L(-\Xi)_{\chi}$ (in case the spaces are non-trivial) if and only if $\Delta$ and $\Xi$ are invariantly linearly equivalent. \label{actCXrho}
\end{lem}

\begin{proof}
Lemma \ref{invlineq} shows that if $\Delta$ and $\Xi$ are linearly equivalent then there exists some non-zero function $h$, lying in one of the spaces $\mathbb{C}(X)_{\rho}$ with $\rho\in\widehat{G}$, such that $\Delta=\mathrm{div}(h)+\Xi$. Moreover, since $h$ is unique up to scalars, $\rho$ depends only on $\Delta$ and $\Xi$. Now, since $G$ operates on $h$ via $\rho$, the isomorphism $L(-\Delta) \to L(-\Xi)$ defined by multiplication by $h$ clearly takes $L(-\Delta)_{\chi}$ onto $L(-\Xi)_{\chi\rho}$ for any $\chi\in\widehat{G}$. Taking dimensions yields the first assertion. The second assertion now follows from the fact that $\rho=\mathbf{1}$ if and only if $\Delta$ and $\Xi$ are invariantly linearly equivalent (where non-triviality is required since there is only one map between the 0 spaces). This proves the lemma.
\end{proof}

We shall also be using the following lemma.
\begin{lem}
If a positive, $G$-invariant divisor $\Delta$ satisfies $r_{\mathbf{1}}(-\Delta)=1$ then $\Delta$ is normalized, and if $\Upsilon$ is the associated non-special divisor on $S\setminus\{\nu\}$ and $\xi \in X$ is any pre-image of $\nu$ then $v_{\xi}(\Delta) \leq g_{S}-\deg\Upsilon$. The assertion about normalization holds also for non-positive $G$-invariant divisors $\Delta$ satisfying $r_{\mathbf{1}}(-\Delta)=0$, provided that $\Delta+f^{*}\nu\geq0_{X}$. \label{nonspres}
\end{lem}

\begin{proof}
The proof of Proposition \ref{norminvdiv} shows that any $G$-invariant divisor $\Delta$ on $X$ can be written as $f^{*}\Gamma$ for $\Gamma\in\mathrm{Div}(S)$ plus a divisor involving only the points $P_{C,j,\upsilon}$ with multiplicities $0 \leq l_{C,j} \leq o(C)-1$ (independently of $\upsilon$). Recall that for every such $\Gamma$ and every point $P \in X$ with $\eta=f(P)$ and $C=\psi(\eta)$ we have $v_{P}(f^{*}\Gamma)=o(C)v_{\eta}(\Gamma)$, and similarly $\mathrm{ord}_{P}f^{*}F=o(C)\mathrm{ord}_{\eta}F$ for every $F\in\mathbb{C}(S)^{\times}$. We deduce that $\Delta$ is positive if and only if $\Gamma$ is positive, and since $\mathbb{C}(X)_{\mathbf{1}}=f^{*}\mathbb{C}(S)$ we also obtain the equality $L(-\Delta)_{\mathbf{1}}=f^{*}L(-\Gamma)$. Assuming now that $\Delta\geq0_{X}$ and $r_{\mathbf{1}}(-\Delta)=1$, we get $\Gamma\geq0_{S}$ and $r(-\Gamma)=1$, implying that the divisor $\Upsilon=\Gamma-v_{\nu}(\Gamma)\cdot\nu$ is positive as well, with $v_{\nu}(\Upsilon)=0$. Moreover, since $\Gamma=\Upsilon+v_{\nu}(\Gamma)\cdot\nu\geq\Upsilon\geq0_{S}$ we deduce that $\Upsilon$ is a non-special divisor on $S\setminus\{\nu\}$, so that $\Delta$ is normalized by Proposition \ref{norminvdiv}. In addition, since $\nu$ is assumed not to be a branching image we obtain the equality $v_{\xi}(\Delta)=v_{\nu}(\Gamma)=\deg\Gamma-\deg\Upsilon$ for every pre-image $\xi$ of $\nu$ (as above). But as $\Gamma\geq0_{S}$ can be non-special on $S$ only if $\deg\Gamma \leq g_{S}$ (by the Riemann--Roch Theorem), the bound on $v_{\xi}(\Delta)$ is proved as well.

Now, if $\Delta$ is not positive but $f^{*}\nu+\Delta\geq0_{X}$ then $\Gamma$ must be of the form $\Upsilon-\nu$ for some positive divisor $\Upsilon$ on $S$ with $v_{\nu}(\Upsilon)=0$. The same argument now shows that $r(\nu-\Upsilon)=0$, and as it differs from $r(-\Upsilon)$ by at most 1 and $L(-\Upsilon)$ contains the constant functions (since $\Upsilon\geq0_{S}$), we again deduce that $\Upsilon$ is a non-special divisor on $S\setminus\{\nu\}$. Proposition \ref{norminvdiv} thus establishes the assertion about normalization also in this case. This proves the lemma.
\end{proof}
Note that in the first case considered in Lemma \ref{nonspres} the divisor $\Delta$ is normalized with respect to any base point $\nu \in S$, while in the second case the dependence on $\nu$ appears in condition $\Delta+f^{*}\nu\geq0_{X}$.

Here there is a difference, in the form of the possible divisors satisfying the conditions of Lemma \ref{nonspres}, between the case where $g_{S}=0$ and the case of positive $g_{S}$. In the latter case every divisor of the form $\Delta=f^{*}\eta$ for $\eta \in S$ (and there are infinitely many such divisors, no two of them being invariantly linearly equivalent) satisfies $r_{\mathbf{1}}(-\Delta)=1$. On the other hand, when $S=\mathbb{P}^{1}(\mathbb{C})$ (and $\nu=\infty$) we have a much nicer result.
\begin{cor}
In the case of a Galois cover $f:X\to\mathbb{P}^{1}(\mathbb{C})$, all the positive divisors from Lemma \ref{nonspres} are supported on the branch points of $f$. There are finitely many divisors of both types considered in that lemma (i.e., positive divisors versus non-positive divisors $\Delta$ with $\Delta+f^{*}\infty\geq0_{X}$). \label{brsup}
\end{cor}

\begin{proof}
Corollary \ref{P1Cnorm} shows that in this case there are no non-trivial non-special divisors on $\mathbb{P}^{1}(\mathbb{C})\setminus\{\infty\}$. Moreover, the bound on $v_{\xi}(\Delta)$ in Lemma \ref{nonspres} reduces here to 0, so that only the branch points may appear in the positive divisors there. In addition, they appear with bounded multiplicities: The multiplicity of $P_{C,j,\upsilon}$ in such a divisor is between 0 and $o(C)-1$. The same assertion holds for the non-positive divisors, except for the poles $\infty_{\upsilon}$, $1\leq\upsilon \leq n$ of $f^{*}z$ that now appear with multiplicity $-1$. Since only finitely many points may appear, and with bounded multiplicities, the finiteness of the number of possible divisors also follows. This proves the corollary.
\end{proof}

Applying Corollary \ref{brsup} to non-special positive divisors on $Z_{n}$ curves explains where the condition that the divisors be supported on the branch points of fully ramified $Z_{n}$ curves, appearing in, e.g., \cite{[FZ]}, \cite{[K1]}, \cite{[K2]}, and \cite{[Z]}, comes from. Indeed, this condition is equivalent to invariance under the cyclic Galois group of the $Z_{n}$ cover,  since in the fully ramified $Z_{n}$ curve case every branch point alone is already $G$-invariant. It also follows that Lemma \ref{nonspres} reduces, in the fully ramified cyclic case, to Lemma 1.5 of \cite{[Z]}.

\smallskip

Lemma \ref{nonspres} allows us to restrict attention, for many questions, to normalized $G$-invariant divisors. We recall that the branch points on $X$ are the points $P_{C,j,\upsilon}$ with $C$ a non-trivial conjugacy class in $G$, $1 \leq j \leq r_{C}$, and $1\leq\upsilon\leq\frac{n}{o(C)}$, and every such point maps via $f$ to $\eta_{C,j} \in S\setminus\{\nu\}$. An explicit expression of a normalized $G$-invariant divisor is given in terms of sets of points, as follows. For every nontrivial $C$ we partition the $r_{C}$ elements $\eta_{C,j}$, $1 \leq j \leq r_{C}$ of $S$ into $o(C)$ sets $B_{C,i}$, $0 \leq i<o(C)$. For any such set $B_{C,i}$ and any integer $e$, we denote by $eB_{C,i}$ the divisor on $X$ that is obtained as the sum of the $e$th multiples of all the pre-images of elements of $B_{C,i}$ in $X$ (i.e., $eB_{C,i}$ is the sum $\sum_{\{j|\eta_{C,j} \in B_{C,i}\}}\sum_{\upsilon=1}^{n/o(C)}eP_{C,j,\upsilon}$). In addition, it will turn out useful to consider, for a character $\chi\in\widehat{G}$ and a non-trivial conjugacy class $C \subseteq G$, the set
\begin{equation}
A_{C,\chi}=\bigcup_{i=0}^{u_{\chi,C}-1}B_{C,i}. \label{ACchi}
\end{equation}
As usual, when $G$ is Abelian we denote the sets in the partition by $B_{\sigma,i}$ with $Id_{X}\neq\sigma \in G$, and the sets from Equation \eqref{ACchi} by $A_{\sigma,\chi}$.

A general normalized $G$-invariant divisor $\Delta$ on $X$ is based on such a partition, on a non-special divisor $\Upsilon$ on $S\setminus\{\nu\}$, and on an integer $p$. The divisor $\Delta$ with these parameters is given, in the general case and in the particular genus 0 case of $S=\mathbb{P}^{1}(\mathbb{C})$ and $\nu=\infty$, by
\begin{equation}
f^{*}\big(\Upsilon+(p-\deg\Upsilon)\cdot\nu\big)+\sum_{C \neq Id_{X}}\sum_{i=0}^{o(C)-1}\big(o(C)-1-i\big)B_{C,i} \label{normDelgen}
\end{equation}
and
\begin{equation}
\sum_{C \neq Id_{X}}\sum_{i=0}^{o(C)-1}\big(o(C)-1-i\big)B_{C,i}+p\sum_{\upsilon=1}^{n}\infty_{\upsilon} \label{normDelP1C}
\end{equation}
respectively (since in the latter case $\Upsilon=0_{S}$ and we have substituted the multiple of $f^{*}\infty$). Wherever we use Equation \eqref{normDelgen} in what follows, the divisor $\Upsilon$ should be understood to be a non-special divisor on $S\setminus\{\nu\}$. The degree $\deg\Delta$ of the divisor $\Delta$ from both Equations \eqref{normDelgen} and \eqref{normDelP1C} equals
\begin{equation}
\sum_{C \neq Id_{X}}\sum_{i=0}^{o(C)-1}\frac{n\big(o(C)-1-i\big)}{o(C)}|B_{C,i}|+np, \label{degDelta}
\end{equation}
where here and throughout $|Y|$ stands for the cardinality of the finite set $Y$. Note that while not all the branch points necessarily appear in $\Delta$, the sets $B_{C,i}$, $0 \leq i<o(C)$ must always form a full partition of the points $\eta_{C,j}$, $1 \leq j \leq r_{C}$. Those points $\eta_{C,j}$ whose pre-images in $X$ do not appear in $\Delta$ in Equation \eqref{normDelgen} or \eqref{normDelP1C} are precisely those that lie in $B_{C,o(C)-1}$. The divisors from Lemma \ref{nonspres} correspond to $p=0$ and to $p=-1$ in these equations.

Note that there is a difference between Equation \eqref{normDelP1C} for the genus 0 case here and the notation from \cite{[FZ]} and \cite{[Z]}, in that here we work with partitions of the $f$-images of the branch points rather than the branch points themselves. While this makes the presentation of $\Delta$ using these sets a bit less straightforward (a more direct way for expressing $\Delta$ would be by using the pointwise pre-images of the sets $B_{C,i}$), we keep the cardinalities of the sets $B_{C,i}$ arbitrary with $\sum_{i=0}^{o(C)-1}|B_{C,i}|=r_{C}$ (while the subset $f^{-1}(B_{C,i})$ of $X$ would have cardinality $\frac{n}{o(C)}|B_{C,i}|$). In \cite{[FZ]} and \cite{[Z]} every $\sigma \in G$ with $r_{\sigma}>0$ was assumed to satisfy $o(\sigma)=n$ (this is full ramification), explaining why this difference is not visible in these references.

\smallskip

We can now prove the following assertion about the dimensions of the spaces $L(-\Delta)_{\chi}$ for normalized $G$-invariant divisors $\Delta$ and characters $\chi\in\widehat{G}$.
\begin{prop}
Take a normalized $G$-invariant divisor $\Delta$ on $X$ written as in Equation \eqref{normDelgen}, and a character $\chi\in\widehat{G}$. Then $r_{\chi}(-\Delta)$ equals \[r\bigg((\deg\Upsilon_{\chi}+\deg\Upsilon+t_{\chi}-p)\cdot\nu-\Upsilon_{\chi}-\Upsilon-\sum_{C \neq Id_{X}}\sum_{j \in A_{C,\chi}}\eta_{C,j}\bigg),\] where $t_{\chi}$ is the number from Equation \eqref{uchiCtchi}, $\Upsilon_{\chi}$ is the divisor appearing in Proposition \ref{divgens}, and $A_{C,\chi}$ is the set from Equation \eqref{ACchi}. \label{L1/Deltachi}
\end{prop}

\begin{proof}
The proof of Lemma \ref{actCXrho} shows that division by the function $h_{\chi}$ from Proposition \ref{divgens} takes the space $L(-\Delta)_{\chi}$ isomorphically onto $L\big(-\Delta-\mathrm{div}(h_{\chi})\big)_{\mathbf{1}}$, where the divisor $\Delta+\mathrm{div}(h_{\chi})$ is also $G$-invariant by the easy direction of Lemma \ref{invlineq}. The proof of Lemma \ref{nonspres} implies that if we decompose the latter divisor as the sum of a divisor $f^{*}\Gamma$ for some $\Gamma\in\mathrm{Div}(S)$ plus some branch points $P_{C,j,\upsilon}$ with non-negative multiplicities smaller than $o(C)$ then the required dimension $r_{\mathbf{1}}\big(-\Delta-\mathrm{div}(h_{\chi})\big)$ would be the same as $r(-\Gamma)$. The parts $f^{*}\big(\Upsilon+(p-\deg\Upsilon)\cdot\nu)$ of $\Delta$ and $f^{*}\big(\Upsilon_{\chi}-(\deg\Upsilon_{\chi}+t_{\chi})\cdot\nu)$ of $\mathrm{div}(h_{\chi})$ combine to the part of $f^{*}\Gamma$ involving $\Upsilon$, $\Upsilon_{\chi}$, and the multiple of $\nu$. The remaining expressions are just the sums $\sum_{\upsilon=1}^{n/o(C)}\big(o(C)-1-i+u_{\chi,C}\big)P_{C,j,\upsilon}$ for a non-trivial conjugacy class $C$ in $G$ and some index $1 \leq j \leq r_{C}$ such that $\eta_{C,j}$ lies in the set $B_{C,i}$. This remains a normalized expression if $i \geq u_{\chi,C}$, but otherwise it is the sum of $f^{*}\eta_{C,j}$ and a normalized expression. As the case where this product is not normalized occurs precisely when $i<u_{\chi,C}$, i.e., when $j$ is in the set $A_{C,\chi}$ from Equation \eqref{ACchi}, these are the $\eta_{C,j}$s which enter $\Gamma$ as well. This proves the proposition.
\end{proof}

Once again, in the quotient genus 0 case we have simpler results, as well as a more explicit description of the spaces $L(-\Delta)_{\chi}$ themselves. For this we adopt the following notation from \cite{[FZ]} and \cite{[Z]}. Given any integer $d\geq-1$, we denote by $\mathcal{P}_{\leq d}(z)$ the $(d+1)$-dimensional vector space of polynomials of degree not exceeding $d$ in $z$ (with complex coefficients). The 0 space is written here as $\mathcal{P}_{\leq-1}(z)$ to keep the dimension as $d+1$ also in this case. These spaces appear naturally in our setting of normalized divisors on $S=\mathbb{P}^{1}(\mathbb{C})$ since for $d\geq-1$ we have $L(-d\cdot\infty)=\mathcal{P}_{\leq d}(z)$ (and $L(-d\cdot\infty)=\{0\}$ for $d\leq-1$): Indeed, functions from $\mathbb{C}(S)=\mathbb{C}(z)$ in that space cannot have poles in finite points, so they must be polynomials in $z$, and the assertion now follows from the fact that the order of a polynomial $p(z)$ at $\infty$ is minus the degree of $p$.

When $f:X\to\mathbb{P}^{1}(\mathbb{C})$ is a Galois cover, we write $\mathcal{P}_{\leq d}(f^{*}z)$ for the appropriate space of polynomials in $f^{*}z$ (as a subspace of $\mathbb{C}(X)_{\mathbf{1}}=f^{*}\mathbb{C}(S)=\mathbb{C}(f^{*}z)$ inside $\mathbb{C}(X)$). Proposition \ref{L1/Deltachi} and its proof then have the following consequence.
\begin{cor}
If $S=\mathbb{P}^{1}(\mathbb{C})$, $\nu=\infty$, and $\Delta$ is as in Equation \eqref{normDelP1C} then the space $L(-\Delta)_{\chi}$ is $h_{\chi}\mathcal{P}_{d(\chi)}(f^{*}z)\big/\prod_{C \neq Id_{X}}\prod_{j \in A_{C,\chi}}(f^{*}z-\eta_{C,j})$, where the value of $d(\chi)$ is $p+\sum_{C \neq Id_{X}}|A_{C,\chi}|-t_{\chi}$ (or $-1$ if the latter number is negative). The dimension $r_{\chi}(-\Delta)$ is $\max\big\{0,p+1+\sum_{C \neq Id_{X}}|A_{C,\chi}|-t_{\chi}\big\}$. \label{funspP1C}
\end{cor}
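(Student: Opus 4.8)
The plan is to specialize Proposition \ref{L1/Deltachi} to the case $S=\mathbb{P}^{1}(\mathbb{C})$, $\nu=\infty$, and then unwind the divisor-theoretic statement into the concrete language of polynomials. Recall from Corollary \ref{P1Cnorm} that in this situation every divisor satisfying the conditions of Lemma \ref{divSinfty} is trivial, so in particular $\Upsilon$ and $\Upsilon_{\chi}$ are both the trivial divisor; thus the dimension $r\big(\frac{1}{\Delta}\big)_{\chi}$ from Proposition \ref{L1/Deltachi} collapses to $r\big(\infty^{t_{\chi}-p}\big/\prod_{C}\prod_{j\in A_{C,\chi}}\lambda_{C,j}\big)$, an ordinary Riemann--Roch dimension on $\mathbb{P}^{1}(\mathbb{C})$.

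The first step is to identify this space explicitly. A meromorphic function on $\mathbb{P}^{1}(\mathbb{C})$ lying in $L\big(\infty^{N}\big/\prod_{C}\prod_{j\in A_{C,\chi}}\lambda_{C,j}\big)$, where $N=t_{\chi}-p$, is exactly a rational function that is holomorphic away from $\infty$, has a pole of order at most $N$ at $\infty$, and vanishes at each $\lambda_{C,j}$ with $j\in A_{C,\chi}$; such a function must be a polynomial divisible by $\prod_{C}\prod_{j\in A_{C,\chi}}(z-\lambda_{C,j})$, of total degree at most $N$. Writing $a=\sum_{C}|A_{C,\chi}|$ for the number of prescribed zeros, this space is $\big(\prod_{C}\prod_{j\in A_{C,\chi}}(z-\lambda_{C,j})\big)\cdot\mathcal{P}_{\leq N-a}(z)$, which has dimension $\max\{0,N-a+1\}=\max\{0,t_{\chi}-p-a+1\}$. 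To match the statement of the corollary one then divides through: since by Corollary \ref{P1Cnorm} the function $h_{\chi}$ has divisor $\prod_{C}\prod_{j}\prod_{\upsilon}P_{C,j,\upsilon}^{u_{\chi,C}}\prod_{\upsilon}\infty_{\upsilon}^{-t_{\chi}}$, multiplying back by $h_{\chi}$ (as in the proof of Corollary \ref{actCXrho}) carries this polynomial space isomorphically onto $L\big(\frac{1}{\Delta}\big)_{\chi}$, giving the description $h_{\chi}\mathcal{P}_{\leq d(\chi)}(z)\big/\prod_{C}\prod_{j\in A_{C,\chi}}(z-\lambda_{C,j})$ with $d(\chi)=N-a=p+a-t_{\chi}$, truncated to $-1$ when this is negative, exactly as claimed. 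The dimension is then $d(\chi)+1=\max\{0,p+1+a-t_{\chi}\}$.

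Two small bookkeeping points need care but no real difficulty. First, one must check the sign and value of the exponent of $\infty$: in Proposition \ref{L1/Deltachi} the power of $\nu$ in the Riemann--Roch symbol is $\nu^{\deg(\Upsilon_{\chi}\Upsilon)+t_{\chi}-p}$ in the numerator, and with $\Upsilon=\Upsilon_{\chi}$ trivial this is $\nu^{t_{\chi}-p}$, so a nonzero function is allowed a pole of order $t_{\chi}-p$ at infinity — consistent with the top degree $d(\chi)+a=t_{\chi}-p$ of the polynomials we allow before dividing by the fixed factor. Second, when $d(\chi)<0$ one should confirm that the space is genuinely $0$ and that the formula $\max\{0,p+1+a-t_{\chi}\}$ correctly returns $0$; this is immediate since $p+1+a-t_{\chi}=d(\chi)+1\leq 0$ precisely in that range, and the convention $\mathcal{P}_{\leq-1}(z)=0$ was set up for exactly this. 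I do not expect a genuine obstacle here: the content is entirely in Proposition \ref{L1/Deltachi} together with the genus-$0$ vanishing of $\Upsilon$ and $\Upsilon_{\chi}$ from Corollary \ref{P1Cnorm}, and the remaining work is the routine translation of a Riemann--Roch space on $\mathbb{P}^{1}(\mathbb{C})$ into polynomials with prescribed zeros and a pole bound at infinity.
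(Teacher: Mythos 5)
Your overall strategy is the right one and is essentially the paper's: specialize Proposition \ref{L1/Deltachi} using the triviality of $\Upsilon$ and $\Upsilon_{\chi}$ from Corollary \ref{P1Cnorm}, then translate the resulting Riemann--Roch space on $\mathbb{P}^{1}(\mathbb{C})$ into polynomials, and finally multiply back by $h_{\chi}$. But the translation step reverses the paper's divisor convention, and this produces the wrong space and the wrong dimension. In the convention of Section \ref{NormForm}, $L\big(\frac{1}{\Delta}\big)$ consists of functions with $\mathrm{ord}_{P}\phi\geq-v_{P}(\Delta)$, so in the symbol $L\big(\infty^{N}\big/\prod_{C}\prod_{j\in A_{C,\chi}}\lambda_{C,j}\big)$ with $N=t_{\chi}-p$ the points $\lambda_{C,j}$, sitting in the denominator, are \emph{allowed simple poles}, while the numerator $\infty^{N}$ forces $\mathrm{ord}_{\infty}\phi\geq N$, i.e.\ a zero of order at least $N$ at $\infty$ (equivalently a pole of order at most $p-t_{\chi}$ there). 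You read it the other way: zeros prescribed at the $\lambda_{C,j}$ and a pole of order up to $N$ allowed at $\infty$. The correct identification (writing $a=\sum_{\{Id_{X}\}\neq C\subseteq G}|A_{C,\chi}|$ as you do) is $\mathcal{P}_{\leq d(\chi)}(z)\big/\prod_{C}\prod_{j\in A_{C,\chi}}(z-\lambda_{C,j})$ with $d(\chi)=p+a-t_{\chi}$, since the order at $\infty$ of $Q(z)/\prod_{C}\prod_{j\in A_{C,\chi}}(z-\lambda_{C,j})$ is $a-\deg Q$ and the condition $a-\deg Q\geq t_{\chi}-p$ is exactly $\deg Q \leq d(\chi)$; this has dimension $\max\{0,p+1+a-t_{\chi}\}$. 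Your space $\prod_{C}\prod_{j\in A_{C,\chi}}(z-\lambda_{C,j})\cdot\mathcal{P}_{\leq N-a}(z)$ has dimension $\max\{0,t_{\chi}-p-a+1\}=\max\{0,1-d(\chi)\}$, a different number whenever $d(\chi)\neq0$.

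The error is then masked by the identity ``$d(\chi)=N-a=p+a-t_{\chi}$'': with $N=t_{\chi}-p$ one has $N-a=t_{\chi}-p-a=-(p+a-t_{\chi})$, so the two expressions you equate are in fact negatives of one another. Note also that your identified space carries $\prod_{C}\prod_{j\in A_{C,\chi}}(z-\lambda_{C,j})$ in the numerator, whereas the corollary (correctly) places it in the denominator, so multiplying your space by $h_{\chi}$ cannot yield the asserted $h_{\chi}\mathcal{P}_{\leq d(\chi)}(z)\big/\prod_{C}\prod_{j\in A_{C,\chi}}(z-\lambda_{C,j})$. A quick sanity check that exposes the sign: on $\mathbb{P}^{1}(\mathbb{C})$ the dimension must be $\max\{0,\deg+1\}$ for the degree of the divisor of allowed poles, which here is $a-(t_{\chi}-p)=d(\chi)$, confirming $\max\{0,d(\chi)+1\}$ rather than $\max\{0,1-d(\chi)\}$. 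Once the convention is applied in the correct direction, the remainder of your argument (multiplying by $h_{\chi}$ as in Corollary \ref{actCXrho}, and the $d(\chi)\leq-1$ degenerate case) goes through exactly as in the paper.
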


\begin{proof}
We apply the proof of Proposition \ref{L1/Deltachi}, recalling that $\Upsilon=\Upsilon_{\chi}=0_{S}$ in this case, and that each of the points $\eta_{C,j}\in\mathbb{C}$ is equivalent to $\infty$ via the function $z-\eta_{C,j}$ (see Corollary \ref{P1Cnorm}). The space in question is thus $f^{*}L\big(-d(\chi)\cdot\infty\big)$ multiplied by the function $h_{\chi}\big/\prod_{C \neq Id_{X}}\prod_{j \in A_{C,\chi}}(f^{*}z-\lambda_{C,j})$, and its dimension is $r\big(-d(\chi)\cdot\infty\big)$. The first assertion is then a consequence of the structure of these spaces, and the second one immediately follows since the dimension of $\mathcal{P}_{\leq d}(z)$ is $d+1$ for any $d\geq-1$. This proves the corollary.
\end{proof}

In particular, by setting $\chi=\mathbf{1}$ we get $u_{\mathbf{1},C}=0$ for every $C$ in Equation \eqref{uchiCtchi}, hence also $t_{\mathbf{1}}=0$. In addition we get $A_{C,\mathbf{1}}=\emptyset$ for every $C$ in Equation \eqref{ACchi}, and $h_{\mathbf{1}}$ is a constant function in Proposition \ref{divgens}. Hence Corollary \ref{funspP1C} reduces to the assertion that if $\Delta$ is given in Equation \eqref{normDelP1C} then $L(-\Delta)_{\mathbf{1}}$ is just $\mathcal{P}_{\leq d}(f^{*}z)$ for $d=\max\{-1,p\}$ (this is the required space $f^{*}L(-p\cdot\infty)$), of dimension $\max\{0,p+1\}$.

\section{Abelian Covers of $\mathbb{P}^{1}(\mathbb{C})$ \label{AbCovP1C}}

Consider now the case where the finite subgroup $G \subseteq X$ is Abelian. In this case the direct sum $\bigoplus_{\chi\in\widehat{G}}\mathbb{C}(X)_{\chi}$ is the full space $\mathbb{C}(X)$. We also have the following generalization of the part of Proposition 1.2 of \cite{[Z]} involving functions.
\begin{lem}
If $G$ is Abelian and $\Delta$ is a $G$-invariant divisor then $L(-\Delta)$ is the direct sum $\bigoplus_{\chi\in\widehat{G}}L(-\Delta)_{\chi}$ and $r(-\Delta)=\sum_{\chi\in\widehat{G}}r_{\chi}(-\Delta)$. \label{decom}
\end{lem}

\begin{proof}
For an arbitrary divisor $\Delta$, if $f \in L(-\Delta)$ and $\sigma \in G$ then $\sigma(f)$ lies in $L\big(-\sigma(\Delta)\big)$. This shows that if $\Delta$ is $G$-invariant then $L(-\Delta)$ is a representation space of $G$. The two statements thus follow from the decomposition theorem for representations of Abelian groups. This proves the lemma.
\end{proof}
We note that if $G$ is not necessarily Abelian and $\Delta$ is $G$-invariant then $\bigoplus_{\chi\in\widehat{G}}L(-\Delta)_{\chi}$ is the space of functions in $L(-\Delta)$ on which the commutator subgroup $[G,G]$ operates trivially.

Assume now that $G$ is the Galois group of the Abelian cover $f:X \to S$.
\begin{prop}
Let $\Delta$ be a normalized $G$-invariant divisor on $X$, presented as in Equation \eqref{normDelgen} (but with every index $C$ replaced by $\sigma$). For every $\chi\in\widehat{G}$ consider the number $t_{\chi}$ from Equation \eqref{uchiCtchi}, the divisor $\Upsilon_{\chi}$ from Proposition \ref{divgens}, and the sets $A_{\sigma,\chi}$ with $Id_{X}\neq\sigma \in G$ from Equation \eqref{ACchi}. Then the dimension
$r(-\Delta)$ equals
\[\sum_{\chi\in\widehat{G}}r\bigg(\big(\deg(\Upsilon_{\chi}+\Upsilon)+t_{\chi}-p\big)\cdot\nu-\Upsilon_{\chi}-\Upsilon-\sum_{\sigma \neq Id_{X}}\sum_{j \in A_{\sigma,\chi}}\eta_{\sigma,j}\bigg).\] \label{r1/Delta}
\end{prop}

\begin{proof}
Lemma \ref{decom} shows that $r(-\Delta)=\sum_{\chi\in\widehat{G}}r_{\chi}(-\Delta)$, and the dimensions $\{r_{\chi}(-\Delta)\}_{\chi\in\widehat{G}}$ are evaluated in Proposition \ref{L1/Deltachi}. This proves the proposition.
\end{proof}

Let us see how for the trivial divisor $\Delta=0_{X}$ (which is clearly normalized), we recover the usual decomposition of $L(0_{X})=\mathbb{C}$ as the direct sum of $L(0_{X})_{\mathbf{1}}=\mathbb{C}$ and $n-1$ zero spaces. In the notation of Equation \eqref{normDelgen} we get $p=0$ and $\Upsilon=0_{S}$, as well as $\eta_{\sigma,j} \in B_{\sigma,o(\sigma)-1}$ and $u_{\chi,\sigma}<o(\sigma)$ for every $\chi$ and $\sigma$. The sets $A_{\sigma,\chi}$ from Equation \eqref{ACchi} are therefore all empty. Hence we obtain the sum of the numbers $r\big((\deg\Upsilon_{\chi}+t_{\chi})\cdot\nu-\Upsilon_{\chi}\big)$, which for $\chi=\mathbf{1}$ equals just $r(0_{S})=1$. On the other hand, Proposition \ref{divgens} implies that if $\chi\neq\mathbf{1}$ then either $t_{\chi}>0$ and we get the (vanishing) dimension associated with a divisor of negative degree, or $t_{\chi}=0$ and $\Upsilon_{\chi}$ is a non-trivial divisor not linearly equivalent to $\deg\Upsilon_{\chi}\cdot\nu$, so that again this dimension vanishes. By introducing the \emph{Kronecker delta symbol} $\delta_{\alpha,\beta}$, which equals 1 when the two objects (numbers, characters, representations, etc.) $\alpha$ and $\beta$ coincide and vanishes otherwise, we can summarize this paragraph more succinctly in the equality $r_{\chi}(0_{X})=\delta_{\chi,\mathbf{1}}$.

For genus 0 we thus deduce the following result.
\begin{cor}
For a normalized $G$-invariant divisor $\Delta$ on an Abelian cover $X$ of $\mathbb{P}^{1}(\mathbb{C})$, we have $r(-\Delta)=\sum_{\chi\in\widehat{G}}\max\big\{0,p+1+\sum_{\sigma \neq Id_{X}}|A_{\sigma,\chi}|-t_{\chi}\big\}$. \label{totdimP1C}
\end{cor}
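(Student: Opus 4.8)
The plan is to combine Proposition \ref{r1/Delta} with the genus-0 simplifications already worked out. First I would invoke Corollary \ref{funspP1C}, which evaluates each summand $r\big(\frac{1}{\Delta}\big)_{\chi}$ in the case $S=\mathbb{P}^{1}(\mathbb{C})$, $\nu=\infty$ as exactly $\max\big\{0,p+1+\sum_{\{Id_{X}\}\neq C\subseteq G}|A_{C,\chi}|-t_{\chi}\big\}$. The content of that corollary was precisely the analysis of the space $L\big(\frac{1}{\Delta}\big)_{\chi}$ via division by $h_{\chi}$, which trivializes $\Upsilon$ and $\Upsilon_{\chi}$ and reduces everything to polynomials in $z$ of bounded degree; I would not re-derive it here.

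Next I would observe that since $G$ is assumed Abelian in this section, Lemma \ref{decom} applies: $L\big(\frac{1}{\Delta}\big)=\bigoplus_{\chi\in\widehat{G}}L\big(\frac{1}{\Delta}\big)_{\chi}$, hence $r\big(\frac{1}{\Delta}\big)=\sum_{\chi\in\widehat{G}}r\big(\frac{1}{\Delta}\big)_{\chi}$. Substituting the per-character formula from Corollary \ref{funspP1C} into this sum immediately yields
\[
r\Big(\tfrac{1}{\Delta}\Big)=\sum_{\chi\in\widehat{G}}\max\big\{0,p+1+\textstyle\sum_{Id_{X}\neq\sigma\subseteq G}|A_{\sigma,\chi}|-t_{\chi}\big\},
\]
which is the claimed identity (with the usual Abelian-case convention of writing $\sigma$ for the conjugacy class $\{\sigma\}$, so that $A_{\sigma,\chi}$ is the $A_{C,\chi}$ of Proposition \ref{L1/Deltachi}). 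That is essentially the whole argument.

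Honestly, there is no real obstacle: this corollary is a bookkeeping consequence of two results proved just above it. The only minor point to be careful about is notational consistency — matching the conjugacy-class notation $A_{C,\chi}$, $u_{\chi,C}$, $o(C)$, $r_{C}$ of the general sections with the single-element notation $A_{\sigma,\chi}$, $o(\sigma)$, $r_{\sigma}$ used throughout this Abelian section, and confirming that the normalized divisor $\Delta$ described here (partitions $B_{\sigma,i}$, divisor $\Upsilon$, integer $p$) is exactly the object to which Corollary \ref{funspP1C} applies. Once that identification is made explicit, the displayed equality follows by summing over $\widehat{G}$, and the proof is complete.

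\begin{proof}
This is immediate from Lemma \ref{decom} and Corollary \ref{funspP1C}: since $G$ is Abelian, Lemma \ref{decom} gives $r\big(\frac{1}{\Delta}\big)=\sum_{\chi\in\widehat{G}}r\big(\frac{1}{\Delta}\big)_{\chi}$, and Corollary \ref{funspP1C} evaluates each term $r\big(\frac{1}{\Delta}\big)_{\chi}$ as $\max\big\{0,p+1+\sum_{Id_{X}\neq\sigma\subseteq G}|A_{\sigma,\chi}|-t_{\chi}\big\}$ (where $A_{\sigma,\chi}$ denotes the set $A_{C,\chi}$ of Proposition \ref{L1/Deltachi} for the conjugacy class $C=\{\sigma\}$). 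Summing over $\chi\in\widehat{G}$ yields the asserted formula.
\end{proof}
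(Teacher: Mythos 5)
Your proposal is correct and follows essentially the same route as the paper: the authors likewise reduce to the character decomposition of Lemma \ref{decom} (via the argument of Proposition \ref{r1/Delta}) and then plug in the genus-zero evaluation of each $r\big(\frac{1}{\Delta}\big)_{\chi}$, which is exactly the content of Corollary \ref{funspP1C} that you cite. Your notational remark identifying $A_{\sigma,\chi}$ with $A_{C,\chi}$ for $C=\{\sigma\}$ is the only point of care, and you handle it correctly.
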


\begin{proof}
We apply the same argument appearing in the proof of Proposition \ref{r1/Delta}, combined with the results of Corollary \ref{funspP1C}. This proves the corollary.
\end{proof}

The evaluation for the trivial divisor appearing above is in correspondence with the fact that in the genus 0 case the number $\max\big\{0,1-t_{\chi}\big\}$ equals 1 for $\chi=\mathbf{1}$ and vanishes otherwise when $g_{S}=0$.

\smallskip

In the general theory of theta characteristics, non-special positive divisors of degree $g_{X}$ on $X$ are very important. Finding all these divisors in our setting is difficult in general (e.g., because of the divisors $\Upsilon_{\chi}$ and $\Upsilon$, on which we have no control), but when $S$ has genus 0 we can prove the following generalization of Theorem 1.6 of \cite{[Z]}.
\begin{thm}
The $G$-invariant positive divisors $\Delta$ of degree $g_{X}$ on the Abelian cover $X$ of $\mathbb{P}^{1}(\mathbb{C})$ that are non-special are precisely the normalized divisors whose presentation in Equation \eqref{normDelP1C} have the following properties: The parameter $p$ vanishes, and the cardinality condition $\sum_{\sigma \neq Id_{X}}\sum_{i=0}^{u_{\chi,\sigma}-1}|B_{\sigma,i}|=t_{\chi}-1$ holds for every $\mathbf{1}\neq\chi\in\widehat{G}$, where $t_{\chi}$ is the number from Equation \eqref{uchiCtchi}. There are finitely many such divisors. \label{intnonsp}
\end{thm}
Note that the indices of the sets from Equation \eqref{normDelP1C} are written again as $\sigma$ rather than $C$, since $G$ is Abelian.

\begin{proof}
Lemma \ref{nonspres} allows us to restrict attention to normalized divisors with $p=0$, and Corollary \ref{brsup} establishes the finiteness of the number of such divisors (since $S=\mathbb{P}^{1}(\mathbb{C})$ here). Corollary \ref{totdimP1C} implies that in this case $\Delta$ is non-special if and only if the numbers $\max\big\{0,1+\sum_{\sigma \neq Id_{X}}|A_{\sigma,\chi}|-t_{\chi}\big\}$ from Corollary \ref{funspP1C} sum to 1. For $\chi=\mathbf{1}$ we get precisely 1 (since $t_{\mathbf{1}}=0$ and the set $A_{\sigma,\mathbf{1}}$ from Equation \eqref{ACchi} is empty for every $\sigma$), and the corresponding space $L(-\Delta)_{\mathbf{1}}$ consists of the constant functions. Hence non-specialty is equivalent to the condition $\sum_{\sigma \neq Id_{X}}|A_{\sigma,\chi}| \leq t_{\chi}-1$ for every $\chi\neq\mathbf{1}$. As Equation \eqref{ACchi} presents $A_{\sigma,\chi}$ as the appropriate disjoint union for every $\sigma \neq Id_{X}$ and $\chi\neq\mathbf{1}$, the left hand side here coincides with the left hand side of the desired cardinality condition associated with $\chi$.

Now, the sum over $\chi$ of the right hand side is $1-n+\sum_{\sigma \neq Id_{X}}\sum_{\chi\neq\mathbf{1}}\frac{r_{\sigma}u_{\chi,\sigma}}{o(\sigma)}$ (using Equation \eqref{uchiCtchi} and since $-1$ from each $\mathbf{1}\neq\chi\in\widehat{G}$ sum to $1-n$), and in the latter sum we can also include the character $\chi=\mathbf{1}$ since Equation \eqref{uchiCtchi} immediately implies that $u_{\mathbf{1},\sigma}=0$ for every $\sigma$. Fix now $Id_{X}\neq\sigma \in G$, and since every power of $\zeta_{o(\sigma)}$ is attained as $\chi(\sigma)$ for $\frac{n}{o(\sigma)}$ characters $\chi$, we have the equality $u_{\chi,\sigma}=u$ precisely $\frac{n}{o(\sigma)}$ times for any $0 \leq u<o(\sigma)$. The inner sum over $\chi$ is therefore $\frac{r_{\sigma}}{o(\sigma)}$ times $\frac{n}{o(\sigma)}\cdot\frac{o(\sigma)(o(\sigma)-1)}{2}$, so that the total sum $1-n+\sum_{\sigma \neq Id_{X}}\frac{nr_{\sigma}}{2o(\sigma)}\big(o(\sigma)-1\big)$ coincides with the expression for $g_{X}$ obtained from Corollary \ref{Galgen} in the case of Abelian $G$ and $g_{S}=0$. On the other hand, on the left hand side the cardinality of each set $B_{\sigma,i}$ is counted precisely once for each $\chi\in\widehat{G}$ with $u_{\overline{\chi},\sigma}>i$ (the omission of $\chi=\mathbf{1}$ does not affect this assertion, since we have seen that $0=u_{\mathbf{1},\sigma} \leq i$ for any $i$ and any $\sigma$), and the number of such characters is $\frac{n}{o(\sigma)}$ times the cardinality $o(\sigma)-1-i$ of the set of numbers $0 \leq u<o(\sigma)$ that are larger than $i$. The resulting sum is thus the expression for $\deg\Delta$ in Equation \eqref{degDelta} (recall that $p=0$), implying that the sum of our inequalities becomes $\deg\Delta \leq g_{X}$. By our assumption on $\deg\Delta$, we are now in a situation where summing non-strict inequalities, all pointing to the same direction, yields an equality. As this occurs if and only if all the inequalities were equalities to begin with, all of our inequalities are equalities as desired. This completes the proof of the theorem.
\end{proof}

\smallskip

We also prove the following result, involving non-positive divisors, generalizing those divisors from \cite{[FZ]} and \cite{[Z]} that in our additive notation are written as positive divisors of degree $g_{X}+n-1$ minus $\sum_{\upsilon=1}^{n}\infty_{\upsilon}$ (these references use a multiplicative notation). These are the divisors that turn out to be the most appropriate for stating and proving Thomae formulae in \cite{[KZ]}.
\begin{thm}
For a $G$-invariant divisor $\Xi$ on an Abelian cover $X$ of $\mathbb{P}^{1}(\mathbb{C})$, with $\deg\Xi=g_{X}-1$, the condition $r(-\Xi)=0$ holds if and only if $\Xi$ is invariantly linearly equivalent to a normalized divisor $\Delta$ whose presentation in Equation \eqref{normDelP1C} is with $p=-1$ and such that for every $\chi\in\widehat{G}$ the sum $\sum_{\sigma \neq Id_{X}}\sum_{i=0}^{u_{\chi,\sigma}-1}|B_{\sigma,i}|$ yields the number $t_{\chi}$ from Equation \eqref{uchiCtchi}. \label{eqnointg-1}
\end{thm}

\begin{proof}
Since linear equivalence (hence in particular invariant linear equivalence) leaves the dimension $r(-\Xi)$ invariant, Proposition \ref{norminvdiv} allows us to restrict attention to the unique normalized divisor $\Delta$ in the invariant linear equivalence class of $\Xi$. We write this divisor as in Equation \eqref{normDelP1C}, and note that since $\Delta$ cannot be positive if $r(-\Delta)=0$ (for otherwise $L(-\Delta)$ would contain the constant functions), the parameter $p$ must be negative. It would thus be convenient to decompose the expression from Equation \eqref{normDelP1C}, and write $\Delta$ as $\Sigma-|p|\sum_{\upsilon=1}^{n}\infty_{\upsilon}$ where $\Sigma$ is the positive divisor obtained from the sets $B_{\sigma,i}$ (again with an index $\sigma$ and not $C$), without the poles of $f^{*}z$. It now easily follows (e.g., from Equation \eqref{degDelta} and the known degree $g_{X}-1$ of $\Delta$) that $\deg\Sigma=g_{X}+n|p|-1$.

At this point we follow the proof of Theorem \ref{intnonsp}. We apply Corollary \ref{totdimP1C} to see that the equality $r(-\Delta)=0$ is equivalent to the vanishing of the number $\max\big\{0,1-|p|+\sum_{\sigma \neq Id_{X}}|A_{\sigma,\chi}|-t_{\chi}\big\}$ from Corollary \ref{funspP1C} for every $\chi\in\widehat{G}$, or equivalently to the inequality $\sum_{\sigma \neq Id_{X}}|A_{\sigma,\chi}| \geq t_{\chi}+|p|-1$. Here we take the sum over all $\chi\in\widehat{G}$ (now including $\chi=\mathbf{1}$), where the left hand side yields $\deg\Sigma$ as in the proof of Theorem \ref{intnonsp} (the fact that now $\chi=\mathbf{1}$ is included makes no difference on this side, since we have seen that $A_{\sigma,\mathbf{1}}$ is empty for every $\sigma \neq Id_{X}$). On the right hand side we get the sum of $\sum_{\chi\in\widehat{G}}|p|=n|p|$, the expression $\sum_{\mathbf{1}\neq\chi\in\widehat{G}}(t_{\chi}-1)$ which was evaluated as $g_{X}$ in the proof of Theorem \ref{intnonsp}, and the extra term $t_{\mathbf{1}}-1=-1$. Altogether we get $\deg\Sigma \leq g_{X}+n|p|-1$, which was assumed to be an equality, implying again that all the inequalities from above must be equalities. In particular, for $\chi=\mathbf{1}$ we have the equality between $\sum_{Id_{X}\neq\sigma \subseteq G}|A_{\sigma,\chi}|=0$ and $t_{\mathbf{1}}+|p|-1=|p|-1$, so that $p=-1$ (a fact that also follows from the inequality $0=\sum_{\sigma \neq Id_{X}}|A_{\sigma,\mathbf{1}}| \geq t_{\mathbf{1}}+|p|-1$, with $t_{\mathbf{1}}=0$, since $|p|\geq1$). Substituting this value into the remaining equalities yields the desired cardinality conditions. This proves the theorem.
\end{proof}

\section{Differentials and $q$-Differentials \label{Diffs}}

This section begins the investigation of the representations of the Galois group $G$ of a Galois cover $f:X \to S$ on differentials on $X$, and more generally on $q$-differentials (see \cite{[FK]}). We call the divisor of any $q$-differential on $X$ \emph{$q$-canonical}. Because all the $q$-canonical divisors are linearly equivalent to one another (for the same $q$), we shall consider a simple $q$-differential on $X$, and deduce the assertions about all the other ones using the results for functions above. We denote the space of \emph{meromorphic} $q$-differentials on $X$ by $\Omega^{q}(X)$. Following \cite{[FK]} we define, for a divisor $\Delta$, the vector space $\Omega^{q}(\Delta)$ to be the one consisting of 0 and of those non-zero $q$-differentials $\omega\in\Omega^{q}(X)$ that satisfy $\mathrm{ord}_{P}\omega \geq v_{P}(\Delta)$ for every $P \in X$. This space has finite dimension, which is denoted by $i^{q}(\Delta)$. In case $q=1$ the superscript $q$ will simply be omitted. As a subgroup $G \subseteq Aut(X)$ operates linearly also on $\Omega^{q}(X)$, the spaces $\Omega^{q}(X)_{\chi}$ for $\chi\in\widehat{G}$ are defined similarly to $\mathbb{C}(X)_{\chi}$. The intersection with $\Omega^{q}(\Delta)$ produces the space denoted by $\Omega^{q}(\Delta)_{\chi}$, of (finite) dimension $i^{q}_{\chi}(\Delta)$.

The analysis of $G$-invariant $q$-canonical divisors is based on the following lemma. Let $f:X \to S$ be a general Galois cover, with Galois group $G$, and we denote by $\Omega^{q}(S)$ the space of meromorphic $q$-differentials on $S$.
\begin{lem}
The space $\Omega^{q}(X)_{\chi}$ is non-trivial. Given a point $P \in X$, with $f(P)=\eta \in S$ and $C=\psi(\eta) \subseteq G$, and a $q$-differential $0\neq\omega\in\Omega^{q}(X)_{\mathbf{1}}$, we have $\mathrm{ord}_{P}\omega\equiv-q\big(\mathrm{mod\ }o(C)\big)$. Moreover, if $\Delta$, $\Xi$ and $\rho$ are as in Lemma \ref{actCXrho} then the spaces $\Omega^{q}(\Xi)_{\chi}$ and $\Omega^{q}(\Delta)_{\chi\rho}$ are canonoically isomorphic. \label{dif-1}
\end{lem}

\begin{proof}
It is clear that $\Omega^{q}(X)_{\mathbf{1}}$ is $f^{*}\Omega^{q}(S)$, hence of dimension 1 over $\mathbb{C}(S)$, and via multiplication by a element of $\mathbb{C}(X)_{\chi}$ we obtain the space $\Omega^{q}(X)_{\chi}$ and its non-triviality. Writing our element $\omega\in\Omega^{q}(X)_{\mathbf{1}}$ as the pullback of a $q$-differential on $S$ and expanding it in local coordinates easily proves the assertion about the orders. It is also clear that for any $\Xi\in\mathrm{Div}(X)$ and any element $\chi\in\widehat{G}$, multiplication by $\omega\in\Omega^{q}(X)_{\mathbf{1}}$ takes the space $L(\Xi-\mathrm{div}(\omega))_{\chi}$ isomorphically onto $\Omega^{q}(\Xi)_{\chi}$, and that this map commutes with the isomorphism from Lemma \ref{actCXrho} to yield the desired one (just note that the divisors here are with a positive sign, so that multiplication by $h$ goes in the other direction). This proves the lemma.
\end{proof}

The fact that $f^{*}\Omega^{q}(S)$ covers all of $\Omega^{q}(X)_{\mathbf{1}}$ is related to the fact that all the divisors of elements of the latter space must be \emph{invariantly} linearly equivalent to one another, so that the difference between two such divisors is of the form $\mathrm{div}(f^{*}F)$ for $F\in\mathbb{C}(S)^{\times}$. The latter assertion generalizes as follows.
\begin{lem}
Every $0\neq\omega\in\Omega^{q}(X)$ for which $\mathrm{div}(\omega)$ is $G$-invariant lies in $\Omega^{q}(X)_{\chi}$ for some $\chi\in\widehat{G}$. In addition, for $P$, $\eta$, and $C$ as in Lemma \ref{dif-1} we have $\mathrm{ord}_{P}\omega\equiv-q-u_{\overline{\chi},C}\big(\mathrm{mod\ }o(C)\big)$. If $G$ is Abelian then the direct sum $\bigoplus_{\chi\in\widehat{G}}\Omega^{q}(X)_{\chi}$ is the whole space $\Omega^{q}(X)$, and we have $i^{q}(\Delta)=\sum_{\chi\in\widehat{G}}i^{q}_{\chi}(\Delta)$ wherever $\Delta$ is a $G$-invariant divisor on $X$. \label{chardiff}
\end{lem}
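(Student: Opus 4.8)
The plan is to mirror the structure of the proof of Lemma \ref{invlineq}, adapted from functions to $q$-differentials, and then to piece together the Abelian statement from the corresponding statement for functions. First I would treat the statement that a non-zero $q$-differential $\omega$ with $G$-invariant divisor lies in some $\Omega^{q}(X)_{\chi}$. Fix a non-zero $G$-invariant $q$-differential $\omega_{0}$ — for instance the pullback $f^{*}(\xi)$ of any non-zero meromorphic $q$-differential $\xi$ on $S$, which is $G$-invariant and whose existence is guaranteed by Lemma \ref{dif-1}. Then $h := \omega/\omega_{0}$ is a well-defined element of $\mathbb{C}(X)^{\times}$, and since both $\mathrm{div}(\omega)$ and $\mathrm{div}(\omega_{0})$ are $G$-invariant, so is $\mathrm{div}(h)=\mathrm{div}(\omega)/\mathrm{div}(\omega_{0})$. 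The key input is the statement, extracted inside the proof of Lemma \ref{invlineq}, that a meromorphic function on $X$ with $G$-invariant divisor must lie in $\mathbb{C}(X)_{\chi}$ for some $\chi\in\widehat{G}$; applying this to $h$ gives $h\in\mathbb{C}(X)_{\chi}$, and since $\omega_{0}$ is $G$-invariant, $\omega=h\omega_{0}\in\Omega^{q}(X)_{\chi}$.

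For the congruence on orders, I would combine Lemma \ref{dif-1} with Lemma \ref{charandord}. Write $\omega=h\omega_{0}$ as above with $h\in\mathbb{C}(X)_{\chi}$ and $\omega_{0}\in\Omega^{q}(X)_{\mathbf{1}}$. Then $\mathrm{ord}_{P}\omega=\mathrm{ord}_{P}h+\mathrm{ord}_{P}\omega_{0}$. By Lemma \ref{dif-1}, $\mathrm{ord}_{P}\omega_{0}\equiv -q \pmod{o(C)}$, and by Lemma \ref{charandord} (noting $\chi(C)=\zeta_{o(C)}^{u_{\chi,C}}$, so $\overline{\chi}(C)=\zeta_{o(C)}^{-u_{\chi,C}}$, and one must be careful about the sign convention matching the statement) one gets $\mathrm{ord}_{P}h\equiv u_{\chi,C}\pmod{o(C)}$. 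Here a small care is needed: the statement asks for $-q-u_{\overline{\chi},C}$, and since $u_{\overline{\chi},C}\equiv -u_{\chi,C}\pmod{o(C)}$ (as $\overline\chi(C)$ is the inverse of $\chi(C)$), the two expressions agree modulo $o(C)$, so the claimed congruence $\mathrm{ord}_{P}\omega\equiv -q-u_{\overline{\chi},C}\pmod{o(C)}$ follows directly. I would double-check that the convention for how $\chi$ acts on $h$ versus on $\omega$ is consistent with the convention fixed before Lemma \ref{dif-1}; this bookkeeping is the only delicate point.

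Finally, for the Abelian case: when $G$ is Abelian, $L\big(\tfrac{\Xi}{\mathrm{div}(\omega_0)}\big)$ (equivalently, all of $\mathbb{C}(X)$ after clearing the fixed differential $\omega_{0}$) decomposes as the direct sum of its $\chi$-isotypic pieces over $\chi\in\widehat{G}$, since $\widehat{G}$ exhausts the irreducible representations of $G$ (this is exactly the content used in Lemma \ref{decom}). Multiplication by the fixed $G$-invariant $q$-differential $\omega_{0}$ is a $G$-equivariant isomorphism from $\mathbb{C}(X)$ onto $\Omega^{q}(X)$ (and, for a $G$-invariant divisor $\Delta$, from $L\big(\tfrac{\mathrm{div}(\omega_{0})\Delta}{\mathrm{div}(\omega_{0})}\big)$... more precisely from the appropriate $L$-space onto $\Omega^{q}(\Delta)$, as recorded in the proof of Lemma \ref{dif-1}). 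Transporting the isotypic decomposition of $\mathbb{C}(X)$ (resp. of the relevant $L$-space) across this isomorphism gives $\Omega^{q}(X)=\bigoplus_{\chi\in\widehat{G}}\Omega^{q}(X)_{\chi}$ and $\Omega^{q}(\Delta)=\bigoplus_{\chi\in\widehat{G}}\Omega^{q}(\Delta)_{\chi}$, and taking dimensions yields $i^{q}(\Delta)=\sum_{\chi\in\widehat{G}}i^{q}_{\chi}(\Delta)$. The only real obstacle is the sign/convention bookkeeping in the second assertion; everything else is a direct transcription of the function-case arguments via the fixed base differential $\omega_{0}$.
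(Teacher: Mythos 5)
Your proposal is correct and follows essentially the same route as the paper: the quotient $\omega/\omega_{0}$ (or $\omega/h_{\chi}$) reduces the first two assertions to Lemma \ref{invlineq}, Lemma \ref{charandord} (equivalently Corollary \ref{divgens}), and Lemma \ref{dif-1}, with the same $u_{\overline{\chi},C}\equiv-u_{\chi,C}\pmod{o(C)}$ bookkeeping that the paper invokes. The Abelian case is likewise the argument of Lemma \ref{decom}, whether applied directly to $\Omega^{q}(\Delta)$ as in the paper or transported from the corresponding $L$-space via the $G$-equivariant multiplication by $\omega_{0}$ as you do.
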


\begin{proof}
The fact that all the $q$-canonical divisors on $X$ are linearly equivalent combines with Lemma \ref{invlineq} to establish the first assertion. Now, if $\omega\in\Omega^{q}(X)_{\chi}$ then write it as $h_{\chi}\cdot\frac{\omega}{h_{\chi}}$ with $\frac{\omega}{h_{\chi}}\in\Omega^{q}(X)_{\mathbf{1}}$, and the second assertion now follows from Proposition \ref{divgens} and Lemmas \ref{tuchichibar} and \ref{dif-1}. The third assertion is proved just like Lemma \ref{decom}. This proves the lemma.
\end{proof}
The case $q=1$ in the last assertion of Lemma \ref{chardiff} generalizes the other part of Proposition 1.2 of \cite{[Z]} to this setting.

\smallskip

We can now construct a generator $\omega_{\chi,q}$ of $\Omega^{q}(X)_{\chi}$ (up to scalars) for every character $\chi\in\widehat{G}$. Given such $q$ and  $\chi$ and a conjugacy class $C \subseteq G$, we write
\begin{equation}
q\big(o(C)-1\big)-u_{\overline{\chi},C}=\alpha_{C,\overline{\chi}}^{q}o(C)+\beta_{C,\overline{\chi}}^{q}\quad\mathrm{with}\quad\alpha_{C,\overline{\chi}}^{q}\in\mathbb{Z}\ \mathrm{and}\ 0\leq\beta_{C,\overline{\chi}}^{q}<o(C). \label{multqdifs}
\end{equation}
Note that $\alpha_{Id_{X},\chi}^{q}=\beta_{Id_{X},\chi}^{q}=0$ for every $q$ and $\chi$, so that these numbers are interesting only for non-trivial classes. Let $s_{\overline{\chi},q}$ be the maximal number $s$ such that $i^{q}\Big(\Upsilon_{\overline{\chi}}+s\nu-\sum_{C \neq Id_{X}}\sum_{j=1}^{r_{C}}\alpha_{C,\overline{\chi}}^{q}\eta_{C,j}\Big)\geq1$. Hence by setting $s=s_{\overline{\chi},q}$ the latter dimension is 1, so let $\varpi_{\chi,q}$ be any non-zero $q$-differential in the associated space. This determines $\varpi_{\chi,q}$ up to scalar multiples, and we get that
\begin{equation}
\mathrm{div}(\varpi_{\chi,q})=\Upsilon_{\overline{\chi}}+s_{\overline{\chi},q}\nu+\widetilde{\Upsilon}_{\chi,q}-\sum_{C \neq Id_{X}}\sum_{j=1}^{r_{C}}\alpha_{C,\overline{\chi}}^{q}\eta_{C,j} \label{varpichiq}
\end{equation}
for some positive divisor $\widetilde{\Upsilon}_{\chi,q}$. Since the degree of any $q$-canonical divisor on $S$ must be $q(2g_{S}-2)$, it follows immediately from Equation \eqref{varpichiq} that
\begin{equation}
\deg\widetilde{\Upsilon}_{\chi,q}=q(2g_{S}-2)-s_{\overline{\chi},q}-\deg\Upsilon_{\overline{\chi}}+\sum_{C \neq Id_{X}}r_{C}\alpha_{C,\overline{\chi}}^{q}. \label{degUpschiq}
\end{equation}
As with the spaces, when $q=1$ the index will be omitted (so that $\varpi_{\chi}$ and $\widetilde{\Upsilon}_{\chi}$ stand for $\varpi_{\chi,1}$ and $\widetilde{\Upsilon}_{\chi,1}$ respectively).
\begin{prop}
The space $\Omega^{q}(X)_{\chi}$ with $\chi\in\widehat{G}$ is spanned over $\mathbb{C}(S)$ by the $q$-differential $\omega_{\chi,q}=f^{*}(\varpi_{\chi,q})/h_{\overline{\chi}}$, where $h_{\overline{\chi}}$ is the function from Proposition \ref{divgens}. This $q$-differential has the normalized $q$-canonical divisor \[f^{*}\!\bigg[\widetilde{\Upsilon}_{\chi,q}+\!\bigg(q(2g_{S}-2)+t_{\overline{\chi}}-\deg\widetilde{\Upsilon}_{\chi,q}+\!\!\sum_{C \neq Id_{X}}\!\!r_{C}\alpha_{C,\overline{\chi}}^{q}\bigg)\!\cdot\nu\bigg]\!+\!\sum_{C \neq Id_{X}}\sum_{j=1}^{r_{C}}\sum_{\upsilon=1}^{\frac{n}{o(C)}}\beta_{C,\overline{\chi}}^{q}P_{C,j,\upsilon},\] where $t_{\overline{\chi}}$ is given in Equation \eqref{uchiCtchi}. In particular, the (normalized) divisor of the generator $\omega_{\chi}=\omega_{\chi,1}$ of
$\Omega(X)_{\chi}$ is \[f^{*}\big(\widetilde{\Upsilon}_{\chi}+(2g_{S}-2+t_{\overline{\chi}}-\deg\widetilde{\Upsilon}_{\chi})\cdot\nu\big)+\sum_{C \neq Id_{X}}\sum_{j=1}^{r_{C}}\sum_{\upsilon=1}^{\frac{n}{o(C)}}\big(o(C)-1-u_{\overline{\chi},C}\big)P_{C,j,\upsilon}.\] \label{normdif}
\end{prop}

\begin{proof}
The proof of Lemma \ref{chardiff} shows that $\omega_{\chi,q}$ is a non-zero element of $\Omega^{q}(X)_{\chi}$, hence it spans this space over $\mathbb{C}(S)$. Equation \eqref{varpichiq} and the proof of Lemma \ref{dif-1} now imply that $\mathrm{div}(f^{*}\varpi_{\chi,q})$ is $f^{*}\big(\Upsilon_{\overline{\chi}}+s_{\overline{\chi},q}\nu+\widetilde{\Upsilon}_{\chi,q}-\sum_{C \neq Id_{X}}\sum_{j=1}^{r_{C}}\alpha_{C,\overline{\chi}}^{q}\eta_{C,j}\big)$ plus the terms $q(o(C)-1)P_{C,j,\upsilon}$ for every $C$, $j$, and $\upsilon$. When subtracting the divisor $\mathrm{div}(h_{\overline{\chi}})$ given in Proposition \ref{divgens}, the two instances of $f^{*}\Upsilon_{\overline{\chi}}$ cancel, the terms involving multiples of $P_{C,j,\upsilon}$ (including the multiple of $f^{*}\eta_{C,j}$) directly produce $\beta_{C,\overline{\chi}}^{q}P_{C,j,\upsilon}$ by Equation \eqref{multqdifs}, and the remaining terms combine to the expression $f^{*}\big(\widetilde{\Upsilon}_{\chi,q}+(s_{\overline{\chi},q}+\deg\Upsilon_{\overline{\chi}}+t_{\overline{\chi}})\cdot\nu\big)$. Equation \eqref{degUpschiq} thus shows that the multiplicity of $f^{*}\nu$ here is indeed the asserted one, and we need to verify that this divisor is normalized. Proposition \ref{norminvdiv} then reduces us to verifying that $\widetilde{\Upsilon}_{\chi,q}$ is a non-special divisor on $S\setminus\{\nu\}$. Positivity is clear, and the fact that $\nu$ is not contained in the support of that divisor follows from the maximality of $s_{\overline{\chi},q}$ (since otherwise $\varpi_{\chi,q}$ would be in the space with $(s_{\overline{\chi},q}+1)\nu$). But this maximality also proves that $r(-\widetilde{\Upsilon}_{\chi,q})=1$ in the same manner: The proof of Lemma \ref{divSinfty} shows that otherwise we could replace $\widetilde{\Upsilon}_{\chi,q}$ by a linearly equivalent integral divisor containing $\nu$ in its support, thus producing a non-zero element of the space with $(s_{\overline{\chi},q}+1)\nu$. Hence $\mathrm{div}(\omega_{\chi,q})$ is normalized, establishing the result for general $q$. The assertion for $q=1$ easily follows since it is clear from Equation \eqref{multqdifs} that $\alpha_{C,\overline{\chi}}^{1}=0$ and $\beta_{C,\overline{\chi}}^{1}=o(C)-1-u_{\overline{\chi},C}$ for every $\chi$ and $C$. This completes the proof of the proposition.
\end{proof}

\smallskip

We now turn to proving the following analogue of Proposition \ref{L1/Deltachi} for differentials. We remark that one can use the same argument for proving a similar assertion for $q$-differentials with arbitrary $q$. However, as working in that generality makes the notation much more complicated, we content ourselves with the case $q=1$ here.
\begin{prop}
Take a normalized $G$-invariant divisor $\Delta$ written as in Equation \eqref{normDelgen} and a character $\chi\in\widehat{G}$, and let $t_{\chi}$ be as above and $A_{C,\chi}$ be the sets from Equation \eqref{ACchi}. Then $i_{\chi}(\Delta)$ coincides with \[i\bigg(\Upsilon-\Upsilon_{\chi}+(p+t_{\chi}-\deg\Upsilon+\deg\Upsilon_{\chi})\cdot\nu-\sum_{C \subseteq G\setminus\ker\chi}\sum_{j \not\in A_{C,\overline{\chi}}}\eta_{C,j}\bigg).\] \label{Omega1/Deltachi}
\end{prop}

\begin{proof}
As in the proof of Proposition \ref{L1/Deltachi}, we apply the arguments from Lemmas \ref{actCXrho} and \ref{dif-1} in order to show that $i_{\chi}(\Delta)$ equals $i_{\mathbf{1}}(\Delta-\mathrm{div}(h_{\chi}))$. Denote the asserted argument of $i$ here by $\Gamma$, and set $\Xi$ to be $\Delta-\mathrm{div}(h_{\chi})$. Proposition \ref{divgens} and Equation \eqref{normDelgen} evaluate $\Xi$ as $f^{*}\Gamma$ plus some multiples of the points $P_{C,j,\upsilon}$. If $\eta_{C,j} \in B_{C,i}$ then this multiplicity is $o(C)-1-i-u_{\chi,C}$, and in case $j \not\in A_{C,\overline{\chi}}$ we have to add $o(C)$. Note that by Equation \eqref{ACchi} we add $o(C)$ if and only if $\chi(C)\neq1$ and $i \geq u_{\overline{\chi},C}$, so that by Lemma \ref{tuchichibar} this means $i \geq o(C)-u_{\chi,C}$. This means (when one checks the cases $u_{\chi,C}=0$ and $u_{\chi,C}>0$ separately) that we add $o(C)$ precisely when the multiplicity in question is negative, so that the total multiplicity of $P_{C,j,\upsilon}$ that we investigate is between 0 and $o(C)-1$. Hence we are in the setting where $\Xi\in\mathrm{Div}(X)$ is of the form $f^{*}\Gamma+\sum_{C \neq Id_{X}}\sum_{j=1}^{r_{C}}\sum_{\upsilon=1}^{n/o(C)}l_{C,j}P_{C,j,\upsilon}$ with $0 \leq l_{C,j} \leq o(C)-1$. But Lemma \ref{dif-1} combines with the proof of Lemma \ref{nonspres} to show that in this case an element $f^{*}\omega\in\Omega(X)_{\mathbf{1}}$ with $\omega\in\Omega(S)$ lies in $\Omega(\Xi)$ (hence in $\Omega(\Xi)_{\mathbf{1}}$) if and only if $\omega\in\Omega(\Gamma)$. This proves the proposition.
\end{proof}
For arbitrary $q$-differentials, the proof of Proposition \ref{Omega1/Deltachi} identifies $i^{q}_{\chi}(\Delta)$ (or $i^{q}_{\mathbf{1}}(\Delta-\mathrm{div}(h_{\chi}))$) with the number $i^{q}(\Sigma)$ for a divisor $\Sigma\in\mathrm{Div}(S)$ for which $v_{\eta_{C,j}}(\Sigma)$ is the sum of $v_{\eta_{C,j}}(\Upsilon)$ for $\Upsilon$ from Equation \eqref{normDelgen}, $\alpha_{C,\overline{\chi}}^{q}$ from Equation \eqref{multqdifs}, and a number from $\{0,1\}$ that depends on the relation between the index $i$ for which $\eta_{C,j} \in B_{C,i}$ and the parameter $\beta_{C,\overline{\chi}}^{q}$ from Equation \eqref{multqdifs}. We do not write the general formula, since it becomes cumbersome without additional, ad-hoc notation extending Equation \eqref{ACchi}. An alternative point of view to the case $q=1$ from Proposition \ref{Omega1/Deltachi} is acquired by showing, via evaluating the required divisors, that the space $\Omega(\Delta)_{\chi}$ itself consists of those differentials that are obtained from $\omega_{\chi}$ via multiplication by functions from
\begin{equation}
f^{*}L\bigg(\Upsilon-\widetilde{\Upsilon}_{\chi}-(2g_{S}-2+t_{\overline{\chi}}-p+\deg\Upsilon-\deg\widetilde{\Upsilon}_{\chi})\cdot\nu+\sum_{C \neq Id_{X}}\sum_{j \in A_{C,\overline{\chi}}}\eta_{C,j}\bigg). \label{OmegaDeltaChi}
\end{equation}
In particular, we get that $\Omega(\Delta)_{\mathbf{1}}$ is just $f^{*}\Omega\big(\Upsilon+(p-\deg\Upsilon)\cdot\nu\big)$. Equation \eqref{OmegaDeltaChi} also extends to general $q$, but with a much more complicated formula, which does not simplify much also for $\chi=\mathbf{1}$.

\smallskip

As in the case of functions, Proposition \ref{Omega1/Deltachi} and Equation \eqref{OmegaDeltaChi} produce more complete results also about differentials in case the Galois group $G$ of the cover $f:X \to S$ is Abelian.
\begin{cor}
If $G$ is Abelian and $\Delta$ is a $G$-invariant divisor with the usual parameters then
$i(\Delta)$ is \[\sum_{\chi\in\widehat{G}}i\bigg(\Upsilon-\Upsilon_{\chi}+(p+t_{\chi}-\deg\Upsilon+\deg\Upsilon_{\chi})\cdot\nu-\sum_{C \subseteq G\setminus\ker\chi}\sum_{j \not\in A_{C,\overline{\chi}}}\eta_{C,j}\bigg),\]
which also equals \[\sum_{\chi\in\widehat{G}}r\bigg(\Upsilon-\widetilde{\Upsilon}_{\chi}-(2g_{S}-2+t_{\overline{\chi}}-p+\deg\Upsilon-\deg\widetilde{\Upsilon}_{\chi})\cdot\nu+\sum_{C \neq Id_{X}}\sum_{j \in A_{C,\overline{\chi}}}\eta_{C,j}\bigg).\] \label{diffGab}
\end{cor}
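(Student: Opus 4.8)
The plan is to reduce the statement directly to results already in hand, with essentially no new computation. Since $G$ is Abelian, the last assertion of Lemma \ref{chardiff} gives the decomposition $\Omega(X)=\bigoplus_{\chi\in\widehat{G}}\Omega(X)_{\chi}$; intersecting with $\Omega(\Delta)$ for the $G$-invariant divisor $\Delta$ yields $\Omega(\Delta)=\bigoplus_{\chi\in\widehat{G}}\Omega(\Delta)_{\chi}$, and hence $i(\Delta)=\sum_{\chi\in\widehat{G}}i_{\chi}(\Delta)$. Everything then comes down to inserting, for the individual summand $i_{\chi}(\Delta)$, the two available expressions for it.

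For the first displayed formula I would substitute the value of $i_{\chi}(\Delta)$ computed in Proposition \ref{Omega1/Deltachi}, namely $i\big(\frac{\Upsilon}{\Upsilon_{\chi}}\nu^{p+t_{\chi}-\deg(\Upsilon/\Upsilon_{\chi})}\big/\prod_{C \subseteq G\setminus\ker\chi}\prod_{j \not\in A_{C,\overline{\chi}}}\eta_{C,j}\big)$, and sum over $\chi\in\widehat{G}$. For the second formula I would instead use the alternative, direct description of the space $\Omega(\Delta)_{\chi}$ recorded in the paragraph following the proof of Proposition \ref{Omega1/Deltachi}, which identifies $\Omega(\Delta)_{\chi}$ with $L\big(\frac{\Upsilon/\widetilde{\Upsilon}_{\chi}}{\nu^{2g_{S}-2+t_{\overline{\chi}}-p+\deg(\Upsilon/\widetilde{\Upsilon}_{\chi})}}\prod_{\{Id_{X}\} \neq C \subseteq G}\prod_{j \in A_{C,\overline{\chi}}}\eta_{C,j}\big)\cdot\omega_{\chi}$, where $\omega_{\chi}$ is the fixed generator of $\Omega(X)_{\chi}$ from Proposition \ref{normdif}. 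Multiplication by the nonzero differential $\omega_{\chi}$ is a $\mathbb{C}$-linear isomorphism onto its image, so $i_{\chi}(\Delta)$ equals the $r$-dimension of that $L$-space; summing over $\chi\in\widehat{G}$ produces the second expression, and the equality of the two sums follows termwise.

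I expect no genuine obstacle: the content is assembled entirely from Lemma \ref{chardiff}, Proposition \ref{Omega1/Deltachi}, and the explicit formulas of Proposition \ref{normdif}. The only points that need care are notational — checking that $\chi$ versus its conjugate $\overline{\chi}$, and the divisors $\Upsilon_{\chi}$ versus $\widetilde{\Upsilon}_{\chi}$, are used consistently with those references, and that letting $\chi$ range over all of $\widehat{G}$ in either formula is legitimate (it is, since $\chi\mapsto\overline{\chi}$ is a bijection of $\widehat{G}$). One could alternatively read the termwise equality of the two summands as an instance of the Riemann--Roch theorem on $S$, but this is unnecessary once the explicit description of $\Omega(\Delta)_{\chi}$ is invoked.
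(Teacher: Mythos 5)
Your proposal is correct and follows exactly the paper's route: the paper also deduces the corollary from the last assertion of Lemma \ref{chardiff} (giving $i(\Delta)=\sum_{\chi}i_{\chi}(\Delta)$ for Abelian $G$), Proposition \ref{Omega1/Deltachi} for the first expression, and the explicit description of $\Omega(\Delta)_{\chi}$ in the paragraph following that proposition for the second. Your attention to the $\chi$ versus $\overline{\chi}$ bookkeeping and the distinction between $\Upsilon_{\chi}$ and $\widetilde{\Upsilon}_{\chi}$ is exactly the care the argument requires.
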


\begin{proof}
The first expression is obtained by substituting the expressions from Proposition \ref{Omega1/Deltachi} into the case $q=1$ of the last assertion of Lemma \ref{chardiff}. For the second expression we do the same using the dimensions of the spaces from Equation \eqref{OmegaDeltaChi}. This proves the corollary.
\end{proof}

In the case where $S=\mathbb{P}^{1}(\mathbb{C})$ and $\nu=\infty$ we would expect the canonical choice $dz$ for a differential on $S$, as well as its $f^{*}$-image on $X$, to play a role (the latter is the differential of the meromorphic function $f^{*}z$, but we shall stick with the notation $f^{*}dz$). Indeed, in this case the results of Propositions \ref{normdif} and \ref{Omega1/Deltachi} and Corollary \ref{diffGab} take the following form.
\begin{cor}
If $f:X\to\mathbb{P}^{1}(\mathbb{C})$ is a Galois cover with Galois group $G$ and $\chi\in\widehat{G}$ then the normalized generator $\omega_{\chi,q}$ for $\Omega^{q}(X)_{\chi}$ over $\mathbb{C}(S)=\mathbb{C}(z)$ has the divisor
\[\sum_{C \neq Id_{X}}\sum_{j=1}^{r_{C}}\sum_{\upsilon=1}^{\frac{n}{o(C)}}\beta_{C,\overline{\chi}}^{q}P_{C,j,\upsilon}+
\bigg(t_{\overline{\chi}}-2q+\sum_{C \neq Id_{X}}r_{C}\alpha_{C,\overline{\chi}}^{q}\bigg)\sum_{\upsilon=1}^{n}\infty_{\upsilon}.\]
For $q=1$ we have that $\omega_{\chi}=\omega_{\chi,1}$ is $f^{*}dz/h_{\overline{\chi}}$, with divisor \[\sum_{C \neq Id_{X}}\sum_{j=1}^{r_{C}}\sum_{\upsilon=1}^{\frac{n}{o(C)}}\big(o(C)-1-u_{\overline{\chi},C}\big)P_{C,j,\upsilon}+(t_{\overline{\chi}}-2)\sum_{\upsilon=1}^{n}\infty_{\upsilon}.\] If $\Delta$ is a divisor as in Proposition \ref{Omega1/Deltachi} then the dimension $i_{\chi}(\Delta)$ is the number $\max\{0,t_{\overline{\chi}}-\sum_{C \neq Id_{X}}|A_{C,\overline{\chi}}|-p-1\}$ in the notation of that proposition, and if $G$ is Abelian then $i(\Delta)$ equals $\sum_{\chi\in\widehat{G}}\max\{0,t_{\overline{\chi}}-\sum_{C \neq Id_{X}}|A_{C,\overline{\chi}}|-p-1\}$. \label{P1Cdiff}
\end{cor}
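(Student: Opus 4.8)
The plan is to obtain everything by specializing Propositions \ref{normdif} and \ref{Omega1/Deltachi} and Corollary \ref{diffGab} to the case $S=\mathbb{P}^{1}(\mathbb{C})$, $\nu=\infty$, using the simplifications recorded in Corollary \ref{P1Cnorm}. The crucial point is that, since $g_{S}=0$, the only divisor on $S$ satisfying the conditions of Lemma \ref{divSinfty} is the trivial one, so the divisors $\Upsilon_{\overline{\chi}}$, $\widetilde{\Upsilon}_{\chi,q}$ (and $\Upsilon$) occurring in those statements are all trivial and $\deg\widetilde{\Upsilon}_{\chi,q}=0$; moreover $f^{*}(\infty)=z^{*}(\infty)=\prod_{\upsilon=1}^{n}\infty_{\upsilon}$ and $2g_{S}-2=-2$. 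Substituting these into the divisor formula of Proposition \ref{normdif}, the $f^{*}$-part collapses to $\prod_{\upsilon}\infty_{\upsilon}$ raised to $q(2g_{S}-2)+t_{\overline{\chi}}-\deg\widetilde{\Upsilon}_{\chi,q}+\sum_{C}r_{C}\alpha_{C,\overline{\chi}}^{q}=t_{\overline{\chi}}-2q+\sum_{C}r_{C}\alpha_{C,\overline{\chi}}^{q}$, which gives the first displayed divisor; the $q=1$ case follows at once from $\alpha_{C,\overline{\chi}}^{1}=0$ and $\beta_{C,\overline{\chi}}^{1}=o(C)-1-u_{\overline{\chi},C}$.

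For the identification $\omega_{\chi,1}=dz/h_{\overline{\chi}}$ I would check that the differential $\varpi_{\chi,1}$ of Proposition \ref{normdif} may be taken to be $dz$. With $\Upsilon_{\overline{\chi}}$ trivial and $\alpha_{C,\overline{\chi}}^{1}=0$, the number $s_{\overline{\chi},1}$ is the largest $s$ for which $\Omega(\infty^{s})$ on $\mathbb{P}^{1}(\mathbb{C})$ is non-trivial; writing a meromorphic differential on $\mathbb{P}^{1}(\mathbb{C})$ as $r(z)\,dz$ with $r$ rational and $\mathrm{div}(r\,dz)=\mathrm{div}(r)-2\infty$, this space is non-trivial exactly when $\deg r\leq -s-2$, so $s_{\overline{\chi},1}=-2$ and the corresponding $\Omega$-space is spanned by $dz$. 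Hence $\varpi_{\chi,1}$ is a scalar multiple of $dz$ and $\omega_{\chi,1}=f^{*}(\varpi_{\chi,1})/h_{\overline{\chi}}=dz/h_{\overline{\chi}}$, with the divisor obtained by plugging $q=1$ into the formula just established.

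For the dimension $i_{\chi}(\Delta)$ I would use the ``alternative'' description given right after Proposition \ref{Omega1/Deltachi}, namely $\Omega(\Delta)_{\chi}=L\big(\frac{\Upsilon/\widetilde{\Upsilon}_{\chi}}{\nu^{2g_{S}-2+t_{\overline{\chi}}-p+\deg(\Upsilon/\widetilde{\Upsilon}_{\chi})}}\prod_{\{Id_{X}\}\neq C\subseteq G}\prod_{j\in A_{C,\overline{\chi}}}\eta_{C,j}\big)\cdot\omega_{\chi}$. Specializing ($\Upsilon=\widetilde{\Upsilon}_{\chi}=1$, $2g_{S}-2=-2$) turns the argument of $L$ into $\nu^{2-t_{\overline{\chi}}+p}\prod_{C}\prod_{j\in A_{C,\overline{\chi}}}\lambda_{C,j}$, so, exactly as in the proof of Corollary \ref{funspP1C}, $L$ of this divisor consists of $\prod_{C}\prod_{j\in A_{C,\overline{\chi}}}(z-\lambda_{C,j})$ times a polynomial of degree at most $t_{\overline{\chi}}-p-2-\sum_{C}|A_{C,\overline{\chi}}|$, hence has dimension $\max\{0,t_{\overline{\chi}}-\sum_{\{Id_{X}\}\neq C\subseteq G}|A_{C,\overline{\chi}}|-p-1\}$. (One may instead substitute directly into the $i(\cdot)$-formula of Proposition \ref{Omega1/Deltachi} and use the identity $t_{\chi}+t_{\overline{\chi}}=\sum_{\chi(C)\neq1}r_{C}$, which comes from $u_{\chi,C}+u_{\overline{\chi},C}=o(C)$ whenever $\chi(C)\neq1$, to match the two expressions.) Finally, for Abelian $G$ the equality $i(\Delta)=\sum_{\chi\in\widehat{G}}i_{\chi}(\Delta)$ from Lemma \ref{chardiff} (or Corollary \ref{diffGab}) gives the last assertion.

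The main obstacle I anticipate is nothing more than careful bookkeeping of the exponents of $\infty_{\upsilon}$ and of the $P_{C,j,\upsilon}$ when substituting into Proposition \ref{normdif}, together with pinning down $s_{\overline{\chi},q}$ and $\varpi_{\chi,q}$ on $\mathbb{P}^{1}(\mathbb{C})$; this is transparent for $q=1$ but requires a little more care for general $q$ because $\varpi_{\chi,q}$ is allowed finite poles at the $\lambda_{C,j}$ governed by the $\alpha_{C,\overline{\chi}}^{q}$. No conceptual difficulty remains beyond the genus-0 reductions already in place.
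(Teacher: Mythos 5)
Your proposal is correct and follows essentially the same route as the paper: specialize Propositions \ref{normdif} and \ref{Omega1/Deltachi} using the genus-0 simplifications of Corollary \ref{P1Cnorm} (trivial $\Upsilon$'s), identify $\varpi_{\chi,1}=dz$ via $s_{\overline{\chi},1}=-2$, and reduce the dimension count to a space of polynomials, summing over characters via Lemma \ref{chardiff} in the Abelian case. The only cosmetic difference is that you lead with the ``alternative'' $L$-space description of $\Omega(\Delta)_{\chi}$ while the paper leads with the $i(\cdot)$-formula and the identity $t_{\chi}+t_{\overline{\chi}}=\sum_{\chi(C)\neq1}r_{C}$, but you note both and they are the same computation.
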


\begin{proof}
For the first assertion we argue as in Proposition \ref{normdif}, recalling from Corollary \ref{P1Cnorm} that $\Upsilon_{\overline{\chi}}=\widetilde{\Upsilon}_{\chi}=0_{S}$ when $g_{S}=0$, and substituting $f^{*}\nu=f^{*}\infty$ as $\sum_{\upsilon=1}^{n}\infty_{\upsilon}$. The formula for $\mathrm{div}(\omega_{\chi})$ follows in the same way (by what we know about $\alpha_{C,\overline{\chi}}^{1}$ and $\beta_{C,\overline{\chi}}^{1}$), or alternatively by combining Corollary \ref{P1Cnorm} and the proof of Lemma \ref{dif-1}. Proving the formula for that differential itself amounts to showing that $\varpi_{\chi}=\varpi_{\chi,1}=dz$ for every $\chi\in\widehat{G}$. But this follows immediately from the definition of $\varpi_{\chi}$, since $\Upsilon_{\overline{\chi}}=0_{S}$, $\alpha_{C,\overline{\chi}}^{1}=0$ in Equation \eqref{multqdifs}, and the space $\Omega(-2\cdot\infty)$ is 1-dimensional on $\mathbb{P}^{1}(\mathbb{C})$, spanned by $dz$. Since $\Omega(-1\cdot\infty)=\{0\}$, this also shows that $s_{\overline{\chi}}=-2$ for every such $\chi$.

For the third assertion we follow the proof of Proposition \ref{Omega1/Deltachi}, combined with Corollary \ref{P1Cnorm} once more. Once again we substitute $\Upsilon=\Upsilon_{\chi}=0_{S}$, and since we can replace each point $\eta_{C,j}$ by the linearly equivalent point $\infty$, the dimension in question coincides with $i\big((p+t_{\chi})\cdot\infty^{}-\sum_{C \subseteq G\setminus\ker\chi}\sum_{j \not\in A_{C,\overline{\chi}}}\infty\big)$. We now observe that if we subtract $\sum_{C \neq Id_{X}}|A_{C,\overline{\chi}}|$ from the total multiplicity of $\infty$ here then the terms with $\chi(C)=1$ have no effect (since we have seen that $A_{C,\overline{\chi}}$ is empty then), and the remaining terms involving the cardinalities $|A_{C,\overline{\chi}}|$ combine to $\sum_{C \subseteq G\setminus\ker\chi}r_{C}$. But Lemma \ref{tuchichibar} shows that the latter expression is $t_{\chi}+t_{\overline{\chi}}$, so that we are interested in $i\big[\big(p+\sum_{C \neq Id_{X}}|A_{C,\overline{\chi}}|-t_{\overline{\chi}}\big)\cdot\infty\big]$. This dimension can be evaluated either through dividing the relevant differentials by $dz$ or via the Riemann--Roch Theorem, using our knowledge about the spaces $L(-d\cdot\infty)$. This establishes the third assertion, and the fourth one then follows, as in Corollary \ref{diffGab}, from the last assertion of Lemma \ref{chardiff}. This proves the corollary.
\end{proof}

Corollary \ref{diffGab} and the last two assertions of Corollary \ref{P1Cdiff} also have extensions to arbitrary $q$, with the same properties as with Proposition \ref{Omega1/Deltachi}. Let us now demonstrate the complications arising in this extension by considering the first assertion of Corollary \ref{P1Cdiff} with $\chi=\mathbf{1}$. The $q$-differential $\varpi_{\mathbf{1},q}$ on $S=\mathbb{P}^{1}(\mathbb{C})$ is no longer $(dz)^{q}$, but $(dz)^{q}$ divided by the terms $(z-\eta_{C,j})^{\alpha_{C,\mathbf{1}}^{q}}$ for each $C$ and $j$, and the coefficient $\alpha_{C,\mathbf{1}}^{q}=\big\lfloor q-\frac{q}{o(C)} \big\rfloor$ from Equation \eqref{multqdifs} vanishes only for $q\in\{0,1\}$. On the other hand, the second assertion of Corollary \ref{P1Cdiff} implies that $\omega_{\mathbf{1}}=\omega_{\mathbf{1},1}$ is just $f^{*}dz$, and its divisor is already evaluated implicitly in the proof of that corollary. We also remark that taking $p\geq0$ in the latter two statements of Corollary \ref{P1Cdiff} generalizes Corollary 1.4 of \cite{[Z]} to this setting.

In fact, combining the argument proving Equation \eqref{OmegaDeltaChi} with Corollary \ref{P1Cnorm} and the proof of Corollary \ref{P1Cdiff} proves the following result: Set $\tilde{d}(\chi)$ to be the number $\max\{-1,t_{\overline{\chi}}-\sum_{C \neq Id_{X}}|A_{C,\overline{\chi}}|-p-2\}$, and then the space $\Omega(\Delta)_{\chi}$ is just $\prod_{C \neq Id_{X}}\prod_{\{j|\lambda_{C,j} \in A_{C,\overline{\chi}}\}}(f^{*}z-\eta_{C,j})\cdot\mathcal{P}_{\leq\tilde{d}(\chi)}(f^{*}z)\omega_{\chi}$ when $S=\mathbb{P}^{1}(\mathbb{C})$. In particular, the space $\Omega(\Delta)_{\mathbf{1}}$ is then $\mathcal{P}_{\leq\tilde{d}(\mathbf{1})}(z)dz$ with $\tilde{d}(\mathbf{1})=\max\{-1,-p-2\}$, of dimension $\max\{0,-p-1\}$. As non-special positive divisors $\Delta$ of degree $g_{X}$ and non-positive divisors $\Delta$ of degree $g_{X}-1$ that are not linearly equivalent to any positive divisor are both described by the condition $i(\Delta)=0$, the case $q=1$ of Corollary \ref{P1Cdiff} can be used to give an alternative proof for Theorems \ref{intnonsp} and \ref{eqnointg-1}. The fact that only $p=-1$ has to be considered in such a proof for Theorem \ref{eqnointg-1}, visible from the formula $\max\{0,-p-1\}$ for $i_{\mathbf{1}}(\Delta)$ and the negativity of $p$ from $r(-\Delta)=0$, corresponds to the fact that when $p\leq-2$ the differential $f^{*}dz$ lies in $\Omega(\Delta)_{\mathbf{1}}\subseteq\Omega(\Delta)$.

\smallskip

In the case where $X$ is a $Z_{n}$ curve, the elements $\sigma \in G$ are the powers $\tau^{\alpha}$ of $\tau$ with $\alpha\in\mathbb{Z}/n\mathbb{Z}$, and elements $\chi\in\widehat{G}$ are all powers $\phi^{-k}$ of the generator $\phi$ of $\widehat{G}$ sending $\tau$ to $\zeta_{n}$. The associated differential $\omega_{\chi}=\omega_{\chi,1}$ is the one denoted by $\omega_{k}$ in \cite{[Z]} (this is true for $\chi=\phi^{-k}$ since $w^{k}$ is in the denominator in that reference), and the combination $\alpha k-ns_{\alpha,k}$ appearing with a negative sign in Equation (2) of \cite{[Z]} is just our $u_{\overline{\chi},\sigma}$ with these $\chi$ and $\sigma$. Since in the fully ramified case considered in \cite{[Z]} all the elements $\sigma$ for which $r_{\sigma}$ is positive are powers of $\tau$ with exponents $\alpha\in\mathbb{Z}/n\mathbb{Z}$ that are co-prime to $n$, the order $o(\sigma)$ is always $n$. Therefore the second assertion of Corollary \ref{P1Cdiff} extends the validity of Equation (2) of \cite{[Z]} to this much more general setting. We note that the evaluations from the proof of Theorems \ref{intnonsp} and \ref{eqnointg-1} imply that $\mathrm{div}(\omega_{\chi})$ has the required degree $2g_{X}-2$ for any $\chi\in\widehat{G}$ wherever $X$ is an Abelian cover of $\mathbb{P}^{1}(\mathbb{C})$.

\section{Representations on Spaces of $q$-Differentials \label{RepQDiff}}

This section applies the results of the previous section for ($G$-invariant) divisors on $X$ that are obtained as $f^{*}$-images of divisors on $S$. For \emph{positive} such divisors we obtain a generalization of the Eichler trace formula from \cite{[FK]}, as well as of the Chevalley--Weil formula. We begin with a useful identity involving the numbers from Equations \eqref{uchiCtchi} and \eqref{multqdifs}.
\begin{lem}
For any $q\in\mathbb{Z}$, conjugacy class $C$ in $G$, and character $\chi\in\widehat{G}$, the sum $\frac{u_{\overline{\chi},C}}{o(C)}+\alpha_{C,\overline{\chi}}^{q}$ equals $(q-1)\big(1-\frac{1}{o(C)}\big)+\big\{\frac{q-1-u_{\chi,C}}{o(C)}\big\}$. \label{altexp}
\end{lem}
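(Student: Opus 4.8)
The plan is to unwind the definitions on both sides and reduce the identity to a routine statement about floor functions. Recall that $\alpha_{C,\overline{\chi}}^{q}$ and $\beta_{C,\overline{\chi}}^{q}$ are defined by writing $q(o(C)-1)-u_{\overline{\chi},C}=\alpha_{C,\overline{\chi}}^{q}o(C)+\beta_{C,\overline{\chi}}^{q}$ with $0\le\beta_{C,\overline{\chi}}^{q}<o(C)$, so that $\alpha_{C,\overline{\chi}}^{q}=\big\lfloor\frac{q(o(C)-1)-u_{\overline{\chi},C}}{o(C)}\big\rfloor$. Hence the left-hand side equals $\frac{u_{\overline{\chi},C}}{o(C)}+\big\lfloor\frac{q(o(C)-1)-u_{\overline{\chi},C}}{o(C)}\big\rfloor$. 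First I would write $\frac{q(o(C)-1)}{o(C)}=q-\frac{q}{o(C)}$ and pull the integer $q$ out of the floor, so that the expression becomes $q+\frac{u_{\overline{\chi},C}}{o(C)}+\big\lfloor\frac{-q-u_{\overline{\chi},C}}{o(C)}\big\rfloor$, using $o(C)\mid q\cdot o(C)$ only implicitly — more carefully, $\lfloor x+q\rfloor=\lfloor x\rfloor+q$ for integer $q$, applied to $x=\frac{-q-u_{\overline{\chi},C}}{o(C)}$.

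Next I would relate $u_{\overline{\chi},C}$ to $u_{\chi,C}$: since $\overline{\chi}(C)=\zeta_{o(C)}^{-u_{\chi,C}}$, we have $u_{\overline{\chi},C}=o(C)-u_{\chi,C}$ when $u_{\chi,C}>0$ and $u_{\overline{\chi},C}=0$ when $u_{\chi,C}=0$; in either case $u_{\overline{\chi},C}\equiv -u_{\chi,C}\pmod{o(C)}$, and $\frac{u_{\overline{\chi},C}}{o(C)}=\{\frac{-u_{\chi,C}}{o(C)}\}$. Substituting, the left-hand side becomes $q+\{\tfrac{-u_{\chi,C}}{o(C)}\}+\big\lfloor\tfrac{-q+u_{\chi,C}}{o(C)}-1+\{\tfrac{-u_{\chi,C}}{o(C)}\}\cdot 0\big\rfloor$ — cleaner to just write everything over the common denominator from the start. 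I would therefore organize the computation as: left-hand side $=\frac{u_{\overline{\chi},C}+q(o(C)-1)-u_{\overline{\chi},C}}{o(C)}-\big\{\tfrac{q(o(C)-1)-u_{\overline{\chi},C}}{o(C)}\big\}=q-\tfrac{q}{o(C)}-\big\{\tfrac{q(o(C)-1)-u_{\overline{\chi},C}}{o(C)}\big\}$, using $\lfloor x\rfloor=x-\{x\}$. Then the fractional part simplifies: $\{\tfrac{q(o(C)-1)-u_{\overline{\chi},C}}{o(C)}\}=\{\tfrac{-q-u_{\overline{\chi},C}}{o(C)}\}=\{\tfrac{-q+u_{\chi,C}}{o(C)}\}=\{-\tfrac{q-1-u_{\chi,C}}{o(C)}-\tfrac1{o(C)}\}$. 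Comparing with the target right-hand side $(q-1)(1-\tfrac1{o(C)})+\{\tfrac{q-1-u_{\chi,C}}{o(C)}\}$, the identity reduces to checking that $q-\tfrac{q}{o(C)}-\{-\tfrac{q-1-u_{\chi,C}}{o(C)}-\tfrac1{o(C)}\}$ and $(q-1)-\tfrac{q-1}{o(C)}+\{\tfrac{q-1-u_{\chi,C}}{o(C)}\}$ agree, i.e. that $1-\tfrac1{o(C)}=\{\tfrac{q-1-u_{\chi,C}}{o(C)}\}+\{-\tfrac{q-1-u_{\chi,C}}{o(C)}-\tfrac1{o(C)}\}$.

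This last equation is the crux, and it is the elementary fact that for any real $y$ and any integer $m\ge 1$ one has $\{y\}+\{-y-\tfrac1m\}=1-\tfrac1m$, which holds because $-y-\tfrac1m$ is never itself an integer plus $0$ in a way that would make the sum drop to $0$ — more precisely, $\{y\}+\{-y\}$ is $0$ if $y\in\mathbb{Z}$ and $1$ otherwise, and shifting the second argument by $-\tfrac1m$ decreases its fractional part by $\tfrac1m$ (staying in $[0,1)$, since after the shift it cannot wrap around). I would verify this by splitting into the case $o(C)\mid q-1-u_{\chi,C}$ (where $\{\tfrac{q-1-u_{\chi,C}}{o(C)}\}=0$ and $\{-\tfrac1{o(C)}\}=1-\tfrac1{o(C)}$) and the complementary case (where $\{y\}+\{-y\}=1$ and subtracting $\tfrac1{o(C)}$ from the second term keeps it nonnegative). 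The main obstacle is purely bookkeeping: keeping the two conventions for $u_{\overline{\chi},C}$ straight and making sure the floor/fractional-part manipulations are valid (in particular that no unwanted wraparound occurs when the shift by $-\tfrac1{o(C)}$ is applied). Once the reduction to the elementary two-term fractional-part identity is in place, the proof is a one-line case check.
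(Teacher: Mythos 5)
Your proposal is correct and follows essentially the same route as the paper: write the floor as value minus fractional part, cancel $u_{\overline{\chi},C}$, convert to $u_{\chi,C}$ inside the fractional part, and reduce to the two-term identity $\{y\}+\{-y-\tfrac{1}{o(C)}\}=1-\tfrac{1}{o(C)}$, which the paper verifies by a congruence-and-range argument and you verify by a two-case check. One caution: that identity is \emph{false} for arbitrary real $y$ (e.g.\ $y=0.95$, $m=10$ gives $0.95+0.95\neq 0.9$ because of wraparound); your case check is valid only because here $y$ is an integer multiple of $\tfrac{1}{o(C)}$, which forces $\{-y\}\geq\tfrac{1}{o(C)}$ in the non-integral case, so you should state the lemma with that hypothesis.
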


\begin{proof}
Recall from Equation \eqref{multqdifs} that $\alpha_{C,\overline{\chi}}^{q}$ is $\big\lfloor q-\frac{q+u_{\overline{\chi},C}}{o(C)} \big\rfloor$, which we write as $q-\frac{q+u_{\overline{\chi},C}}{o(C)}-\big\{q-\frac{q+u_{\overline{\chi},C}}{o(C)}\big\}$. After adding $\frac{u_{\overline{\chi},C}}{o(C)}$ the expression outside the fractional part becomes just $q\big(1-\frac{1}{o(C)}\big)$, and inside the fractional part we can omit the integer $q$ and replace $-\frac{u_{\overline{\chi},C}}{o(C)}$ by $\frac{u_{\chi,C}}{o(C)}$ (see Lemma \ref{tuchichibar}). It therefore suffices to show that by adding $\big\{\frac{q-1-u_{\chi,C}}{o(C)}\big\}$ to the resulting fractional part $\big\{\frac{u_{\chi,C}-q}{o(C)}\big\}$ we get $1-\frac{1}{o(C)}$. But these two fractional parts are of the form $\frac{a}{o(C)}$ and $\frac{b}{o(C)}$ for some integers $a$ and $b$, with both integers lying between 0 and $o(C)-1$ (the quotients are fractional parts), and their sum must be in $-\frac{1}{o(C)}+\mathbb{Z}$ since the sum of the arguments of these fractional parts is $-\frac{1}{o(C)}$. As this can only happen when the two fractional parts sum to $1-\frac{1}{o(C)}$, this proves the lemma.
\end{proof}

We can now present the decompositions of the spaces of differentials associated with $f^{*}$-images of divisors on $S$. We shall later be focusing on parameters $g_{X}$ and $q$ satisfying the \emph{positivity condition} $(g_{X}-1)(q-1)\geq0$, or explicitly either $g_{X}\geq2$ and $q\geq1$, or $g_{X}=1$ and $q$ is arbitrary, or $g_{X}=0$ and $q\leq1$. In case $q\neq1$ and $g_{X}\neq1$ (i.e., if the inequality in the positivity condition is strict), we say that $g_{X}$ and $q$ satisfy the \emph{strict positivity condition}.
\begin{prop}
For any $\chi\in\widehat{G}$, any $q\in\mathbb{Z}$, and any $\Gamma\in\mathrm{Div}(S)$ we have \[\Omega^{q}(-f^{*}\Gamma)_{\chi}=f^{*}L\bigg[\!\bigg(\!\!\deg\widetilde{\Upsilon}_{\chi,q}-q(2g_{S}-2)-t_{\overline{\chi}}-\sum_{C \neq Id_{X}}r_{C}\alpha_{C,\overline{\chi}}^{q}\bigg)\cdot\nu-\Gamma-\widetilde{\Upsilon}_{\chi,q}\bigg]\!\cdot\omega_{\chi,q},\] where $t_{\overline{\chi}}$, $\alpha_{C,\overline{\chi}}^{q}$, and $\omega_{\chi,q}$ are defined in Equation \eqref{uchiCtchi}, Equation \eqref{multqdifs}, and Proposition \ref{normdif} respectively. If $\Gamma\geq0_{S}$ and $g_{X}$ and $q$ satisfy the positivity condition, then the dimension $i^{q}_{\chi}(-f^{*}\Gamma)$ of the latter space is \[(2q-1)(g_{S}-1)+\deg\Gamma+\sum_{C \neq Id_{X}}r_{C}\big[(q-1)\big(1-\tfrac{1}{o(C)}\big)+\big\{\tfrac{q-1-u_{\chi,C}}{o(C)}\big\}\big]+\delta\] for some $\delta\in\{0,1\}$. Moreover, we have $\delta=0$ if $\Gamma\neq0_{S}$ or if $g_{X}$ and $q$ satisfy the strict positivity condition. On the other hand, when $q=1$ and $\Gamma=0_{S}$ we have $\delta=\delta_{\chi,\mathbf{1}}$. Finally, if $G$ is Abelian then $\Omega^{q}(-f^{*}\Gamma)$ is the direct sum of all these spaces, and in case $\Gamma$ is positive and $g_{X}$ and $q$ satisfy the positivity condition, the dimension $i^{q}(-f^{*}\Gamma)$ is the sum of the asserted dimensions. \label{dimchardif}
\end{prop}

\begin{proof}
For the first assertion we follow the proof of Proposition \ref{Omega1/Deltachi} and Equation \eqref{OmegaDeltaChi}, which we can do without additional notation since all the sets $A_{C,\chi}$ from Equation \eqref{ACchi} (as well as their generalization for arbitrary $q$) are empty. Indeed, the (normalized) divisor of $\omega_{\chi,q}$ from Proposition \ref{normdif} is presented as the $f^{*}$-image of some divisor $\Sigma$ on $S$ plus branch points with small multiplicities, and therefore an element of $\Omega^{q}(X)_{\chi}$, which must be of the form $f^{*}F\cdot\omega_{\chi,q}$ for some $F\in\mathbb{C}(S)$, lies in $\Omega^{q}(-f^{*}\Gamma)_{\chi}$ if and only if $F \in L(-\Sigma-\Gamma)$. By substituting the explicit value of $\Sigma$ from Proposition \ref{normdif}, the first assertion follows.

In order to evaluate the dimension $i^{q}_{\chi}(-f^{*}\Gamma)$, which we have determined as $r(-\Sigma-\Gamma)$, we apply the Riemann--Roch Theorem. When $\deg(\Sigma+\Gamma)\geq2g_{S}-2$ the latter dimension is $1-g_{S}+\deg(\Sigma+\Gamma)$, except when $\Sigma+\Gamma$ is a canonical divisor (of degree precisely $2g_{S}-2$), where we have to add 1 to that value. We now assume that $\Gamma\geq0_{S}$ and $g_{X}$ and $q$ satisfy the positivity condition, and in order to be able to apply the above argument we compare $\deg(\Sigma+\Gamma)$ with $2g_{S}-2$. By expanding $t_{\overline{\chi}}$ as in its definition in Equation \eqref{uchiCtchi} and applying Lemma \ref{altexp}, we find that $\deg(\Sigma+\Gamma)-(2g_{S}-2)$ is
\[(q-1)\Bigg[2g_{S}-2+\sum_{C \neq Id_{X}}r_{C}\big(1-\tfrac{1}{o(C)}\big)\Bigg]+\sum_{C \neq Id_{X}}r_{C}\big\{\tfrac{q-1-u_{\chi,C}}{o(C)}\big\}+\deg\Gamma.\] But Corollary \ref{Galgen} implies that the first term here is just $\frac{q-1}{n}(2g_{X}-2)$, so that the latter expression is non-negative by our assumption on $\Gamma$, $q$, and $g_{X}$. The formula for $i^{q}_{\chi}(-f^{*}\Gamma)$ is therefore established up to the value of $\delta\in\{0,1\}$. In addition, when $\Gamma\neq0_{S}$ or when the positivity condition is strict we get $\deg(\Sigma+\Gamma)>2g_{S}-2$, so that indeed $\delta=0$ in these cases.

It remains to consider the case with $\Gamma=0_{S}$ and $q=1$, where the remaining sum over $C$ gives just $t_{\overline{\chi}}$ (this also follows from a direct application of Equation \eqref{OmegaDeltaChi}, with $A_{C,\overline{\chi}}=\emptyset$ for every $C$, and with the parameters $p=0$ and $\Upsilon=0_{S}$ associated with $\Delta=f^{*}\Gamma=0_{X}$). Hence $\delta=0$ if $t_{\overline{\chi}}>0$, and when $t_{\overline{\chi}}=0$ we have to examine the canonicity of $\Sigma+\Gamma=\Sigma=\widetilde{\Upsilon}_{\chi}+(2g_{S}-2-\deg\widetilde{\Upsilon}_{\chi})\cdot\nu$. Now, this divisor is canonical if and only if by subtracting $\mathrm{div}(\varpi_{\chi})$ we get a principal divisor, and recalling the value of the latter divisor from Equation \eqref{varpichiq} we find that $\delta=1$ if and only $t_{\overline{\chi}}=0$ and $\Upsilon_{\overline{\chi}}-\deg\Upsilon_{\overline{\chi}}\cdot\nu$ is principal. But this was seen in the last assertion of Proposition \ref{divgens} to be equivalent to $\Upsilon_{\overline{\chi}}=0_{S}$ and to $\chi=\mathbf{1}$, which determines $\delta$ as the required value when $\Gamma=0_{S}$ and $q=1$. The last two assertions follow, as usual, from the last statement in Lemma \ref{chardiff}. This completes the proof of the proposition.
\end{proof}
The result for $\Gamma=0_{S}$ and $q=1$ in Proposition \ref{dimchardif} amounts to $i_{\chi}(0_{X})$ being $g_{S}-1+t_{\overline{\chi}}$ for $\chi\neq\mathbf{1}$ and just $g_{S}$ for $\chi=\mathbf{1}$, the latter value being the dimension of $f^{*}\Omega(0_{S})$. The analysis of the remaining case, with $g_{X}=1$ and $\Gamma=0_{S}$ (and arbitrary $q$), is completed as follows.
\begin{prop}
Take $g_{X}=1$ and $\Gamma=0_{S}$ in Proposition \ref{dimchardif}. Then the parameter $\delta$ is 1 for precisely one character $\chi_{q}$. This character is $\mathbf{1}$ if and only if $q$ is congruent to 1 modulo the least common multiple of all the numbers $o(C)$ for which $r_{C}>0$, a situation which occurs for every $q$ if and only if $g_{S}=1$. \label{gX1}
\end{prop}

\begin{proof}
Since the canonical divisors on $X$ are principal (hence the normalized one is trivial), we know that $i^{q}(0_{X})=1$ for every $q\in\mathbb{Z}$, and $\Omega^{q}(0_{X})$ is spanned by the $q$th power $\omega_{X}^{q}$ of a fixed generator $\omega_{X}$ of $\Omega(0_{X})=\Omega^{1}(0_{X})$. This already determines the structure of $\bigoplus_{q=-\infty}^{\infty}\Omega^{q}(0_{X})$ as a $G$-module: $G$ must act on $\Omega(0_{X})$ via some character $\chi_{X}$, so that $i_{\chi}^{q}(0_{X})$ is 1 if $\chi=\chi_{X}^{q}$ and 0 otherwise. Moreover, since $i^{q}(0_{X})=1$ we trivially have $\Omega^{q}(0_{X})=\bigoplus_{\chi\in\widehat{G}}\Omega^{q}(0_{X})_{\chi}$, also without $G$ being Abelian (compare with the last assertion of Lemma \ref{chardiff}). Note that the required character $\chi_{q}$ will not, in general, be the same as $\chi_{X}^{q}$.

We first consider the case in which $g_{S}=1$, where Corollary \ref{Galgen} implies that $r_{C}=0$ for every $C$ (i.e., the cover $f$ is unramified). Then the number from Proposition \ref{dimchardif} reduces to $\delta$ for every $\chi$, and since $f$ is unramified we have $\mathrm{div}(f^{*}\omega)=f^{*}\big(\mathrm{div}(\omega)\big)$ for any $\omega\in\Omega^{q}(S)$ for every $q$. It follows that if $\omega_{S}$ generates $\Omega(0_{S})$ then we have $\omega_{X}=f^{*}\omega_{S}\in\Omega(0_{X})_{\mathbf{1}}$, and the previous paragraph implies that $\chi_{q}=\chi_{X}^{q}=\mathbf{1}$ for every $q$ (this could alternatively be shown via the equality $\Omega^{q}(0_{X})_{\chi}=L(\deg\widetilde{\Upsilon}_{\chi,q}\cdot\nu-\widetilde{\Upsilon}_{\chi,q})\cdot\omega_{\chi,q}$ and some principality analysis, combined with the last assertion of Proposition \ref{divgens} since $t_{\overline{\chi}}=0$). This proves the assertion when $g_{S}=1$.

We thus turn to the case where $g_{S}=0$, and observe that if the kernel of $\chi\in\widehat{G}$ contains all those classes $C$ for which $r_{C}>0$ then $\chi=\mathbf{1}$ by the last assertion of Corollary \ref{P1Cnorm} (since $t_{\chi}=0$). Now, the proof of Proposition \ref{dimchardif} shows that the expression $\deg(\Sigma+\Gamma)-(2g_{S}-2)=\deg\Sigma+2$ reduces to $\sum_{C \neq Id_{X}}r_{C}\big\{\frac{q-1-u_{\chi,C}}{o(C)}\big\}$, and since every divisor of degree $-2$ on $S=\mathbb{P}^{1}(\mathbb{C})$ is canonical, we deduce that $\delta=1$ precisely when the latter sum over $C$ vanishes. Therefore $i^{q}_{\chi}(0_{X})$ equals $\sum_{C \neq Id_{X}}r_{C}\big\{\frac{q-1-u_{\chi,C}}{o(C)}\big\}-1$ when the sum is non-zero, and just 0 when it vanishes (i.e., when $\delta=1$). To determine when this happens, consider the case $q=1$ proved in Proposition \ref{dimchardif}, and deduce that when $g_{S}=0$ the dimension $i_{\chi}(0_{X})$ is $t_{\overline{\chi}}-1$ for $\chi\neq\mathbf{1}$ and $0=t_{\mathbf{1}}$ for $\chi=\mathbf{1}$. Hence $\sum_{\chi\in\widehat{G}}(t_{\chi}-1)=i(0_{X})-1=0$ and thus $\sum_{\chi\in\widehat{G}}t_{\chi}=|\widehat{G}|=|G^{ab}|=n^{ab}$.

We now claim that the map $\chi\mapsto\chi(C)$ surjects onto $\zeta_{o(C)}^{\mathbb{Z}/o(C)\mathbb{Z}}$ for every class $C$ for which $r_{C}>0$ (this is not a trivial assertion for general conjugacy classes in arbitrary finite groups, as the case of a non-trivial conjugacy class that is contained in $[G,G]$ shows). To see this, note that the image $\{\chi(C)|\chi\in\widehat{G}\}$ is a subgroup of the group of roots of unity of order $o(C)$, so that if it contains $d_{C}$ elements for some divisor $d_{C}$ of $o(C)$ then there are $\frac{n^{ab}}{d_{C}}$ characters $\chi$ for which $u_{\chi,C}=\frac{o(C)}{d_{C}}l$ for every $0 \leq l<d_{C}$. But then $\sum_{\chi\in\widehat{G}}t_{\chi}$, which was seen above to equal $n^{ab}$, is $\sum_{C \neq Id_{X}}\frac{n^{ab}r_{C}}{d_{C}}\sum_{l=0}^{d_{C}-1}\frac{o(C)l/d_{C}}{o(C)}=\frac{n^{ab}}{2}\sum_{C \neq Id_{X}}r_{C}\big(1-\frac{1}{d_{C}}\big)$, and we get $\sum_{C \neq Id_{X}}r_{C}\big(1-\frac{1}{d_{C}}\big)=2$. But Corollary \ref{Galgen} with our values of $g_{X}$ and $g_{S}$ yields $\sum_{C \neq Id_{X}}r_{C}\big(1-\frac{1}{o(C)}\big)=2$, so that $\sum_{C \neq Id_{X}}\frac{r_{C}}{o(C)}=\sum_{C \neq Id_{X}}\frac{r_{C}}{d_{C}}$. Since $d_{C} \leq o(C)$ for every $C$ we find that $d_{C}=o(C)$ for every $C$ with $r_{C}>0$, which establishes the surjectivity claim.

We conclude that wherever $r_{C}>0$, each number between 0 and $o(C)-1$ equals $u_{\chi,C}$ for $\frac{n^{ab}}{o(C)}$ characters $\chi$. But this implies that when we evaluate \[\sum_{\chi\in\widehat{G}}\bigg[\sum_{C \neq Id_{X}}r_{C}\bigg\{\frac{q-1-u_{\chi,C}}{o(C)}\bigg\}-1\bigg]=\sum_{C \neq Id_{X}}r_{C}\sum_{\chi\in\widehat{G}}\bigg\{\frac{q-1-u_{\chi,C}}{o(C)}\bigg\}-n^{ab},\] the inner sum over $\chi$ yields $\frac{n^{ab}}{2}\big(1-\frac{1}{o(C)}\big)$ for every $\chi$ (as in the previous paragraph) and the total expression vanishes. Recalling that $i^{q}_{\chi}(0_{X})$ equals $\sum_{C \neq Id_{X}}r_{C}\big\{\frac{q-1-u_{\chi,C}}{o(C)}\big\}-1+\delta$, we find that $i^{q}(0_{X})=\sum_{\chi\in\widehat{G}}i^{q}_{\chi}(0_{X})$ reduces to $\sum_{\chi\in\widehat{G}}\delta$. Since $i^{q}(0_{X})=1$, we deduce that $\delta=1$ for precisely one character, which we denote by $\chi_{q}$. We conclude that $\big\{\frac{q-1-u_{\chi,C}}{o(C)}\big\}=0$ for every $C$ with $r_{C}>0$ if and only if $\chi=\chi_{q}$, which implies that $\chi_{q}=\mathbf{1}$ if and only if $q$ satisfies the required congruences. As the equality from Corollary \ref{Galgen} implies the existence of a non-trivial class $C$ with $r_{C}>0$, not all these congruences are trivial, and there are values of $q$ for which $\chi_{q}\neq\mathbf{1}$ when $g_{S}=0$. This completes the proof of the proposition.
\end{proof}

Note that as the proof of Proposition \ref{gX1} shows, the parameter $\delta$ takes the value 1 when $g_{X}=1$ and $g_{S}=0$ precisely when the number from Proposition \ref{dimchardif} equals $\delta-1$, assuring that the dimension $i_{\chi}(0_{X})$ remains non-negative.

We remark that the condition $g_{X}=1$ in Proposition \ref{gX1} is rather restrictive on $G$. When $g_{S}=1$ we get an isogeny of elliptic curves, for which the Galois group is a finite Abelian group generated by at most two elements. On the other hand, if $g_{S}=0$ then the equality from Corollary \ref{Galgen} can be satisfied if there are 4 branching images with $\psi$-classes all of order 2, or if there are 3 branching images whose $\psi$-classes have orders $(3,3,3)$, $(2,4,4)$ or $(2,3,6)$ (these $\psi$-classes are not necessarily distinct), and in no other situation. Moreover, we have seen in the proof of Proposition \ref{gX1} that when $r_{C}>0$, each value $\zeta_{o(C)}^{l}$ is attained on $C$ by some character $\chi\in\widehat{G}$. We also know that the number $t_{\chi}$ must be integral for every $\chi$ (see Proposition \ref{divgens}), and in fact it must equal 0 for $\chi=\mathbf{1}$, 2 for $\chi=\chi_{X}$, and 1 for any other character $\chi$ (recall the value of the dimensions $i_{\chi}(0_{X})$ and their sum $i(0_{X})$ with these values of $g_{X}$ and $g_{S}$). In fact, the definition of $t_{\chi}$ in Equation \eqref{uchiCtchi} and the equality from Corollary \ref{Galgen} then imply that $u_{\chi_{X},C}=o(C)-1$ for every class $C$ with $r_{C}>0$. Knowing the numbers $u_{\chi,C}$ for every class $C$ with $r_{C}>0$ determines $\chi$ (the proof of Proposition \ref{gX1} shows that if $g_{X}=1$, $g_{S}=0$, and $\chi(C)=1$ wherever $r_{C}>0$, then $\chi=\mathbf{1}$), so that $\widehat{G}$ must be embedded as a subgroup of $\prod_{\{C|r_{C}>0\}}\zeta_{o(C)}^{\mathbb{Z}/o(C)\mathbb{Z}}$.

This allows us to deduce that $G$ is a group whose character group $\widehat{G}$ is one of the following finite list. In the case with 4 branching images with $\psi$-images of order 2 (not necessarily distinct) we know that $\chi_{X}$ has order 2, and there might be (in case enough classes are distinct) additional characters, which attain $-1$ on two classes and 1 on two others (for the integrality of $t_{\chi}$). Hence in this case $\widehat{G}$ is a vector space over the field of two elements, of dimension between 1 and 3. If $f$ has 3 branching images (not necessarily distinct) whose $\psi$-images are of order 3 then $\chi_{X}$ has order 3, and in case all the 3 classes are distinct $\widehat{G}$ may also contain the characters attaining the three different possible values on the three different classes. Hence $\widehat{G}$ is then a vector space over the field of three elements, whose dimension is either 1 or 2. In the case of branching images with $\psi$-orders 2, 4, and 4, the order of $\chi_{X}$ is 4, and there are two possibilities: Either $\chi_{X}$ generates $\widehat{G}$, or, in case the two classes of order 4 are distinct, $\widehat{G}$ might be of order 8 (the additional characters either attain $i$ and $-i$ on the two classes of order 4 and 1 on the third class, or attain $-1$ on two of the classes and 1 on the remaining one). Finally, for classes of orders 2, 3, and 6 denote by $C_{6}$ the class for which $o(C_{6})=6$ and $r_{C_{6}}>0$ (i.e., $r_{C_{6}}=1$), and if $\chi\in\widehat{G}$ is such that $\chi(C_{6})$ is of order 6 then the integrality of $t_{\chi}$ implies that the numbers $u_{\chi,C}$ for the other two classes with $r_{C}>0$ is uniquely determined. Therefore $\chi_{X}$ generates $\widehat{G}$, and it is of order 6. Examples of all these branching situations can be obtained using $Z_{n}$-curves (where $G$ and $\widehat{G}$ are cyclic): The $Z_{n}$-curves $w^{2}=\prod_{i=1}^{4}(z-\lambda_{i})$, $w^{3}=\prod_{i=1}^{3}(z-\lambda_{i})$, $w^{4}=(z-\lambda_{1})(z-\lambda_{2})(z-\lambda_{3})^{2}$, and $w^{6}=(z-\lambda_{1})(z-\lambda_{2})^{2}(z-\lambda_{3})^{3}$ (with the different $\lambda_{i}$s in the same equation being distinct complex numbers) all have genus 1 with no branching over $\infty$.

\smallskip

Writing $\chi_{q}=\mathbf{1}$ for $q=1$ also when $g_{X}\neq1$, we can use Proposition \ref{gX1} to express the number $\delta$ from Proposition \ref{dimchardif} in general using Kronecker delta symbols: Indeed, $\delta$ is the product of $\delta_{\Gamma,0_{S}}$, $\delta_{(g_{X}-1)(q-1),0}$, and $\delta_{\chi,\chi_{q}}$.

As always, the form of our results becomes simpler when $S=\mathbb{P}^{1}(\mathbb{C})$. As the assertion remains unaffected when $\Gamma$ is replaced by a linearly equivalent divisor, if suffices to present the consequence of Propositions \ref{dimchardif} and \ref{gX1} for this case under the assumption that $\Gamma=p\cdot\infty$ (so that $f^{*}\Gamma$ is normalized).
\begin{cor}
If $f:X\to\mathbb{P}^{1}(\mathbb{C})$ is a Galois cover with Galois group $G$, $p$ and $q$ are in integers from $\mathbb{Z}$, and $\chi$ is a character from $\widehat{G}$, then set $d_{p,q,\chi}$ to be the number $p+\sum_{C \neq Id_{X}}r_{C}\big\{\frac{q-1-u_{\chi,C}}{o(C)}\big\}+\frac{q-1}{n}(2g_{X}-2)-2$. Then the space $\Omega^{q}\big(-p\sum_{\upsilon=1}^{n}\infty_{\upsilon}\big)_{\chi}$ is $\mathcal{P}_{\leq d_{p,q,\chi}}(f^{*}z)\omega_{\chi,q}$ in case $d_{p,q,\chi}\geq-1$, of dimension $d_{p,q,\chi}+1$, and is trivial otherwise. For $q=1$ this space reduces to $\mathcal{P}_{\leq t_{\overline{\chi}}+p-2}(z)\omega_{\chi}$ (or 0). If $g_{X}$ and $q$ satisfy the positivity condition and $p\geq0$ (i.e., $\Gamma\geq0_{S}$) then $d_{p,q,\chi}\geq-1$ if and only if the parameter $\delta$ from Proposition \ref{dimchardif} vanishes (in particular this is always the case when $p=\deg\Gamma>0$). For Abelian $G$ the space $\Omega^{q}\big(-p\sum_{\upsilon=1}^{n}\infty_{\upsilon}\big)$ and its dimension $i^{q}\big(-p\sum_{\upsilon=1}^{n}\infty_{\upsilon}\big)$ are the usual direct sum and sum respectively. \label{dimdifP1C}
\end{cor}

\begin{proof}
As in the proof of Corollary \ref{funspP1C}, the first three assertions follow from the proofs of Propositions \ref{dimchardif} and \ref{gX1}, together with the additional information given in Corollaries \ref{P1Cnorm} and \ref{P1Cdiff} for this case (recall also that when $g_{S}=0$ the equality $\deg(\Sigma+\Gamma)=2g_{S}-2=-2$ implies the canonicity of this divisor). The last assertion, about Abelian covers, is proved as in Proposition \ref{Omega1/Deltachi}, or as in Corollaries \ref{totdimP1C} and \ref{diffGab}. This proves the corollary.
\end{proof}
The case with $q=1$ and $p=0$ in the last assertion in Corollary \ref{dimdifP1C} generalizes Proposition 1.3 of \cite{[Z]} to the case of general Abelian covers of $\mathbb{P}^{1}(\mathbb{C})$.

We recall again that for an Abelian cover $X$ of $\mathbb{P}^{1}(\mathbb{C})$ and $p>0$ the total dimension $i\big(-p\sum_{\upsilon=1}^{n}\infty_{\upsilon}\big)$ equals the sum $\sum_{\chi\in\widehat{G}}(t_{\overline{\chi}}-1+p)$, while the fact that for $g_{S}=p=0$ the term $i_{\mathbf{1}}(0_{X})$ associated with $\chi=\mathbf{1}$ vanishes allows us to write the dimension $g_{X}$ as $\sum_{\mathbf{1}\neq\chi\in\widehat{G}}(t_{\overline{\chi}}-1)$. For $p<0$ we shall have to begin distinguishing between the characters $\chi$ according to whether $t_{\overline{\chi}}\geq|p|+1$ or not. In fact, recalling that all the assertions about the case with $g_{S}=0$ (namely Corollaries \ref{P1Cnorm}, \ref{funspP1C}, \ref{totdimP1C}, \ref{P1Cdiff}, and \ref{dimdifP1C}) were proven without invoking the Riemann--Roch Theorem, we can prove that theorem for $G$-integral divisors on Abelian covers of $\mathbb{P}^{1}(\mathbb{C})$ directly from our results. Indeed, by Proposition \ref{norminvdiv} it suffices to prove the assertion for normalized divisors, and for such a divisor consider the difference between the expression for $i(\Delta)$ from Corollary \ref{P1Cdiff} from $r(-\Delta)$ given in Corollary \ref{totdimP1C}. Observing that $x=\max\{0,x\}-\max\{0,-x\}$ for any number $x$, this difference is the sum $\sum_{\chi\in\widehat{G}}\big(p+1+\sum_{Id_{X}\neq\sigma \subseteq G}|A_{\sigma,\chi}|-t_{\chi}\big)$, and the proofs of Theorems \ref{intnonsp} and \ref{eqnointg-1} evaluate the latter sum as the required value $\deg\Delta+1-g_{X}$.

\smallskip

Since evaluating the trace of a given element $\tau \in G$ depends only on the behavior of the space in question under the cyclic group generated by $\tau$, we can deduce a generalization of the Eichler Trace Formula (stated and proved as, e.g., the theorem in Subsection V.2.9 of \cite{[FK]}). For this, and for the generalization of the Chevalley--Weil formula to follow, we shall need the following identity.
\begin{lem}
For any integer $d\geq1$ and any invertible number $y\neq1$, the sum $\sum_{l=0}^{d-1}ly^{l+1}$ equals $\frac{1-y^{d}}{(1-y^{-1})^{2}}+\frac{dy^{d}}{1-y^{-1}}$. In particular, if $y$ is a $d$th root of unity then the latter expression is just $\frac{d}{1-\overline{y}}$. \label{transform}
\end{lem}

\begin{proof}
We work by induction on $d$, where for $d=1$ both sides clearly vanish (since $\frac{1-y^{1}}{(1-y^{-1})^{2}}=-\frac{1 \cdot y^{1}}{1-y^{-1}}$). If the assertion holds for $d$, the one for $d+1$ would follow if we show that adding $dy^{d+1}$ to $\frac{1-y^{d}}{(1-y^{-1})^{2}}+\frac{dy^{d}}{1-y^{-1}}$ yields the next expression $\frac{1-y^{d+1}}{(1-y^{-1})^{2}}+\frac{(d+1)y^{d+1}}{1-y^{-1}}$. But the difference $\frac{1-y^{d+1}}{(1-y^{-1})^{2}}-\frac{1-y^{d}}{(1-y^{-1})^{2}}$ is $\frac{y^{d}-y^{d+1}}{(1-y^{-1})^{2}}=-\frac{y^{d+1}}{1-y^{-1}}$, which cancels with $\frac{y^{d+1}}{1-y^{-1}}$, and the difference between the remaining term $\frac{dy^{d+1}}{1-y^{-1}}$ and $\frac{dy^{d}}{1-y^{-1}}$ is indeed $\frac{dy^{d+1}-dy^{d}}{1-y^{-1}}=dy^{d+1}$. This proves the first assertion, and after substituting $y^{d}=1$ and $y^{-1}=\overline{y}$ for $y$ a $d$th root of unity we deduce the second assertion as well. This proves the lemma.
\end{proof}

We can now prove our version of the Eichler Trace Formula. Consider a compact Riemann surface $X$ of genus $g_{X}$, with some element $Id_{X}\neq\tau \in Aut(X)$ generating a cyclic subgroup $H$ of $Aut(X)$, of some finite order $d=o(\tau)$. Let $Y$ be the quotient Riemann surface $H \backslash X$, of some genus $g_{Y}$, with $\pi:X \to Y$ the projection, and consider a positive divisor $\Sigma$ on $Y$ and an integer $q\in\mathbb{Z}$ such that $g_{X}$ and $q$ satisfy the positivity condition. Given a fixed point $P$ of $\tau$, set $\pi(P)=\eta$, and let $\chi_{P}$ denote the character taking the generator $\psi(\eta)$ of $H$ to $\zeta_{d}$. With this notation, the formula is as follows.
\begin{prop}
The trace of the action of $\tau$ on the space $\Omega^{q}(-\pi^{*}\Sigma)$ is the sum $\sum_{\{P \in X|\tau(P)=P\}}\frac{\chi_{P}(\tau)^{q}}{1-\chi_{P}(\tau)}+\delta_{\Sigma,0_{Y}}\cdot\delta_{(g_{X}-1)(q-1),0}\cdot\chi_{q}(\tau)$. Here $\chi_{q}$ is defined in Proposition \ref{gX1} when $g_{X}=1$, and is just $\mathbf{1}$ for $q=1$ and general $g_{X}$.  \label{tracesigma}
\end{prop}

\begin{proof}
If $\phi$ is the character of $H$ that takes $\tau$ to $\zeta_{d}$ then $\widehat{H}$ consists of the powers $\phi^{\alpha}$ with $\alpha\in\mathbb{Z}/d\mathbb{Z}$, and given a representation space $V$ of $H$, we know that $\tau$ operates on the subspace $V_{\chi}$ for $\chi=\phi^{\alpha}$ as multiplication by $\zeta_{d}^{\alpha}$. As for the spaces $\Omega^{q}(-f^{*}\Gamma)$ under consideration, Proposition \ref{dimchardif} (completed by Proposition \ref{gX1}) provides us with the dimension of the space associated with every such $\chi$, together with the fact that their direct sum is the whole space (since $H$ is Abelian). The proof of that proposition shows that the dimension of the space in question is the sum of $2g_{Y}-2+\frac{q-1}{d}(2g_{X}-2)+\deg\Gamma$, the expression $\delta_{\Gamma,0_{S}}\cdot\delta_{(g_{X}-1)(q-1),0}\cdot\delta_{\chi,\chi_{q}}$ (see Proposition \ref{gX1}), and a sum of fractional parts. The trace is obtained by multiplying this dimension by $\zeta_{d}^{\alpha}$ and then taking the sum over $\alpha$. When doing so the contributions of the constants cancel (since  $\sum_{\alpha\in\mathbb{Z}/d\mathbb{Z}}\zeta_{d}^{\alpha}$ vanishes for $d>1$), $\delta_{\Gamma,0_{S}}\cdot\delta_{(g_{X}-1)(q-1),0}\cdot\delta_{\chi,\chi_{q}}$ adds the required term involving $\delta$-symbols, and it remains to consider the fractional parts. The non-trivial elements in $H$ are $\sigma=\tau^{\beta}$ for $0\neq\beta\in\mathbb{Z}/d\mathbb{Z}$, and as for each $\chi=\phi^{\alpha}$ the value $u_{\chi,\sigma}$ is congruent to $\frac{\alpha\beta}{\gcd\{d,\beta\}}$ modulo $o(\sigma)=\frac{d}{\gcd\{d,\beta\}}$ (since $\chi(\sigma)=\zeta_{d}^{\alpha\beta}$, and we can cancel $\gcd\{d,\beta\}$ from the index and the power), we find that $r_{\tau^{\beta}}$ multiplies $\big\{\frac{\gcd\{d,\beta\}(q-1)-\alpha\beta}{d}\big\}$. Multiplying each such term by $\zeta_{d}^{\alpha}$ and summing over $\alpha$, we consider first those $\beta$ for which $\gcd\{d,\beta\}>1$. In this case we can decompose the sum over $\alpha$ to representatives for $\mathbb{Z}/\frac{d}{\gcd\{d,\beta\}}\mathbb{Z}$ in $\mathbb{Z}/d\mathbb{Z}$, and $\big\{\frac{\gcd\{d,\beta\}(q-1)-\alpha\beta}{d}\big\}$ is independent of the choice of representative. For every such representative we therefore get a constant ($\zeta_{d}$ raised to the power of that representative) times the sum $\sum_{\gamma}\zeta_{d}^{\gamma}$ over $\gamma\in\frac{d}{\gcd\{d,\beta\}}\mathbb{Z}/d\mathbb{Z}$. But this sum coincides with $\sum_{\varepsilon\in\mathbb{Z}/\gcd\{d,\beta\}\mathbb{Z}}\zeta_{\gcd\{d,\beta\}}^{\varepsilon}$, so that it vanishes since we assumed that $\gcd\{d,\beta\}>1$.

Therefore only the contributions of elements $\tau^{\beta}$ for $\beta$ co-prime to $d$ have to be considered. As for each such $\beta$ we have $o(\tau^{\beta})=d=|H|$, each point of $Y$ with such a $\psi$-image (which thus generates $H$) is the image of a single point of $X$, which is thus a fixed point of $H$ hence of $\tau$. The same argument shows that all the fixed points of $\tau$ in $X$ have images in $Y$ with such $\psi$-values. Now, let $\tilde{\beta}$ denote the inverse of $\beta$ modulo $d$, take a point $\eta \in Y$ for which $\psi(\eta)=\tau^{\beta}$ (there are $r_{\tau^{\beta}}$ such points), and let $P$ is the unique pre-image of $\eta$ in $X$. Since in this case the character $\phi^{\tilde{\beta}}$ takes $\psi(\eta)$ to $\zeta_{d}$, it equals $\chi_{P}$ by definition. Moreover, since $\gcd\{d,\beta\}=1$ now, the fractional part is just $\big\{\frac{q-1-\alpha\beta}{d}\big\}$, and for each $0 \leq l<d$ the quotient $\frac{l}{d}$ is obtained as this fractional part for precisely one value of $\alpha$, namely $\alpha=\tilde{\beta}(q-1-l)$. This summation index change implies that each such point $P$ contributes $\sum_{l=0}^{d-1}\frac{l}{d}\zeta_{d}^{\tilde{\beta}(q-1-l)}$ to the trace of $\tau$, in which we write $\zeta_{d}^{\tilde{\beta}}=\chi_{P}(\tau)$ and obtain $\frac{\chi_{P}(\tau)^{q}}{d}\sum_{l=0}^{d-1}l\cdot\overline{\chi_{P}(\tau)}^{l+1}$. As $\overline{\chi_{P}(\tau)}$ is a $d$th root of unity distinct from 1 (because it is primitive and $d=o(\tau)>1$), Lemma \ref{transform} transforms the latter expression to the desired one. This proves the proposition.
\end{proof}

Note that the sum over $l$ appearing in the proof of Proposition \ref{tracesigma} (as well as in the proof of Theorem \ref{CWgen} below) is closely related to the representation called the ramification module in Section 3 of \cite{[JK]}.

\smallskip

Knowing the traces of the action of every element of $G$ on $\Omega^{q}(-f^{*}\Gamma)$, we can now establish the following generalization of the Chevalley--Weil formula from \cite{[CW]} and \cite{[W]}. We denote the set of isomorphism classes of complex irreducible representations of $G$ by $Irr_{\mathbb{C}}(G)$, and for every representation $\rho$ in that set let $d_{\rho}$ be the dimension of its representation space and let $\chi_{\rho}$ be its character. For any conjugacy class $C$, the element $\rho(\sigma)$ for $\sigma \in C$ decomposes the representation space of $\rho$ into eigenspaces of eigenvalues $\zeta_{o(C)}^{\alpha}$ for $\alpha\in\mathbb{Z}/o(C)\mathbb{Z}$. The dimensions of these spaces are independent of the choice of $\sigma \in C$, and they are denoted by $N_{C,\alpha}^{\rho}$ for such $\rho$, $C$, and $\alpha$. The sum $\sum_{\alpha\in\mathbb{Z}/o(C)\mathbb{Z}}N_{C,\alpha}^{\rho}$ is therefore the full dimension $d_{\rho}$ of the representation space of $\rho$.
\begin{thm}
Let $f:X \to S$ be a Galois cover with Galois group $G$, and take a positive divisor $\Gamma$ on $S$ and an index $q$ such that $g_{X}$ and $q$ satisfy the positivity condition $(g_{X}-1)(q-1)\geq0$. Then a representation $\rho \in Irr_{\mathbb{C}}(G)$ appears in the representation of $G$ on the space $\Omega^{q}(-f^{*}\Gamma)$ with multiplicity \[d_{\rho}[(2q-1)(g_{S}-1)+\deg\Gamma]+
\sum_{C \neq Id_{X}}r_{C}\sum_{\alpha=0}^{o(C)-1}N_{C,\alpha}^{\rho}\big[(q-1)\big(1-\tfrac{1}{o(C)}\big)+\big\{\tfrac{q-1-\alpha}{o(C)}\big\}\big]+\delta,\] where $\delta$ is the product $\delta_{\Gamma,0_{S}}\cdot\delta_{(g_{X}-1)(q-1),0}\cdot\delta_{\rho,\chi_{q}}\in\{0,1\}$. \label{CWgen}
\end{thm}

\begin{proof}
We recall from representation theory that if $V$ is a finite dimensional vector space on which $G$ acts, then $\rho$ appears in the resulting representation with multiplicity $\frac{1}{n}\sum_{\tau \in G}\chi_{V}(\tau)\chi_{\rho}(\tau^{-1})$. But the traces $\chi_{V}(\tau)$ for $V=\Omega^{q}(-f^{*}\Gamma)$ are given, for $\tau \neq Id_{X}$, in Proposition \ref{tracesigma}, so we begin by evaluating the sum over those $\tau$. For each such $\tau$ set $Y$ and $\pi:X \to Y$ to be as in that proposition, denote the resulting map from $Y$ to $S$ by $p$ (so that $p\circ\pi=f$), and the divisor $\Sigma$ that we take is $p^{*}\Gamma$. Moreover, the multiplicity from Proposition \ref{tracesigma} depends on $\Sigma$ only via the factor $\delta_{\Sigma,0_{Y}}$, which we can write as $\delta_{\Gamma,0_{S}}$, not involving $\Sigma$ at all. In addition, when $\Gamma=0_{S}$ (i.e., $\Sigma=0_{Y}$) and the positivity condition is not strict, the character $\chi_{q}$ on $G$ here restricts to the character $\chi_{q}$ on $H$ (indeed, both are trivial for $q=1$ and come from the action of $Aut(X)$ when $g_{X}=1$). We may therefore apply Proposition \ref{tracesigma} directly, and then the terms involving $\delta$ combine to $\delta_{\Gamma,0_{S}}\cdot\delta_{(g_{X}-1)(q-1),0}$ times $\frac{1}{n}\sum_{Id_{X}\neq\tau \in G}\chi_{q}(\tau)\chi_{\rho}(\tau^{-1})$. The orthogonality of characters and the known values of $\chi_{q}$ and $\chi_{\rho}$ on $Id_{X}$ shows that the latter sum equals $\delta_{\rho,\chi_{q}}-\frac{d_{\rho}}{n}$, which produces the required product $\delta$ of Kronecker delta symbols, and the extra term $-\frac{d_{\rho}}{n}\delta_{\Gamma,0_{S}}\cdot\delta_{(g_{X}-1)(q-1),0}$.

In the sum of the remaining parts over $Id_{X}\neq\tau \in G$ we change the order of summation, and get $\frac{1}{n}\sum_{P \in X}\sum_{Id_{X}\neq\tau \in G_{P}}\frac{\chi_{P}(\tau)^{q}}{1-\chi_{P}(\tau)}\chi_{\rho}(\tau^{-1})$, where $G_{P}$ stands for the stabilizer of $P$ in $G$ and $\chi_{P}$ is the character from Proposition \ref{tracesigma}. Given $P \in X$, with $\eta=f(P)$, the stabilizer $G_{P}$ is cyclic, generated by an element of $C=\psi(\eta)$ that is determined by $P$. Since we consider only non-trivial elements of that stabilizer, we get contributions here only from branch points (so that $C \neq Id_{X}$ from now on), and the sum over $P$ is essentially finite. Moreover, the proof of Proposition \ref{tracesigma} shows that $\chi_{P}$ is a character of order $o(C)$ of the cyclic group $G_{P}$ of order $o(C)$, so that the numbers $\chi_{P}(\tau)$ over $Id_{X}\neq\tau \in G_{P}$ are all roots of unity of order $o(C)$ that are different from 1. This allows us to apply Lemma \ref{transform} in the opposite direction to the proof of Proposition \ref{tracesigma}, and replace the quotient involving $\chi_{P}(\tau)$ by $\sum_{l=0}^{o(C)-1}\frac{l}{o(C)}\chi_{P}(\tau)^{q-1-l}$.

Now, recall from the proof of Proposition \ref{StoGbr} that the choice of $P \in X$ lying over $\eta \in S$ determines a specific element $\sigma \in G$ that generates $G_{P}$ and lies in $C$. For such $P$ the elements $\tau$ over which we sum are $\tau=\sigma^{\beta}$ for $0\neq\beta\in\mathbb{Z}/o(C)\mathbb{Z}$, and then $\chi_{P}(\tau)=\zeta_{o(C)}^{\beta}$ by definition, and $\chi_{\rho}(\tau^{-1})$ is $\chi_{\rho}(\sigma^{-\beta})$. Replacing $P$ by another pre-image of $\eta$ sends $\sigma$ to a conjugate in $G$, thus leaving also the value $\chi_{\rho}(\sigma^{-\beta})$ invariant. Our expression thus depends only on $C=\psi(\eta)$, and since each of the $r_{C}$ points $\eta \in S$ with $\psi(\eta)=C$ has $\frac{n}{o(C)}$ pre-images in $X$, we may replace $\frac{1}{n}$ times the sum over $P$ by the sum over $C \neq Id_{X}$ with the respective coefficients $\frac{r_{C}}{o(C)}$. As for the value of $\chi_{\rho}(\sigma^{-\beta})$ for $\sigma \in C$ and $\beta\in\mathbb{Z}/o(C)\mathbb{Z}$, we recall the decomposition of the representation space of $\rho$ according to the action of $\sigma$ and evaluate this expression as $\sum_{\alpha\in\mathbb{Z}/o(C)\mathbb{Z}}N_{C,\alpha}^{\rho}\zeta_{o(C)}^{-\alpha\beta}$. Thus, for every $C$, $\alpha$, and $0 \leq l \leq o(C)-1$ we get $\frac{r_{C}}{o(C)}N_{C,\alpha}^{\rho}\frac{l}{o(C)}$ times $\sum_{0\neq\beta\in\mathbb{Z}/o(C)\mathbb{Z}}\zeta_{o(C)}^{\beta(q-1-l)}\zeta_{o(C)}^{-\alpha\beta}$.

Let us evaluate this expression. By adding and subtracting the term with $\beta=0$ and observing that $o(C)>1$ we deduce that the sum over $\beta$ is $o(C)-1$ when $l \equiv q-1-\alpha\big(\mathrm{mod\ }o(C)\big)$ (so that $\frac{l}{o(C)}=\big\{\frac{q-1-\alpha}{o(C)}\big\}$), and $-1$ otherwise. The terms with the fractional parts produce the corresponding required terms. For the remaining contributions we write $\sum_{l=0}^{o(C)-1}\frac{l}{o(C)}=\frac{o(C)-1}{2}$ and recall that $\sum_{\alpha\in\mathbb{Z}/o(C)\mathbb{Z}}N_{C,\alpha}^{\rho}=d_{\rho}$, and deduce that these terms combine to $-\sum_{C \neq Id_{X}}\frac{r_{C}d_{\rho}}{2}\big(1-\frac{1}{o(C)}\big)$.

It remains to evaluate the contribution of the trivial element $Id_{X} \in G$, which equals just $\frac{d_{\rho}}{n}$ times the dimension $i^{q}(-f^{*}\Gamma)$. In order to do this, first observe that when $\rho$ is a character $\chi$, with $d_{\rho}=1$, the number $N_{C,\alpha}^{\chi}$ for a non-trivial class $C$ equals 1 if $\alpha=u_{\chi,C}+o(C)\mathbb{Z}$ and vanishes otherwise. Hence we have proved that the multiplicity $i^{q}_{\chi}(-f^{*}\Gamma)$ with which $\chi$ appears in $\Omega^{q}(-f^{*}\Gamma)$ is \[\frac{1}{n}i^{q}(-f^{*}\Gamma)+\sum_{C \neq Id_{X}}r_{C}\big[\big\{\tfrac{q-1-u_{\chi,C}}{o(C)}\big\}-\tfrac{1}{2}\big(1-\tfrac{1}{o(C)}\big)\big]+\delta-\frac{1}{n}\delta_{\Gamma,0_{S}}\cdot\delta_{(g_{X}-1)(q-1),0}.\] Comparing this with the value for $i^{q}_{\chi}(-f^{*}\Gamma)$ appearing in Proposition \ref{dimchardif} (with the same $\delta$ by Proposition \ref{gX1}), we deduce that $i^{q}(-f^{*}\Gamma)$ is $\delta_{\Gamma,0_{S}}\cdot\delta_{(g_{X}-1)(q-1),0}$ plus the expression $n\deg\Gamma+n(2q-1)\big[(g_{S}-1)+\sum_{C \neq Id_{X}}\tfrac{r_{C}}{2}\big(1-\tfrac{1}{o(C)}\big)\big]$. When we add $\frac{d_{\rho}}{n}$ times this expression to our value for $\frac{1}{n}\sum_{Id_{X}\neq\tau \in G}\chi_{V}(\tau)\chi_{\rho}(\tau^{-1})$ for general $\rho$, the extra term $-\frac{d_{\rho}}{n}\delta_{\Gamma,0_{S}}\cdot\delta_{(g_{X}-1)(q-1),0}$ cancels, and we get the first required term (with $d_{\rho}$). After we expand $d_{\rho}$ back as $\sum_{\alpha\in\mathbb{Z}/o(C)\mathbb{Z}}N_{C,\alpha}^{\rho}$ in the remaining terms $r_{C}d_{\rho}(q-1)\big(1-\frac{1}{o(C)}\big)$ with $C \neq Id_{X}$, we obtain the full asserted expression. This completes the proof of the theorem.
\end{proof}

The proof of Theorem \ref{CWgen} also shows how for 1-dimensional representations $\rho=\chi\in\widehat{G}$ the result indeed agrees with the one of Proposition \ref{dimchardif}. Moreover, the dimension $i^{q}(-f^{*}\Gamma)$ considered in the proof of Theorem \ref{CWgen} can be expressed, by Corollary \ref{Galgen}, simply as $(2q-1)(g_{X}-1)+n\deg\Gamma+\delta_{\Gamma,0_{S}}\cdot\delta_{(g_{X}-1)(q-1),0}$. This result is also easily obtained by tools similar to those appearing in the proof of Proposition \ref{dimchardif} (i.e., applying the Riemann--Roch Theorem for divisors of degree at least $2g_{X}-2$ and evaluating the correction terms). A similar argument allows us to express the multiplicity from Theorem \ref{CWgen} to
\[d_{\rho}\big[g_{S}-1+\tfrac{q-1}{n}(2g_{X}-2)+\deg\Gamma\big]+\sum_{C \neq Id_{X}}r_{C}\sum_{\alpha=0}^{o(C)-1}N_{C,\alpha}^{\rho}\big\{\tfrac{q-1-\alpha}{o(C)}\big\}+\delta.\] We deduce that wherever $\Gamma$ is non-trivial, or the positivity condition is strict, adding another point to $\Gamma$ increases the representation $\Omega^{q}(-f^{*}\Gamma)$ of $G$ by a copy of the regular representation. Note that our result for $i^{q}(-f^{*}\Gamma)$ in case $\Gamma=0_{S}$ is in correspondence with Proposition III.5.2 of \cite{[FK]}, and it covers all the parts of that proposition for which $\Omega^{q}(0_{X})\neq\{0\}$, except for the case with $g_{X}\geq2$ and $q=0$ (but this case is dealt with in the remark following Proposition \ref{r1/Delta}). In particular, for $q=1$ (where $\delta_{(g_{X}-1)(q-1),0}=1$ and $\chi_{q}=\mathbf{1}$) the multiplicity that we obtain reduces to the expression \[d_{\rho}(g_{S}-1+\deg\Gamma)+\delta_{\Gamma,0_{S}}\delta_{\rho,\mathbf{1}}+\sum_{C \neq Id_{X}}r_{C}\sum_{\alpha=0}^{o(C)-1}N_{C,\alpha}^{\rho}\big\{\tfrac{-\alpha}{o(C)}\big\},\] the total dimension is $g_{X}-1+n\deg\Gamma+\delta_{\Gamma,0_{S}}$, and the latter number is just $g_{X}$ for trivial $\Gamma$. This generalizes the considerations appearing in the proofs of Theorems \ref{intnonsp} and \ref{eqnointg-1} for the Abelian case. We also remark that \cite{[VL]} has obtained a result very similar to Theorem \ref{CWgen}, with $q=0$ and arbitrary $g_{X}$, but provided that the degree of $\Gamma$ is large enough. It should be possible to deduce this result, and perhaps more general ones, in the same way that we proved Theorem \ref{CWgen}. We leave these questions for future research.

\section{Jacobian Decompositions \label{JacDecom}}

For a general compact Riemann surface $X$, of genus $g_{X}$, we denote by $J(X)$ its Jacobian. As a complex torus, it is the vector space $T_{0}J(X)=\mathrm{Hom}\big(\Omega(0_{X}),\mathbb{C}\big)$ divided by the lattice corresponding to the integral homology group $H_{1}(X,\mathbb{Z})$. The action of a finite subgroup $G$ of $Aut(X)$ on $J(X)$ expresses itself in the \emph{analytic representation} $\rho_{a}$ on $T_{0}J(X)$, as well as in the \emph{rational representation} $\rho_{r}$ on $H_{1}(X,\mathbb{Z})\otimes\mathbb{Q}=H_{1}(X,\mathbb{Q})$. Therefore $T_{0}J(X)$ decomposes according to $Irr_{\mathbb{C}}(G)$, while for $H_{1}(X,\mathbb{Q})$ we have a (typically coarser) decomposition according to the set $Irr_{\mathbb{Q}}(G)$ of representations of $G$ that are defined over $\mathbb{Q}$ and are irreducible in this sense. Combining these decompositions allows us to write $J(X)$ as the product (up to isogeny) of Abelian varieties associated with elements of $Irr_{\mathbb{Q}}(G)$ (see \cite{[LR]}). The paper \cite{[R]} determines the dimensions of these Abelian varieties, using the analysis of genera of intermediate curves and the invertibility of a certain matrix arising from the character table of $G$. A similar method is used in \cite{[JK]} for obtaining some analogues of Theorem \ref{CWgen}, with $q=0$ but with a large divisor $\Gamma$. Finding these dimensions is equivalent to determining the decomposition of $\rho_{r}$. In this section we show how our results easily imply the decomposition of $\rho_{a}$. As the complexification $\rho_{r}\otimes\mathbb{C}$ of $\rho_{r}$ is isomorphic to the direct sum of $\rho_{a}$ with its complex conjugate representation $\overline{\rho}_{a}$ (see, e.g., \cite{[BL]}), our decomposition also yields a finer decomposition of $\rho_{r}$.

\smallskip

Given $\rho \in Irr_{\mathbb{C}}(G)$, we denote by $\mathbb{K}_{\rho}$ the (cyclotomic) field generated over $\mathbb{Q}$ by the numbers $\chi_{\rho}(\sigma)$ with $\sigma \in G$, with Galois group $\Gamma_{\rho}$ over $\mathbb{Q}$, and set $k_{\rho}$ to be $|\Gamma_{\rho}|=[\mathbb{K}_{\rho}:\mathbb{Q}]$. Then the sum $\sum_{\tau\in\Gamma_{\rho}}(\tau\circ\chi_{\rho})$ takes values in $\mathbb{Q}$. We recall, however, that the character of the element $W \in Irr_{\mathbb{Q}}(G)$ whose complexification $W\otimes\mathbb{C}$ contains $\rho$ is not the aforementioned sum, but its multiple by an integer $m_{\rho}$ called the \emph{Schur index} of $\rho$ (see, e.g., Chapter 12 of \cite{[S]}). The Schur index $m_{\rho}$ is known to divide $d_{\rho}$ for every such $\rho$, so that in particular $m_{\chi}=1$ for $\chi\in\widehat{G}$. Moreover, the numbers $d_{\rho}$, $k_{\rho}$ and $m_{\rho}$ (as well as the group $\Gamma_{\rho}$) depend only on $W \in Irr_{\mathbb{Q}}(G)$ and not on the choice of $\rho \in Irr_{\mathbb{C}}(G)$ contained in $W\otimes\mathbb{C}$, hence they can be denoted with index $W$ instead of $\rho$ (recall though that the dimension of $W$ is not $d_{W}$, but rather the product $d_{W}k_{W}m_{W}$). For a conjugacy class $C \subseteq G$, we recall that $N_{C,0}^{\rho}$ is the dimension of the space of elements of the representation space of $\rho$ that are pointwise fixed by $\rho(\sigma)$ for an element $\sigma \in C$, and this dimension is independent of the choice of $\sigma \in C$.

The next proposition determines, using the latter notation, the number of times each element of $Irr_{\mathbb{C}}(G)$ appears in the analytic representation $\rho_{a}$ of $G$ on $T_{0}J(X)$, as well as the multiplicity of representations from $Irr_{\mathbb{Q}}(G)$ in the rational representation $\rho_{r}$ of $G$. In particular, it reproduces the main technical statement from \cite{[R]}, namely Theorem 5.10 of that reference.
\begin{prop}
The multiplicity with which an element $\rho \in Irr_{\mathbb{C}}(G)$ appears in $\rho_{a}$ is $d_{\rho}(g_{S}-1)+\delta_{\rho,\mathbf{1}}+\sum_{C \neq Id_{X}}r_{C}\sum_{\alpha=0}^{o(C)-1}N_{C,\alpha}^{\rho}\big\{\tfrac{\alpha}{o(C)}\big\}$. In particular, each character $\mathbf{1}\neq\chi\in\widehat{G}$ appears in $\rho_{a}$ precisely $g_{S}+t_{\chi}-1$ times (with $t_{\chi}$ defined in Equation \eqref{uchiCtchi}), while $\mathbf{1}$ appears there with multiplicity $g_{S}$. In $\rho_{r}\otimes\mathbb{C}$ the representation $\rho$ appears $d_{\rho}(2g_{S}-2)+2\delta_{\rho,\mathbf{1}}+\sum_{C \neq Id_{X}}r_{C}(d_{\rho}-N_{C,0}^{\rho})$ times. For characters this number is $2g_{S}-2+\sum_{\{C|\chi(C)\neq1\}}r_{C}$ if $\chi\neq\mathbf{1}$ and $2g_{S}$ for $\chi=\mathbf{1}$. The multiplicity with which an element $W \in Irr_{\mathbb{Q}}(G)$ is contained in $\rho_{r}$ is therefore $\frac{d_{W}}{m_{W}}(2g_{S}-2)+2\delta_{W,\mathbf{1}}+\sum_{C \neq Id_{X}}r_{C}\frac{d_{W}-N_{C,0}^{W}}{m_{W}}$, which for representations $W$ that are sum of Galois conjugate characters becomes $2g_{S}-2+\sum_{C \not\in\ker W}r_{C}$ if these characters are non-trivial and just $2g_{S}$ for $W=\mathbf{1}$. \label{multinJac}
\end{prop}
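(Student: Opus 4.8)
The plan is to extract everything from the Chevalley--Weil formula of Theorem \ref{CWgen}, applied with $q=1$ and trivial divisor $\Gamma$, combined with the two standard facts recalled just before the proposition: that $T_{0}J(X)$ is the $G$-module dual to $\Omega(1)=\Omega^{1}(X)$, and that the complexification of $\rho_{r}$ is isomorphic to $\rho_{a}\oplus\overline{\rho}_{a}$. We may assume $g_{X}\geq1$, since otherwise $J(X)=0$ and there is nothing to prove.

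First I would compute the multiplicity of $\rho\in Irr_{\mathbb{C}}(G)$ in $\rho_{a}$. Since $T_{0}J(X)$ is dual to $\Omega(1)$ as a $G$-module, this multiplicity equals the multiplicity of the contragredient $\rho^{*}\cong\overline{\rho}$ in $\Omega(1)=\Omega^{1}\bigl(\tfrac{1}{f^{*}(\Gamma)}\bigr)$ with $\Gamma$ trivial. Now $q=1$ and trivial $\Gamma$ always satisfy the hypotheses of the second assertion of Proposition \ref{dimchardif}; the parameter $\delta$ from before Proposition \ref{tracesigma} equals $\delta_{\Gamma,1}=1$; and the associated character $\chi_{\delta}$ is $\mathbf{1}$ (by Proposition \ref{dimchardif} when $g_{X}\geq2$, and by Proposition \ref{gX1} when $g_{X}=1$, since $q=1$ is congruent to $1$ modulo every integer). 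Theorem \ref{CWgen} then gives the multiplicity of $\overline{\rho}$ in $\Omega(1)$ as $d_{\overline{\rho}}(g_{S}-1)+\delta_{\overline{\rho},\mathbf{1}}+\sum_{C}r_{C}\sum_{\alpha=0}^{o(C)-1}N_{C,\alpha}^{\overline{\rho}}\bigl\{\tfrac{-\alpha}{o(C)}\bigr\}$. The eigenvalues of $\overline{\rho}(\sigma)$ are the complex conjugates of those of $\rho(\sigma)$, so $N_{C,\alpha}^{\overline{\rho}}=N_{C,-\alpha}^{\rho}$; using this together with $d_{\overline{\rho}}=d_{\rho}$ and $\delta_{\overline{\rho},\mathbf{1}}=\delta_{\rho,\mathbf{1}}$ and reindexing $\alpha\mapsto-\alpha$ in the inner sum turns the expression into $d_{\rho}(g_{S}-1)+\delta_{\rho,\mathbf{1}}+\sum_{C}r_{C}\sum_{\alpha=0}^{o(C)-1}N_{C,\alpha}^{\rho}\bigl\{\tfrac{\alpha}{o(C)}\bigr\}$, the asserted formula. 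For a character $\chi$ one has $N_{C,\alpha}^{\chi}=1$ exactly when $\alpha\equiv u_{\chi,C}$, so the inner double sum collapses to $\sum_{C}r_{C}u_{\chi,C}/o(C)=t_{\chi}$, giving multiplicity $g_{S}-1+\delta_{\chi,\mathbf{1}}+t_{\chi}$, which is $g_{S}$ when $\chi=\mathbf{1}$ (as $t_{\mathbf{1}}=0$) and $g_{S}+t_{\chi}-1$ otherwise.

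Next I would treat $\rho_{r}$. As the complexification of $\rho_{r}$ is $\rho_{a}\oplus\overline{\rho}_{a}$, the multiplicity of $\rho$ in it is the sum of the multiplicity of $\rho$ in $\rho_{a}$ and of the multiplicity of $\overline{\rho}$ in $\rho_{a}$; adding the two instances of the formula just obtained and noting that $\bigl\{\tfrac{\alpha}{o(C)}\bigr\}+\bigl\{\tfrac{-\alpha}{o(C)}\bigr\}$ equals $0$ when $o(C)\mid\alpha$ and $1$ otherwise, so that $\sum_{\alpha}N_{C,\alpha}^{\rho}\bigl(\bigl\{\tfrac{\alpha}{o(C)}\bigr\}+\bigl\{\tfrac{-\alpha}{o(C)}\bigr\}\bigr)=d_{\rho}-N_{C,0}^{\rho}$, yields $d_{\rho}(2g_{S}-2)+2\delta_{\rho,\mathbf{1}}+\sum_{C}r_{C}(d_{\rho}-N_{C,0}^{\rho})$. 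For a character $\chi$ the quantity $1-N_{C,0}^{\chi}$ is $1$ precisely when $\chi(C)\neq1$, giving $2g_{S}-2+2\delta_{\chi,\mathbf{1}}+\sum_{\{C\mid\chi(C)\neq1\}}r_{C}$, hence $2g_{S}$ for $\chi=\mathbf{1}$ and $2g_{S}-2+\sum_{\{C\mid\chi(C)\neq1\}}r_{C}$ otherwise.

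Finally, to pass from $Irr_{\mathbb{C}}(G)$ to $Irr_{\mathbb{Q}}(G)$: if $W\in Irr_{\mathbb{Q}}(G)$ has complexification containing $\rho$, then $W\otimes_{\mathbb{Q}}\mathbb{C}\cong m_{W}\bigoplus_{\tau\in\Gamma_{\rho}}\rho^{\tau}$, where the $\rho^{\tau}$ are the $k_{W}$ distinct Galois conjugates of $\rho$. Because $\rho_{r}$ is defined over $\mathbb{Q}$ its character is $\mathbb{Q}$-valued, so all the conjugates $\rho^{\tau}$ occur in $\rho_{r}\otimes\mathbb{C}$ with the same multiplicity; comparing the coefficient of $\rho$ on the two sides of $\rho_{r}\otimes\mathbb{C}=\bigoplus_{W}(\text{mult. of }W\text{ in }\rho_{r})\cdot(W\otimes\mathbb{C})$ then shows that the multiplicity of $W$ in $\rho_{r}$ is $\tfrac{1}{m_{W}}$ times the multiplicity of $\rho$ in $\rho_{r}\otimes\mathbb{C}$. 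Substituting the formula from the previous step and using $d_{\rho}=d_{W}$, $N_{C,0}^{\rho}=N_{C,0}^{W}$, and $\delta_{\rho,\mathbf{1}}=\delta_{W,\mathbf{1}}$ (with $m_{\mathbf{1}}=1$) gives $\tfrac{d_{W}}{m_{W}}(2g_{S}-2)+2\delta_{W,\mathbf{1}}+\sum_{C}r_{C}\tfrac{d_{W}-N_{C,0}^{W}}{m_{W}}$; for $W$ a sum of conjugate characters one has $m_{W}=1$, $d_{W}=1$, and $\tfrac{d_{W}-N_{C,0}^{W}}{m_{W}}=1$ exactly when $C\notin\ker W$, which produces the stated values. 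I expect the main point requiring care to be the bookkeeping of this last step --- making sure $\mathbb{Q}$-rationality of $\rho_{r}$ forces equal multiplicities of all Galois conjugates and that the Schur index enters exactly once --- together with verifying that the exceptional parameters $\delta$ and $\chi_{\delta}$ of Theorem \ref{CWgen} specialize as claimed for $q=1$, including the degenerate-looking case $g_{X}=1$.
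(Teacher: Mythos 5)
Your proof is correct and follows essentially the same route as the paper's: identify the multiplicity of $\rho$ in $\rho_{a}$ with that of $\rho^{*}$ in $\Omega(1)$, specialize Theorem \ref{CWgen} to $q=1$ and trivial $\Gamma$ (with $\delta=1$ and $\chi_{\delta}=\mathbf{1}$), use $N_{C,\alpha}^{\rho^{*}}=N_{C,-\alpha}^{\rho}$ and the identity $\{\tfrac{\alpha}{o(C)}\}+\{\tfrac{-\alpha}{o(C)}\}=1-\delta_{\alpha,0}$ for the rational representation, and pass to $Irr_{\mathbb{Q}}(G)$ via Galois orbits and the Schur index. The specializations to characters and to sums of conjugate characters also match the paper's computation.
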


\begin{proof}
Since the operation of taking the dual commutes with finite direct sums of representations, the multiplicity to which $\rho$ appears in the action of $G$ on $T_{0}J(X)$ via $\rho_{a}$ is the multiplicity to which the representation $\rho^{*}$ dual to $\rho$ appears in the representation $\Omega(0_{X})$. Setting $q=1$ and $\Gamma=0_{S}$ in Theorem \ref{CWgen} (i.e., the case $q=1$ of the classical formula of Chevalley and Weil), and recalling that $\chi_{\rho^{*}}(\sigma)=\chi_{\rho}(\sigma^{-1})$ for each $\sigma \in G$ hence $N_{C,\alpha}^{\rho^{*}}=N_{C,-\alpha}^{\rho}$ for $\alpha\in\mathbb{Z}/o(C)\mathbb{Z}$, we obtain the value appearing in the first assertion. We have seen that if $\rho$ is a character $\chi\in\widehat{G}$ then $N_{C,\alpha}^{\rho}$ equals 1 for $\alpha=u_{\chi,C}+o(C)\mathbb{Z}$ and 0 for any other value of $\alpha$, which combines with the value of $t_{\chi}$ from Equation \eqref{uchiCtchi} to produce the assertion about characters in $\rho_{a}$.

For $\rho_{r}\otimes\mathbb{C}$ we add the multiplicity of $\rho$ in $\overline{\rho}_{a}=\Omega(0_{X})$ itself, from which the required assertion follows since $\big\{\frac{\alpha}{o(C)}\}+\big\{\frac{-\alpha}{o(C)}\}$ equals 0 if $\alpha=0\in\mathbb{Z}/o(C)\mathbb{Z}$ and 1 otherwise. For characters $\chi\in\widehat{G}$ it is clear that $N_{C,0}^{\chi}$ is 1 if $\chi(C)=1$ (i.e., if $C\subseteq\ker\chi$) and 0 otherwise, which establishes the assertion about characters in $\rho_{r}\otimes\mathbb{C}$ as well (equivalently, it follows from the results about characters in $\rho_{a}$ via Lemma \ref{tuchichibar}). The multiplicity of $\rho \in Irr_{\mathbb{C}}(G)$ (and in particular $\chi\in\widehat{G}$) in $\rho_{r}\otimes\mathbb{C}$ is thus invariant under the action of $\Gamma_{\rho}$ (the invariance of $N_{C,0}^{\rho}$ is easy as well), so that the direct sum over the Galois orbit of $\rho$ appears together with this multiplicity. The relation between $W$ and this direct sum thus implies the remaining assertions, using the observation that $m_{W}=1$ if $W$ is the direct sum of characters from $\widehat{G}$ (i.e., when $d_{W}=1$). This proves the proposition.
\end{proof}

Proposition \ref{multinJac} allows us to reproduce the basic assertions from Sections 1 and 2 of \cite{[LR]}, together with the main result (Theorem 5.12) of \cite{[R]}. We present these results here since we interpret them using generalized Prym varieties of cyclic covers of $S$ in Theorem \ref{PryminJac} below. We shall denote the ring of endomorphisms of an Abelian variety $A$ by $\mathrm{End}(A)$, and the algebra $\mathrm{End}(A)\otimes\mathbb{Q}$ by $\mathrm{End}_{0}(A)$. Note that the action of $G$ on $X$ (hence on $J(X)$) embeds the group ring $\mathbb{Z}[G]$ of $G$ over $\mathbb{Z}$ into $\mathrm{End}\big(J(X)\big)$, hence the group algebra $\mathbb{Q}[G]$ of $G$ over $\mathbb{Q}$ into $\mathrm{End}_{0}\big(J(X)\big)$.
\begin{prop}
The Jacobian $J(X)$ decomposes, up to isogeny, into the product over $W \in Irr_{\mathbb{Q}}(C)$ of Abelian varieties, where the Abelian variety $A_{W}$ associated with $W$ has dimension
$k_{W}d_{W}^{2}(g_{S}-1)+\delta_{W,\mathbf{1}}+\sum_{C \neq Id_{X}}\frac{k_{W}d_{W}}{2}r_{C}(d_{W}-N_{C,0}^{W})$. Moreover, $A_{W}$ is isogenous to the $\frac{d_{W}}{m_{W}}$th power of an Abelian variety $B_{W}$, the dimension of which is
$k_{W}d_{W}m_{W}(g_{S}-1)+\delta_{W,\mathbf{1}}+\sum_{C \neq Id_{X}}\frac{k_{W}m_{W}}{2}r_{C}(d_{W}-N_{C,0}^{W})$, and whose algebra of endomorphisms contains a division algebra of dimension $m_{W}^{2}$ over $\mathbb{K}_{W}$. The Abelian varieties $A_{W}$ are canonical as subvarieties of $J(X)$, while the $B_{W}$s are typically not. \label{Jacdecom}
\end{prop}

\begin{proof}
The $W$-isotypical sub-representation of $\rho_{r}$ and the direct sum over $\tau\in\Gamma_{\rho}$ of the $(\tau\circ\rho)$-isotypical sub-representations of $\rho_{a}$ (where $W\otimes\mathbb{C}$ is isomorphic to the direct sum over $\tau\in\Gamma_{\rho}$ of $m_{W}$ copies of $\tau\circ\rho$) are the images of $H_{1}(X,\mathbb{Q})$ (resp. of $T_{0}J(X)$) under the projector $p_{W}$ associated with $W$ in $\mathbb{Q}[G]\subseteq\mathrm{End}_{0}\big(J(X)\big)$. As an integral multiple of $p_{W}$ lies in $\mathbb{Z}[G]\subseteq\mathrm{End}\big(J(X)\big)$, the image $A_{W}$ of $J(X)$ under such an element is a well-defined canonical Abelian subvariety, which has $p_{W}\big(T_{0}J(X)\big)$ as its tangent space and $p_{W}\big(H_{1}(X,\mathbb{Q})\big)$ as the rational vector space arising from its lattice. Since the dimension of an Abelian variety is half the rank of its lattice, and the dimension over $\mathbb{Q}$ of the $W$-isotypical part of $\rho_{r}$ is $\dim W=d_{W}k_{W}m_{W}$ times the multiplicity from Proposition \ref{multinJac}, the formula for $\dim A_{W}$ is also established. The fact that this is a decomposition up to isogeny follows immediately from the construction (since the sum over $W \in Irr_{\mathbb{Q}}(G)$ of the projectors $p_{W}$ is $Id_{J(X)}$), and the first assertion is proved.

For the second assertion we observe that $\mathrm{End}_{0}(A_{W})=p_{W}\mathrm{End}_{0}\big(J(X)\big)$, and we recall from Section 12 of \cite{[S]} that the ideal $p_{W}\mathbb{Q}[G]$ of $\mathbb{Q}[G]\subseteq\mathrm{End}_{0}\big(J(X)\big)$ (which is a subring of $\mathrm{End}_{0}(A_{W})$) is isomorphic to a central simple algebra of the sort $\mathrm{M}_{d_{W}/m_{W}}(D_{W})$. Here $D_{W}$ is a division algebra of dimension $m_{W}^{2}$ over $\mathbb{K}_{W}$ and $\mathrm{M}_{l}(R)$ is the ring of $l \times l$ matrices over the ring $R$. Since the unit of $\mathrm{M}_{l}(R)$ can be presented as the sum of $l$ conjugate idempotents, an Abelian variety $A$ such that $\mathrm{M}_{l}(R)\subseteq\mathrm{End}_{0}(A)$ must be isogenous to the $l$th self-product of a subvariety $B$ with $R\subseteq\mathrm{End}_{0}(B)$, and $\dim A=l\cdot\dim B$ (but $B$, as a subvariety, depends on the choice of a matrix idempotent). Applying this to $A=A_{W}$ with $l=\frac{d_{W}}{m_{W}}$ establishes the second assertion (as well as the non-canonicity of $B_{W}$ unless $d_{W}=m_{W}$ and $B_{W}=A_{W}$). This proves the proposition.
\end{proof}

The positivity of the dimensions of the subvarieties $A_{W}$ and $B_{W}$ for all $W \in Irr_{\mathbb{Q}}(G)$ when $g_{S}\geq2$, which is Theorem 3.1 of \cite{[LR]}, is an immediate consequence of Proposition \ref{Jacdecom} (or equivalently of Theorem 5.12 of \cite{[R]}).

\smallskip

An equivalent way of formulating the part of the results of Proposition \ref{Jacdecom} concerning characters of $G$ is as follows. As characters of $G$ map $G$ onto finite (hence cyclic) subgroups of $S^{1}$, each such character becomes a faithful character of a cyclic quotient $Q=G/N$ of $G$. Moreover, if $\chi\in\widehat{G}$ is an embedding of $Q$ into $S^{1}$ then the other elements of $\widehat{G}$ that are embeddings of the same quotient $Q$ are $\chi$ raised to powers that are co-prime to $|Q|$, and they are also precisely the images of $\chi$ under the Galois group $\Gamma_{\chi}$. It follows that the cyclic quotients of $G$ are in one-to-one correspondence with representations $W \in Irr_{\mathbb{Q}}(G)$ with $d_{W}=1$ (hence also $m_{W}=1$), and hence if $W$ is associated with $Q$ in this way then $k_{W}=\varphi(|Q|)$, where $\varphi$ is Euler's totient function. Now, since $d_{W}=m_{W}=1$, the subvariety $B_{W}$ from Proposition \ref{Jacdecom} is a canonical subvariety of $J(X)$ (since it equals $A_{W}$), and we may denote it also by $B_{Q}$ for the cyclic subgroup $Q$ associated with $W$. For the dimension of $B_{Q}$ we recall that $d_{W}=1$, while $N_{C,0}^{W}$ equals 1 as well if and only if the elements of $C$ operate trivially via any character $\chi\in\widehat{G} \subseteq Irr_{\mathbb{C}}(G)$ that it contained in $W\otimes\mathbb{C}$. But this means that $C$ is contained in the kernel $N$ of the projection $G \to Q$. Since $W=\mathbf{1}$ if and only if $Q$ if the trivial quotient $G/G$, Proposition \ref{Jacdecom} shows that the dimension of $B_{Q}$ is $\varphi(|Q|)\big[g_{S}-1+\delta_{Q,G/G}+\sum_{C \not\subseteq N}\frac{r_{C}}{2}\big]$. Finally, since $\prod_{W \in Irr_{\mathbb{Q}}(G)}A_{W} \to J(X)$ is an isogeny, the map $\prod_{Q=G/N\ \mathrm{cyclic}}B_{Q} \to J(X)$ has finite kernel, and as we have $Irr_{\mathbb{C}}(G)=\widehat{G}$ for Abelian $G$, the latter map is surjective (hence an isogeny) in this case. We gather these results in the following proposition.
\begin{prop}
To every cyclic quotient $Q=G/N$ of $G$ corresponds a canonical subvariety $B_{Q}$ of $J(X)$, of dimension $\varphi(|Q|)\big[g_{S}-1+\delta_{Q,G/G}+\sum_{C \not\subseteq N}\frac{r_{C}}{2}\big]$. The map $\prod_{Q=G/N\ \mathrm{cyclic}}B_{Q} \to J(X)$ has finite kernel, and in case $G$ is Abelian it is a decomposition of $J(X)$ up to isogeny. \label{Jaccycquot}
\end{prop}

The subvariety $B_{G/G}$ from Proposition \ref{Jaccycquot} (or equivalently $B_{\mathbf{1}}=A_{\mathbf{1}}$ in the terminology of Proposition \ref{Jacdecom}), of dimension $g_{S}$, is just the $f^{*}$-image of the Jacobian $J(S)$ in $J(X)$. Its natural complement (e.g., with respect to the pairing defined by the polarization) is called, in \cite{[LR]} and others, the \emph{Prym variety} $P(X/S)$ of the cover $f:X \to S$. More generally, a Galois map $f:X \to S$ with Galois group $G$ admits an intermediate Riemann surface $Y_{H}$ for every subgroup $H$ of $G$, and we define the complement of the sum of the images of the Jacobians $J(Y_{H})$ for proper subgroups $H$ of $G$ to be the \emph{primitive Prym variety} $\widetilde{P}(X/S)$ of $X$ over $S$. In particular, while $P(S/S)$ is always a trivial variety, the primitive Prym variety $\widetilde{P}(S/S)$ is the Jacobian $J(S)$ itself (since the trivial Galois group has no proper subgroups). This concept might be complicated for general Galois covers, but we shall use it here only for \emph{cyclic} covers.

Proposition \ref{Jaccycquot} now has the following interpretation.
\begin{thm}
For a cyclic quotient $Q=G/N$ of $G$, let $Y_{Q}$ be the Riemann surface associated with the action of $N$ on $X$, so that $Y_{Q}$ is a cyclic cover of $S$ with Galois group $Q$. Considering the primitive Prym varieties $\widetilde{P}(Y_{Q}/S)$, the map $\prod_{Q=G/N\ \mathrm{cyclic}}\widetilde{P}(Y_{Q}/S) \to J(X)$ has finite kernel, and when $G$ is Abelian this map is an isogeny. Moreover, $\widetilde{P}(Y_{Q}/S)$ is non-trivial wherever $g_{Y_{Q}}\geq1$, except when $g_{Y_{Q}}=g_{S}=1$ and $Q \neq G/G$. \label{PryminJac}
\end{thm}

\begin{proof}
We first recall from Proposition \ref{Jacdecom} that the map $f^{*}$ from $J(S)$ onto its image $B_{\mathbf{1}}=A_{\mathbf{1}}$ in $J(X)$ is an isogeny (since both have the same dimension $g_{S}$). Therefore $J(Y_{Q})$ is isogenous to its image in $J(X)$ as well (as $X$ is a Galois cover of $Y_{Q}$), and hence the same assertion holds for its subvarieties, in particular the primitive Prym variety $\widetilde{P}(Y_{Q}/S)$. It thus suffices to investigate $J(Y_{Q})$ as a representation of $Q$. As $Q$ is cyclic, we deduce from Proposition \ref{Jacdecom} that precisely one element of $Irr_{\mathbb{Q}}(Q)$ is a faithful representation of $Q$, while the other elements have non-trivial kernels. Specifically, as a representation of $G$, the faithful representation in $Irr_{\mathbb{Q}}(Q)$ is the representation $W \in Irr_{\mathbb{Q}}(G)$ with $d_{W}=1$ that corresponds to $Q$, and the other representations in $Irr_{\mathbb{Q}}(Q)$ come from proper quotients of $Q$. It follows that the image of $\widetilde{P}(Y_{Q}/S) \subseteq J(Y_{Q})$ in $J(X)$ is the subvariety denoted by $B_{W}$ in Proposition \ref{Jacdecom} and by $B_{Q}$ in Proposition \ref{Jaccycquot}. The other parts of $J(Y_{Q})$ (or its image in $J(X)$) are, via the same argument, images of primitive Prym varieties of coarser quotients (either in $J(Y_{Q})$ or in $J(X)$). The first two required assertions therefore follow from Proposition \ref{Jaccycquot}.

For the third one we just have to verify the positivity of the associated dimension, which we can write as in Proposition \ref{Jaccycquot} for the cover $Y_{Q} \to S$. For the trivial quotient $G/G$ this is immediate: The dimension of $f^{*}J(S)$ is just $g_{S}$. On the other hand, when $Q$ is non-trivial and we consider the faithful representation the condition $C \not\subseteq N$ is redundant, and the formula for the dimension of $\widetilde{P}(Y_{Q}/S) \subseteq J(Y_{Q})$ becomes just $\varphi(|Q|)\big(g_{S}-1+\sum_{Id_{Y_{Q}} \neq y \in Q}\frac{r_{y}}{2}\big)$ (the sum taken on non-trivial elements $y \in Q$). But as Corollary \ref{Galgen} for this cover produces the equality $\frac{g_{Y_{Q}}-1}{|Q|}=g_{S}-1+\sum_{Id_{Y_{Q}} \neq y \in Q}\frac{r_{y}}{2}\big(1-\frac{1}{o(y)}\big)$, we can write this dimension as $\frac{\varphi(|Q|)}{|Q|}\big(g_{Y_{Q}}-1+\sum_{y}\frac{|Q|r_{y}}{2o(y)}\big)$. For $g_{Y_{Q}}\geq1$ this number is positive, unless $g_{Y_{Q}}=1$ and there is no branching (since any branching has a positive contribution to the sum over $y$), which by Corollary \ref{Galgen} is equivalent to $g_{S}$ being 1 as well. This proves the theorem.
\end{proof}

It is easy to see that in the cases not covered by the last assertion in Theorem \ref{PryminJac} the primitive Prym variety $\widetilde{P}(Y_{Q}/S)$ is trivial. Indeed, if $g_{Y_{Q}}=0$ then $J(Y_{Q})$ itself is trivial, and if $Y_{Q}$ is a non-trivial unbranched cover of $S$ and both $g_{Y_{Q}}$ and $g_{S}$ equal 1 then $J(Y_{Q})$ is the image of $J(S)$ there and $\widetilde{P}(Y_{Q}/S)$ is again trivial. This completes the analysis of non-triviality of primitive Prym varieties in cyclic covers. While we shall not investigate primitive Prym varieties for arbitrary covers, Theorem \ref{PryminJac} has the following immediate consequence regarding primitive Prym varieties for Abelian covers that are not cyclic.
\begin{cor}
Let $f:X \to S$ be an Abelian cover, and assume that its (Abelian) Galois group $G$ is not cyclic. Then $\widetilde{P}(X/S)$ is trivial. \label{Prymabnocyc}
\end{cor}

\begin{proof}
Theorem \ref{PryminJac} implies, for Abelian $G$, that $J(X)$ is covered by the images of Jacobians of its quotients that cover $S$ with cyclic Galois groups. Since $G$ is not cyclic, all of these Jacobians arise from proper intermediate Riemann surfaces. Hence $\widetilde{P}(X/S)$ is the the complement of all of $J(X)$ by definition, implying its triviality. This proves the corollary.
\end{proof}
Corollary \ref{Prymabnocyc} was so simple since the full structure of $J(X)$ was given in Theorem \ref{PryminJac} in the Abelian case. For non-Abelian groups one must examine how the intermediate Riemann surfaces (in particular those that are not Galois over $S$) interact with the subvarieties $A_{W}$ and $B_{W}$ for $W \in Irr_{\mathbb{Q}}(G)$ from Proposition \ref{Jacdecom}, specifically with those for which $d_{W}>1$. We leave this question for further investigation.

\smallskip

We conclude with a remark about expressions using explicit equations. This is motivated, in the context of \cite{[KZ]} and others, by the fact, following from Theorem \ref{PryminJac}, that the Jacobian $J(X)$ of a Galois cover $X$ of $\mathbb{P}^{1}(\mathbb{C})$ has a part that is isogenous to the product of primitive Prym varieties of $Z_{m}$ curves (and $J(X)$ decomposes as the product of such primitive Prym varieties when the cover is Abelian). In general, the cyclicity of $Q$ implies that $\mathbb{C}(Y_{Q})$ is generated over $\mathbb{C}(S)$ by an element $w_{Q}\in\mathbb{C}(X)$ such that $w_{Q}^{|Q|}=F_{Q}\in\mathbb{C}(S)$, and the equation $w_{Q}^{|Q|}=F_{Q}$ is irreducible. Take a character $\chi\in\widehat{G}$ that represents $Q$ faithfully, and then any non-zero element of $\mathbb{C}(X)_{\chi}$ would produce such an equation. For a more explicit description, assume that $G$ is Abelian, expressed as a direct product of cyclic groups as in Propositions \ref{psiZn} and \ref{charZn}. Then if $\chi$ corresponds to the sequence $E=(e_{l})_{l=1}^{q}\in\prod_{l=1}^{q}(\mathbb{Z}/m_{l}\mathbb{Z})$ as in the latter proposition, we may take $w^{E}$ to be our $w_{Q}$. The associated number $\beta$ from Section \ref{GalCov} is $o(\chi)$, or equivalently $|Q|$, and the resulting equation producing the cyclic cover $Y_{Q}$ of $S$ whose primitive Prym variety is $B_{Q}$, or $B_{W}$ for the appropriate $W \in Irr_{\mathbb{Q}}(G)$, is $(w^{E})^{|Q|}=\prod_{l=1}^{q}F_{l}^{e_{l}|Q|/m_{l}}$ (and this equation is non-degenerate). In particular, when $S=\mathbb{P}^{1}(\mathbb{C})$ the Riemann surface $Y_{Q}$ is a $z_{|Q|}$ curve, and the latter equation is a defining $z_{|Q|}$ equation.

\noindent\textsc{Finance Department School of Business 2100, Hillside University of Connecticut, Storrs, CT 06268 \\ Einstein Institute of Mathematics, the Hebrew University of Jerusalem, Edmund Safra Campus, Jerusalem 91904, Israel}

\noindent E-mail address: zemels@math.huji.ac.il, yaacov.kopeliovich@uconn.edu


\begin{thebibliography}{10}

\bibitem[BL]{[BL]} Birkenhake, C., Lange, H., \textsc{Complex Abelian Varieties} Grundlehren der mathematischen Wissenschaften, Springer--Verlag, Berlin Heidelberg, xi+638pp (2004).
\bibitem[CW]{[CW]} Chevalley, C., Weil, A., \textsc{\"{U}ber das Verhalten der Integrale 1. Gattung bei Automorphismen des Funktionenk\"{o}rpers}, Abh. Hamburger Math. Sem., vol 10 issue 1, 358--361 (1934).
\bibitem[EL]{[EL]} Ellingsrud, G., L\o nsted, K., \textsc{An Equivariant Lefshetz Formula for Finite Reductive Groups}, Math. Ann., vol 251, 253--261 (1980).
\bibitem[FK]{[FK]} Farkas, H. M., Kra, I., \textsc{Riemann Surfaces}, Graduate Text in Mathematics 71, Springer--Verlag, 354pp (1980).
\bibitem[FZ]{[FZ]} Farkas, H. M., Zemel, S., \textsc{Generalizations of Thomae's Formula for $Z_{n}$ curves}, DEVM 21, Springer--Verlag, xi+354pp (2011).
\bibitem[JK]{[JK]} Joyner, D., Ksir, A., \textsc{Decomposing Representations of Finite Groups on Riemann--Roch Spaces}, Proc. Amer. Math. Soc., vol 135 no 11, 3465--3476 (2007).
\bibitem[K1]{[K1]} Kopeliovich, Y., \textsc{Thomae Formula for General Cyclic Covers of $\mathbb{CP}^{1}$}, Lett. Math. Phys. 94 issue 3, 313--333 (2010).
\bibitem[K2]{[K2]} Kopeliovich, Y., \textsc{Thomae Formula for 2-Abelian Covers of $\mathbb{CP}^{1}$}, pre-print, https://arxiv.org/abs/1605.01139.
\bibitem[KZ]{[KZ]} Kopeliovich, Y., Zemel, S. \textsc{Thomae Formula for Abelian Covers of $\mathbb{CP}^{1}$}, to appear in Trans. Amer. Math. Soc., https://arxiv.org/abs/1612.09104.
\bibitem[LR]{[LR]} Lange, H., Recillas, S., \textsc{Abelian Varieties with Group Action}, J. Reine Angew. Math., vol 575, 135--155 (2004).
\bibitem[R]{[R]} Rojas, A. M., \textsc{Group Actions on Jacobian Varieties}, Rev. Mat. Iberoamericana, vol 23 no. 2, 397--420 (2007).
\bibitem[S]{[S]} Serre, J. P., \textsc{Linear Representations of Finite Groups}, Graduate Texts in Mathematics, vol 42, x+172pp (1977).
\bibitem[W]{[W]} Weil, A., \textsc{\"{U}ber Matrizenringe auf Riemannschen Fl\"{a}chen und den Riemann--Rochschen Satz}, Abh. Hamburger Math. Sem., vol 11 issue 1, 110--115 (1935).
\bibitem[V]{[V]} V\"{o}lklein, H., \textsc{Groups as Galois Groups: An Introduction}. Cambridge Studies in Advanced Mathematics 53, Cambridge University Press, Cambridge, xviii+248pp (1996).
\bibitem[VL]{[VL]} V\'{a}squez Latorre, D., \textsc{Acci\'{o}nes de Grupos sobre el espacio de Riemann-Roch}, Ph.D. Thesis, Universidad de Chile, 111pp (2013).
\bibitem[Z]{[Z]} Zemel, S. \textsc{Thomae Formulae for General Fully Ramified $Z_{n}$ Curves}, J. Anal. Math., vol 131, 101--158 (2017).

\end{thebibliography}
\end{document}